\renewcommand{\epsilon}{\varepsilon}
\renewcommand*{\Re}{\ensuremath{\mathrm{Re}~}}
\renewcommand*{\Im}{\ensuremath{\mathrm{Im}~}}
\renewcommand*{\tilde}{\widetilde}
\newcommand{\dd}{\ensuremath{\;\mathrm{d}}}
\newcommand{\loc}{\ensuremath{\mathrm{loc}}}
\newcommand{\tx}{\ensuremath{\underline{x}}}
\newcommand{\JJ}{\ensuremath{\mathcal{J}}}
\DeclareMathOperator{\supp}{supp}
\DeclareMathOperator{\divv}{div}
\DeclareMathOperator{\curl}{curl}
\newcommand\ii{\ensuremath{\mathrm i}}
\newcommand\N{\ensuremath{\mathbb N}}
\newcommand\Z{\ensuremath{\mathbb Z}}
\newcommand{\R}{\ensuremath{\mathbb R}}
\newcommand{\CC}{\ensuremath{\mathbb C}}
\renewcommand*{\hat}{\widehat}
\newtheorem{theorem}{Theorem}
\newtheorem{lemma}[theorem]{Lemma}
\newtheorem{assumption}{Assumption}
\newtheorem{problem}{Problem}
\newtheorem{remark}[theorem]{Remark}
\newtheorem{proposition}[theorem]{Proposition}
\title{Electromagnetic wave scattering from locally perturbed periodic inhomogeneous layers}
\author{
    Alexander Konschin
    \footnotemark[1]
}
\date{\today} 
\begin{document}
    \maketitle
    \footnotetext[1]{Center for Industrial Mathematics, University of Bremen, Germany; \texttt{alexk@uni-bremen.de}}

    \begin{abstract}
        We consider the scattering problem on locally perturbed periodic penetrable dielectric layers, which is formulated in terms of the full vector-valued time-harmonic Maxwell's equations. The right-hand side is not assumed to be periodic.
        At first, we derive a variational formulation for the electromagnetic scattering problem in a suitable Sobolev space on an unbounded domain and reformulate the problem into a family of bounded domain problems using the Bloch-Floquet transform. For this family we can show the unique existence of the solution by applying a carefully designed Helmholtz decomposition. Afterwards, we split the differential operator into a coercive part and a compact perturbation and apply the Fredholm theory. Having that, the Sherman-Morrison-Woodbury formula allows to construct the solution of the whole problem handling the singularities of the Calderon operator on the boundary. 
        Moreover, we show some regularity results of the Bloch-Floquet transformed solution w.r.t.\ the quasi-periodicity.
    \end{abstract}
     
    \section{Introduction}
        This study of the electromagnetic scattering problem for locally perturbed periodic layers modeled by the full vector-valued Maxwell's equations is motivated by the growing industrial interest for nano-structured materials and the resulting challenge to construct an automated non-destructive testing method for these kinds of structures.
        Assuming a periodic layer and a quasi-periodic incident field, the problem can be reduced to one periodic cell, which is analyzed in \cite{Bao2000,Dobso1994,Schmi2003} in the case of a constant permeability and in \cite{AmmariBao2003} for a chiral periodic media. 
        The acoustic scattering problem from unbounded periodic structures with a quasi-periodic incident field is a well-established topic in mathematics and was analyzed in various articles (see, e.g., \cite{Abboud1992,Bao1994,Bao1995,BaoDobsomCox95,Bonne1994,Dobson1992,Kirsc1993,Kirsc1995a}).
        If the incident field or the media do not satisfy the periodicity condition, then the problem is usually treated as a rough layer scattering problem where some restrictive assumptions for the permittivity and permeability are prescribed in the literature (see \cite{Hadda2011,LiZhengZheng2016} for the vector-valued problem and \cite{HuLiuQuZhang2015,Lechl2009} for the acoustic scattering problem). Alternatively, the wave number is assumed to be complex valued (see, e.g.,  \cite{LiWuZheng2011}) in which case the sesquilinear form is coercive and the problem is much easier. For sound-soft rough surfaces, the unique existence of the solution is well-known (see \cite{Chand2010} and \cite{Chand2005}). 
        
        In this article, we consider the rough scattering problem in the case that the permittivity $\varepsilon$ and that the permeability $\mu$ are periodic functions in the first two components and the permittivity includes a local defect. The right-hand side can be chosen arbitrarily without any periodicity restrictions. 
        Our Bloch-Floquet transform approach, which is motivated by the results of \cite{Konschin19a} for the Helmholtz equation, allows to show the unique existence of the solution to the vector-valued scattering problem with much less restrictive regularity assumptions on the parameter.  It is enough to assume that the permittivity $\varepsilon$ and the permeability $\mu$ are Lipschitz-continuous functions and some condition for the uniqueness, e.g., that the set $\{\Im \varepsilon > 0\}$ should contain an open ball to avoid surface waves. 
        
        The Bloch-Floquet approach was used in existing work to analyze the acoustic scattering problem in waveguides (compare \cite{Ehrhardt2009,Fliss2015,Joly2006}) and in open space (see \cite{Konschin19a}).
        In \cite{LechleiterZhang2017b} a first approach for the vector-valued problem for a periodic permittivity, a constant permeability and for a non-periodic right-hand side was studied. Because of the choice of a constant permeability the solution to the corresponding variational problem of the Maxwell equations is $H^1$-regular and the boundary condition is well-behaved. In our case we  consider the permeability as a function and
        seek  a solution in the $H(\curl)$ space. In this case the radiation condition has to be adjusted and includes singularities in the frequency domain. This makes the analysis much more involved.

        For the existence theory, we  apply the Bloch-Floquet transform to derive a family of quasi-periodic variational problems in a bounded domain. As the next step, we  divide the quasi-periodic solution space into the sum of three subspaces by constructing two suitable Helmholtz decompositions, such that the problem reduces to finding a solution in a more regular subspace.
        In this quasi-periodic  subspace, we can split the sesquilinear form into a coercive part and a compact perturbation and apply the Fredholm theory to conclude the solvability of the family of problems by showing the uniqueness. Having this, we  construct a solution to the actual problem by showing and applying the so-called Sherman-Morrison-Woodbury formula to handle the singularities.
        Furthermore, the formula allows to show the regularity of the Bloch-Floquet transformed solution with respect to the quasi-periodicity and describes a natural way to approximate the solution numerically.
        
        This paper is organized as follows. In section \ref{Sec2} we  introduce the scattering problem and in section \ref{Sec3} we derive the corresponding variational formulation using the Calderon operator for the upper boundary condition. Hereafter, we prove the unique existence of the solution in section \ref{Sec4}, if the permittivity is not perturbed, and in section \ref{Sec5} we prove the unique existence of the solution for the locally perturbed permittivity. Section \ref{Sec6} contains regularity results of the Bloch-Floquet transformed solution.

    \section{Scattering problem}\label{Sec2}
        We model the scattered electric field ${\mathbf E}$ as well as the magnetic field ${\mathbf H}$   as the solutions to the Maxwell's equations in $\R^3$.
        We assume to have an inhomogeneous isotropic material, or in other words, we assume that the permeability $\mu$, the permittivity $\varepsilon$ and the resistance $\sigma$ are scalar-valued functions in $L^\infty(\R^3_+, \R)$ and fulfill the bounds $\mu(x)  \geq  \mu_0 > 0$, $\varepsilon(x)  \geq  \varepsilon_0 > 0$ and $\sigma(x)  \geq 0$ for all $x \in \R^3_+ := \R^2 \times (0, \infty)$. 
        We consider the problem with a perfect conductor on the lower boundary $\R^2 \times \{ 0 \}$ for better readability, which implicates the boundary condition ${\mathbf E}_T := ({\mathbf E}_1, {\mathbf E}_2, 0)^\top = 0$ on $\R^2 \times \{ 0 \}$. The arguments can be easily extended to the case that the obstacle is  surrounded by homogeneous media.

		Define for $R \geq 0$ the sets
		\begin{align*}
			\Omega^R &:= \R^{2} \times (0, R)
			,
			&\Gamma^R &:= \R^{2} \times \{R\}
			,
			\\
			\Gamma_0^R &:= (-\pi, \pi)^{2} \times \{R\}
			\text{ and }
			&I_{} &:= (\nicefrac{-1}{2}, \nicefrac{1}{2})^{2}
			.
		\end{align*}
        We seek  the electric field ${\mathbf E}$ as well as the magnetic field ${\mathbf H}$ as function of the space $H_{\loc}(\curl; \R^3_+)$, where both lie in $H(\curl; \Omega^R)$ for all  $R \geq R_0 > 0$ and fulfill the equations
        \begin{subequations}\label{eq_Maxwell1}
            \begin{align}
            \nabla \times {\mathbf E} - \ii \omega \mu {\mathbf H} 
            &= 0
            ,&
            &\nabla \times {\mathbf H} + \ii (\omega \varepsilon + \ii \sigma) {\mathbf E} 
            = J
            &&\text{in }
            L^2(\R^3_+)^3
            ,
            \\
            \divv(\mu {\mathbf H})
            &= 0
            ,&
            &\divv \left( \varepsilon {\mathbf E} +\ii \frac{\sigma}{\omega} {\mathbf E} \right)
            = \frac{1}{\ii \omega}\divv ( J)
            &&\text{in }
            H^{-1}(\R^3_+)
            ,
            \\
            {\mathbf E}_T\big|_{\Gamma^0 } 
            &= 0
            &
            &
            &&\text{in }
            L^2(\R^2)
            ,
            \end{align}
        \end{subequations}
        where  $\omega \in \R_{\geq 0}$, $\mu$, $\varepsilon$, $\sigma \in L^\infty(\R^3_+, \R)$ and $J \in L^2(\R^3_+)^3$.
        Since the electric field is a function in $H(\curl; \Omega^R)$ for some $R > R_0$, the trace ${\mathbf E}_T\big|_{\Gamma^0 } $ is well-defined on $\Gamma^0$.
        The substitution of the magnetic field ${\mathbf H}$ gives the Maxwell's equation of second order for the electric field ${\mathbf E} \in H(\curl; \Omega^R)$ for all $R \geq R_0 > 0$ of the form
        \begin{subequations}\label{eq_Maxwell3_1}
            \begin{align}
            \nabla \times \biggl( \mu^{-1} \nabla \times {\mathbf E} \biggr) - \omega^2 \epsilon {\mathbf E} 
            &= f
            := \ii \omega  \mu_+ J
            &&\text{in }
            L^2(\R^3_+)^3
            ,
            \label{eq_Maxwell3_a}
            \\
            \divv (\epsilon  {\mathbf E})
            = -\frac{1}{\omega^2 }  \divv  f
            &= \frac{1}{\ii \omega }\divv  J
            &&\text{in }
            H^{-1}(\R^3_+)
            ,
            \\
            {\mathbf E}_T\big|_{\Gamma^0 } 
            &= 0
            &&\text{in }
            L^2(\R^2)
            .
            \end{align}
        \end{subequations}
        Having a solution ${\mathbf E}$ to the equations~\eqref{eq_Maxwell3_1},  the functions ${\mathbf E}$ and ${\mathbf H} := \tfrac{1}{\ii \omega \mu_r \mu_+} \nabla \times {\mathbf E}$ solve the first order Maxwell's equations~\eqref{eq_Maxwell1}.
        
        We assume that the obstacle is bi-periodic in the first two components  $\tx := (x_1, x_2) \in \R^2$ and bounded in the third direction. We characterize the periodicity  by some invertible matrix $\Lambda \in \R^{2 \times 2}$ and  set $\Lambda^*:= 2 \pi (\Lambda^{T})^{-1}$. With the boundedness of the object in the third direction we describe the fact that we can find an $R_0 > 0$, such that the obstacle is supported in the strip $\Omega^{R_0}$. 
        To simplify the notation, we assume without loss of generality that the periodicity equals to the scaled identity matrix $\Lambda = 2\pi I_{2} \in \R^{2 \times 2}$, $\Lambda^* = I_2$ and that the local perturbation $q \in L^\infty(\R_+^{3})$ has the support in 
		$\Omega_0^{R_0}$, where $\Omega_0^{R} := (-\pi, \pi)^{2} \times (0, R)$ 
		for $R \geq R_0$.
        For the existence theory we prescribe the following assumptions:
        \begin{itemize}
            \item 
            The constant $R_0 > 0$ is chosen, such that for a small $\delta > 0$ the parameter $ \varepsilon +\tfrac{\ii\sigma}{\omega}$ and $\mu$ are constant outside of $\Omega^{R_0-\delta}$ with $\sigma = 0$. 
            In other words, the parameter can be described by constants $\mu = \mu_+ > 0$ and $ \varepsilon +\tfrac{\ii\sigma}{\omega} = \varepsilon_+ >0$ in $\R^3_+ \setminus \overline{\Omega^{R_0-\delta}}$. 
            We set the abbreviation $\varepsilon_r :=  ( \varepsilon +  \tfrac{\ii\sigma}{\omega}) /\varepsilon_+$ and $\mu_r :=  \tfrac{\mu}{\mu_+}$ as well as $k^2:=\omega^2 \mu_+\varepsilon_+$.
            
            \item 
            The permittivity $\varepsilon_r$ and the permeability $\mu_r$ are  $2\pi$-periodic and we write  ${\varepsilon}^{\mathrm{s}}_r := \varepsilon_r +q$ for the perturbed permittivity with the perturbation $q \in L^\infty(\R^3_+)$, which is supported in $\Omega_0^{R_0}$.
            
            \item The right-hand side $f$  is supported in $\Omega^{R_0-\delta}$.
        \end{itemize}
        The assumptions allow us to derive a radiation condition for the scattering problem.
        For a sufficient small $\delta > 0$ the parameter $\mu_r$ and $\epsilon_r$ are constant in $\R^3_+ \setminus \overline{\Omega^{R_0-\delta}}$ and the right-hand side vanishes there. Hence, for the divergence of ${\mathbf E}$ it holds $\divv {\mathbf E} = 0$ above the strip domain and the equation~\eqref{eq_Maxwell3_a} reduces to the component-wise homogeneous Helmholtz equation $\Delta {\mathbf E} + k^2 {\mathbf E} = 0$.
        In consequence, the scattered field ${\mathbf E}$ should satisfy the angular spectrum representation as the radiation condition, which is defined by ${\mathbf E}^{R}\in C^\infty \left(\R^2 \times (R, \infty) \right)$,
        \begin{equation}\label{eq_Ausstrahl1}
            {\mathbf E}^{R}(x)
            := \frac{1}{{2\pi}} \int_{\R^2} \mathcal{F} \left( {\mathbf E}\big|_{ \Gamma^{R}} \right) (\xi ) \,e^{\ii \xi \cdot \tx + \ii \sqrt{k^2 - |\xi|^2}(x_3 - R)} \dd  \xi
        \end{equation}
        for $x \in \R^3_{+} \setminus \overline{\Omega^{R_0-\delta}}$.
        We call the space of functions $\phi_T = (\phi_1, \phi_2, 0)^T \in H^{\nicefrac{1}{2}}(\Gamma^{R})^3$ as ${TH}^{\nicefrac{1}{2}} (\Gamma^{R})$. The following regularity theorem shows that the radiation condition~\eqref{eq_Ausstrahl1} is well-defined.
        \begin{theorem}\label{satz_regGebiet}
            Choose $R' > R \in \R$ and set  $\Omega := \R^2 \times (R, R')$. If a function $u \in H(\curl; \Omega ) \cap H(\divv; \Omega )$ satisfies  $u_T \in TH^{\nicefrac{1}{2}}(\partial \Omega)$, then $u \in H^1(\Omega )^3$ and it holds the estimation
            \begin{align*}
                ||u||_{H^1(\Omega )^3}
                \leq c \biggl( ||u||_{H(\curl; \Omega )} + ||\divv u||_{L^2(\Omega )}
                + || u_T ||_{TH^{\nicefrac{1}{2}}(\partial \Omega)} \biggr)
                .
            \end{align*}
        \end{theorem}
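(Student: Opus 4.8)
The plan is to pass to the partial Fourier transform $\mathcal{F}$ in the horizontal variables $\tx = (x_1, x_2)$, which turns the statement into a family of one-dimensional estimates on $(R, R')$ parametrized by the dual variable $\xi \in \R^2$, and to read off the homogeneous part of the $H^1$-norm as $\int_{\R^2}\int_R^{R'}\bigl(|\xi|^2 |\hat u|^2 + |\partial_3 \hat u|^2\bigr)\dd x_3 \dd \xi$ (the $L^2$-part being already controlled by $\|u\|_{H(\curl;\Omega)}$). For fixed $\xi \neq 0$ I would rotate the horizontal components into the longitudinal/transverse frame $\hat u_\parallel := |\xi|^{-1}(\xi_1 \hat u_1 + \xi_2 \hat u_2)$ and $\hat u_\perp := |\xi|^{-1}(-\xi_2 \hat u_1 + \xi_1 \hat u_2)$; this rotation is unitary, commutes with $\partial_3$, and preserves $|\xi|$, so it alters neither the $H^1$-norm nor the $TH^{\nicefrac{1}{2}}$-norm. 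In this frame the transformed curl and divergence decouple: the transverse component is governed entirely by $(\widehat{\curl u})_3 = \ii|\xi|\hat u_\perp$ and by the longitudinal part of $\widehat{\curl u}$, which equals $-\partial_3 \hat u_\perp$, so that $|\xi|^2|\hat u_\perp|^2 + |\partial_3 \hat u_\perp|^2$ is bounded pointwise by $|\widehat{\curl u}|^2$ and requires no boundary information.

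The real work lies in the coupled pair $(\hat u_\parallel, \hat u_3)$, whose data are $C := \partial_3 \hat u_\parallel - \ii|\xi|\hat u_3$ (the transverse curl) and $D := \ii|\xi|\hat u_\parallel + \partial_3 \hat u_3$ (the divergence). Expanding $|C|^2 + |D|^2$, the cross terms assemble into a total $x_3$-derivative, which is the key algebraic step:
\[
    |C|^2 + |D|^2 = |\partial_3 \hat u_\parallel|^2 + |\partial_3 \hat u_3|^2 + |\xi|^2\bigl(|\hat u_\parallel|^2 + |\hat u_3|^2\bigr) - 2|\xi|\,\partial_3\bigl[\Im(\hat u_\parallel \overline{\hat u_3})\bigr].
\]
Integrating over $(R, R')$ produces, for each $\xi$, the desired interior quantity equal to $\int_R^{R'}(|C|^2 + |D|^2)\dd x_3$ plus the single boundary contribution $2|\xi|\bigl[\Im(\hat u_\parallel \overline{\hat u_3})\bigr]_R^{R'}$. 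The longitudinal traces $\hat u_\parallel(R), \hat u_\parallel(R')$ are controlled by $\|u_T\|_{TH^{\nicefrac{1}{2}}(\partial \Omega)}$, whereas the normal traces $\hat u_3(R), \hat u_3(R')$ are not supplied by the hypotheses.

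Disposing of these normal boundary values is the main obstacle. I would apply Young's inequality with a small parameter $\delta$, bounding $2|\xi||\Im(\hat u_\parallel \overline{\hat u_3})| \le \delta^{-1}|\xi||\hat u_\parallel|^2 + \delta|\xi||\hat u_3|^2$ at each endpoint, and then trade the weighted normal trace for an interior integral via the one-dimensional weighted trace inequality obtained from integrating $\partial_3\bigl(|\hat u_3|^2 e^{-2|\xi|(x_3 - R)}\bigr)$; this yields $|\xi||\hat u_3(R)|^2 \lesssim \int_R^{R'}\bigl(|\xi|^2|\hat u_3|^2 + |\partial_3 \hat u_3|^2\bigr)\dd x_3$ together with the analogous term at $R'$, precisely the quantity sitting on the left-hand side. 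For $|\xi|$ bounded away from zero the implied constant is uniform, so a sufficiently small $\delta$ lets me absorb this contribution; for small $|\xi|$ the weight $|\xi|^2$ is harmless, $|\xi|^2|\hat u|^2 \le \xi_0^2|\hat u|^2$ being already dominated by $\|u\|_{L^2(\Omega)}^2$, so a crude small-parameter trace estimate closes that regime.

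Finally, because the hypotheses only furnish the combinations $C, D \in L^2$ rather than the individual derivatives, the total-derivative manipulation above must be justified. I would therefore first carry out all computations for the horizontally mollified fields $u^N := \mathcal{F}^{-1}\bigl(\chi(\xi/N)\,\hat u\bigr)$, which lie in $H^1(\Omega)^3$, derive the estimate with constants independent of $N$, and pass to the limit $N \to \infty$ using the stability of each norm on the right-hand side under this truncation. Reassembling the $\xi$-integral and combining with the unconditional bound on $\hat u_\perp$ then gives $u \in H^1(\Omega)^3$ together with the stated estimate.
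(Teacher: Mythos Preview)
Your argument is correct and at its core coincides with the paper's: the identity you obtain on the Fourier side,
\[
    |C|^2+|D|^2
    = |\partial_3\hat u_\parallel|^2+|\partial_3\hat u_3|^2+|\xi|^2\bigl(|\hat u_\parallel|^2+|\hat u_3|^2\bigr)
      -2|\xi|\,\partial_3\bigl[\Im(\hat u_\parallel\overline{\hat u_3})\bigr],
\]
is exactly the Fourier translation of the paper's physical-space identity
\[
    \int_{\Omega}|\nabla\phi|^2
    = \int_{\Omega}\bigl(|\nabla\times\phi|^2+|\divv\phi|^2\bigr)
      -2\,\Re\Bigl[\int_{\Gamma^{R'}}-\int_{\Gamma^{R}}\Bigr](\divv_T\phi_T)\,\overline{\phi_3}\dd S,
\]
obtained there by two applications of Green's formula; your boundary contribution $2|\xi|\,\Im(\hat u_\parallel\overline{\hat u_3})$ is precisely $-2\,\Re\bigl[(\widehat{\divv_T\phi_T})\overline{\hat\phi_3}\bigr]$. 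Where the two proofs differ is in how the unwanted normal trace $u_3|_{\partial\Omega}$ is absorbed. The paper estimates the whole boundary term at once via the continuity of $\divv_T\colon H^{\nicefrac{1}{2}}\to H^{\nicefrac{-1}{2}}$ and the trace inequality $\|\phi_3\|_{H^{\nicefrac{1}{2}}(\partial\Omega)}\le C\|\phi\|_{H^1(\Omega)}$, arriving at $\|\phi\|_{H^1}^2\le(\text{data})\cdot\|\phi\|_{H^1}$ in one stroke, and closes by density of $C_0^\infty(\R^3)^3$. You instead perform a frequency-wise Young splitting and a scaled one-dimensional trace inequality for $\hat u_3$, which forces the extra case distinction between large and small $|\xi|$; your horizontal Fourier truncation plays the role of the paper's density argument. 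Both routes are valid, but the paper's packaging is noticeably shorter and avoids the high/low frequency split, while your version has the advantage of making the frequency-by-frequency structure (and in particular the decoupling of $\hat u_\perp$) completely explicit.
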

        \begin{proof}
            For a smooth function $\phi \in C^\infty_0(\R^3)^3$ with compact support in $\R^3$ it holds $\nabla \times \nabla \times \phi = \nabla \divv \phi - \Delta \phi$ and Green formula gives
            \begin{align*}
                &\int_{\Omega} |\nabla \phi |^2 \dd x
                = -\sum_{j=1}^{3} \int_{\Omega} \Delta \phi_j\, \overline{\phi}_j \dd x
                + \left[ \int_{\R^2 \times \{R'\}} - \int_{\R^2 \times \{R\}} \right] \frac{\partial}{\partial x_3} \phi_j\, \overline{\phi}_j \dd S
                \\
                &= \int_{\Omega} \nabla \times \nabla \times \phi \cdot \overline{\phi} - (\nabla \divv \phi) \cdot \overline{\phi} \dd x
                + \sum_{j=1}^{3}
                \left[ \int_{\R^2 \times \{R'\}} - \int_{\R^2 \times \{R\}} \right] \frac{\partial \phi_j}{\partial x_3}   \overline{\phi}_j  \dd S
                .
            \end{align*}
            If we apply the Green formula a second time, we derive
            \begin{align*}
                \int_{\Omega} |\nabla \phi |^2 \dd x
                &
                = \int_{\Omega} \nabla \times \phi \cdot \nabla \times \overline{\phi} + \divv \phi\, \divv\overline{\phi} \dd x
                \\
                &\quad
                + \sum_{j=1}^{3}
                \left[ \int_{\R^2 \times \{R'\}} - \int_{\R^2 \times \{R\}} \right] \frac{\partial}{\partial x_3} \phi_j\, \overline{\phi}_j \dd S
                \\
                &\quad 
                + \left[ \int_{\R^2 \times \{R'\}} - \int_{\R^2 \times \{R\}} \right] e_3 \times (\nabla \times \phi) \cdot \overline{\phi} - \divv \phi\,\overline{\phi}_3  \dd S
                .
            \end{align*}
            For the boundary term on $\Gamma :=\R^2 \times \{R'\}$, and analogously on $\R^2 \times \{R\}$, we compute 
            \begin{align*}
                & \int_{\Gamma} e_3 \times (\nabla \times \phi) - \divv \phi\,\overline{\phi}_3 
                + \sum_{j=1}^{3}
                \frac{\partial}{\partial x_3} \phi_j\, \overline{\phi}_j \dd S
                \\
                &=   
                \int_{\Gamma}  \frac{\partial}{\partial x_2} \phi_3\, \overline{\phi}_2 
                + \frac{\partial}{\partial x_1} \phi_3\, \overline{\phi}_1 
                - \frac{\partial}{\partial x_1 } \phi_1\, \overline{\phi}_3 
                - \frac{\partial}{\partial x_2 } \phi_2\, \overline{\phi}_3 
                \dd S
                \\
                &=  - 2\, \Re \int_{\Gamma} (\divv_T \phi_T )\, \overline{\phi}_3  \dd S
                .
            \end{align*}
            Since the operator $\divv_T \colon H^{\nicefrac{1}{2}}(\R^2)^2 \to H^{\nicefrac{-1}{2}}(\R^2)$ is continuous, we conclude
            \begin{align*}
                ||\phi||^2_{H^1(\Omega)^3}
                &\leq ||\phi||^2_{H(\curl; \Omega)} + ||\divv \phi||^2_{L^2(\Omega)}
                + C ||\phi_T||_{TH^{\nicefrac{1}{2}}(\partial \Omega)} ||\phi||_{H^1(\Omega)^3}
                \\
                &\leq \left(||\phi||_{H(\curl; \Omega)} + ||\divv \phi||_{L^2(\Omega)}
                + C ||\phi_T||_{TH^{\nicefrac{1}{2}}(\partial \Omega)} \right)
                ||\phi||_{H^1(\Omega)^3}
            \end{align*}
            for all  $\phi \in C_0^\infty(\R^3)^3$.
            Because  of the denseness of the space $C_0^\infty(\R^3)^3$ of smooth functions with compact support in \mbox{$\{ u \in H(\curl; \Omega ) \cap H(\divv; \Omega ) : u_T \in TH^{\nicefrac{1}{2}}(\partial \Omega) \}$}, the estimation holds for every function $u \in H(\curl; \Omega ) \cap H(\divv; \Omega )$ with $u_T \in TH^{\nicefrac{1}{2}}(\partial \Omega)$.
        \end{proof}
        We  seek a solution to the Maxwell's equations which has the trace in $TH^{\nicefrac{1}{2}}(\Gamma^R)$. Therefore, the solution is locally an $H^1$-regular  around the boundary $\Gamma^R$ and analogously to \cite[Lemma~2.2]{Chand2005} and we can prove that the radiation condition ${\mathbf E}^R$ is well-defined and lies in  $H^1(\R^2 \times (R, R'))$.

    \section{Reduction to a variational problem}\label{Sec3}
        We reduce the scattering problem ~\eqref{eq_Maxwell3_1},~\eqref{eq_Ausstrahl1} to a variational problem. For that, we first apply formally the Gaussian theorem on the domain $\Omega^R$ to derive equation
        \begin{align*}
            \int_{\Omega^R} 
            \nabla \times \biggl( \mu_r^{-1} \nabla \times {\mathbf E} \biggr) \cdot \overline{v} - k^2 {\varepsilon}^{\mathrm{s}}_r {\mathbf E} \cdot \overline{v}
            \dd  x
            &
            =
            \int_{\Omega^R} 
            \mu_r^{-1} \nabla \times {\mathbf E}  \cdot \nabla \times \overline{v} 
            - k^2{\varepsilon}^{\mathrm{s}}_r {\mathbf E} \cdot \overline{v} 
            \dd  x
            \\
            &\quad\quad
            + \left[ \int_{\Gamma^{R}} - \int_{\Gamma^{-R}} \right] 
            \left[ e_3 \times (\nabla \times {\mathbf E}) \cdot \overline{v} \right] 
            \dd  S
            .
        \end{align*}
        In the following we  replace the boundary term by some boundary condition and adapt the radiation condition, such that the solution to the variational problem  solve the scattering problem~\eqref{eq_Maxwell3_1},~\eqref{eq_Ausstrahl1}.
        Since the trace of the third component ${\mathbf E}_{3}\big|_{\Gamma^{R}}$ of a function $H(\curl; \Omega^R)$ is not well-defined, we have to reformulate the radiation condition.
        
        Near the boundary  $\Gamma^{R}$ for some $R \geq R_0$ the $H^1$-regularity of the solution follows by theorem~\ref{satz_regGebiet}. For a sufficient small $ \delta > 0$  it follows by standard regularity results that the solution actually lies in $H^2(\R^2 \times (R-\delta, R+\delta))$ and, in particular, the solution ${\mathbf E}$  satisfies the equation
        \[
            e_3 \times (\nabla \times {\mathbf E})
            = \nabla_T {\mathbf E}_3 - \frac{\partial {\mathbf E}_T}{\partial x_3}
            \quad\text{in }
            H^{\nicefrac{1}{2}}(\Gamma^R)
            .
        \]
        Since for $R \geq R_0$ the solution is given by the radiation condition~\eqref{eq_Ausstrahl1}, 
        we can express $\frac{\partial {\mathbf E}_T}{\partial x_3}\big|_{\Gamma^{R}}$ as 
        \begin{align}\label{eq_T}
            \frac{\partial {\mathbf E}_T}{\partial x_3}\big|_{\Gamma^{R}}
            = T( {\mathbf E}_T\big|_{\Gamma^{R}})(\tx, R)
            := \frac{\ii}{2\pi}  \int_{\R^{2}}   \sqrt{k^2 - | \xi|^2}\, \mathcal{F}( {\mathbf E}_T \big|_{\Gamma^{R}}) (\xi)\, e^{\ii \tx \cdot \xi} \dd \xi
        \end{align}
        for $\tx \in \R^{2}$.
        The operator $T \colon H^{\nicefrac{1}{2}}(\Gamma^{R}) \to H^{\nicefrac{-1}{2}}(\Gamma^{R})$ is called the Dirichlet-to-Neumann operator and satisfies the inequalities
        \[
            -\Re \langle T \phi, \phi \rangle_{H^{\nicefrac{-1}{2}}(\Gamma^{R}) \times H^{\nicefrac{1}{2}}(\Gamma^{R})}
            \geq 0
            \quad
            \text{and}
            \quad
            \Im \langle T \phi, \phi \rangle_{H^{\nicefrac{-1}{2}}(\Gamma^{R}) \times H^{\nicefrac{1}{2}}(\Gamma^{R})}
            \geq 0
        \]
        (compare \cite{Chand2005}).
        For the remaining term $\nabla_T {\mathbf E}_3$ we consider the identity
        \[
            0
            = \divv {\mathbf E}
            = \divv_T {\mathbf E}_T + \frac{\partial {\mathbf E}_3}{\partial x_3}
            \quad \text{in }
            H^1 \left(\R^2 \times (R-\delta, R+\delta)\right)
        \]
        together with the Fourier-transform and \eqref{eq_T} to derive
        \[
            \ii \xi \,\mathcal{F} \left({\mathbf E}_{3}\big|_{\Gamma^{R}} \right)
            = -\frac{\ii \xi}{\sqrt{k^2 - | \xi|^2}}\, \left(\xi \cdot \mathcal{F}({\mathbf E}_{T}\big|_{\Gamma^{R}})\right)
            \quad \text{in }
            L^2\left(\R^2\right)
            .
        \]
        We define the space $\hat{TH}^{\nicefrac{1}{2}} (\Gamma^{R})$ as 
        \begin{align*}
            \hat{TH}^{\nicefrac{1}{2}} (\Gamma^{R})
            := \left\{ u_T \in {TH}^{\nicefrac{1}{2}} (\Gamma^{R}) : \biggl(\xi \mapsto \frac{\xi \cdot \mathcal{F}(u_T)(\xi)}{|k^2 - |\xi|^2|^{\nicefrac{1}{4}}} \biggr)\in L^2(\R^2) \right\}
            ,
        \end{align*}
        which has the norm
        \begin{equation*}
            ||u_T||^2_{\hat{TH}^{\nicefrac{1}{2}}(\Gamma^{R})}
            := 
            \bigintsss_{\R^2} \frac{1}{|k^2 - |\xi|^2|^{\nicefrac{1}{2}}}\, |\xi \cdot \mathcal{F}(u_T)(\xi)|^2
            +|1 + |\xi|^2|^{\nicefrac{1}{2}}\, |\mathcal{F}(u_T)(\xi)|^2
            \dd \xi
            ,
        \end{equation*}
        and the corresponding scalar product.
        By construction of the space $\hat{TH}^{\nicefrac{1}{2}} (\Gamma^{R})$, the operator $N \colon \hat{TH}^{\nicefrac{1}{2}} (\Gamma^{R}) \to \hat{TH}^{\nicefrac{1}{2}} (\Gamma^{R})'$,
        \[
            N(\phi_T) (\tx)
            := - \frac{1}{2\pi} \int_{\R^2} \frac{\ii \xi}{\sqrt{k^2 - | \xi|^2}}\, \left(\xi \cdot \mathcal{F}(\phi_T)(\xi)\right)\, e^{\ii \xi \cdot \tx} \dd  \xi
            ,
        \]
        is well-defined and satisfies the inequalities
        \[
            -\Re \langle N \phi_T, \phi_T \rangle
            \geq 0
            \ \ \quad\text{and}\quad
            \quad
            -\Im \langle N \phi_T, \phi_T \rangle
            \geq 0
            .
        \]
        Thus, we can define the solution space $X$ as
        \[
            X
            := \left\{ u \in H(\curl; \Omega^R) : u_{T}\big|_{\Gamma^{R}} \in \widehat{TH}^{\nicefrac{1}{2}}(\Gamma^{R}),\ u_{T}\big|_{\Gamma^{0}}=0 \right\}
            ,
        \]
        where the norm is given by
        $
            ||u||^2_X 
            := ||u||^2_{H(\curl; \Omega^R)} 
            + ||u_T||^2_{\hat{TH}^{\nicefrac{1}{2}}(\Gamma^{R})}
            .
        $
        Therefore, we have derived the following variational formulation:
        \begin{problem}\label{prob_Maxwell}
            {We seek ${\mathbf E} \in X$, such that} 
            \begin{align*}
                a_q({\mathbf E}, v)
                &:= \int_{\Omega^R} 
                \mu_r^{-1} \left(\nabla \times {\mathbf E}\right) \cdot \left(\nabla \times \overline{v } \right) 
                - k^2{\varepsilon}^{\mathrm{s}}_r {\mathbf E} \cdot \overline{v} 
                \dd  x
                \nonumber
                \\
                &\quad \quad 
                + \int_{\Gamma^{R}} 
                N({\mathbf E}_{T}\big|_{\Gamma^{R}})  \cdot \overline{v}_{T}\big|_{\Gamma^{R}}- T({\mathbf E}_{T}\big|_{\Gamma^{R}})  \cdot \overline{v}_{T}\big|_{\Gamma^{R}}   
                \dd  S
                \nonumber
                \\ 
                &= \int_{\Omega^R} f \cdot  \overline{v} \dd x
                \end{align*}
            {is satisfied for all   $v \in X$.}
        \end{problem}
        For the existence theory we assume some regularity for the parameter.
        \begin{assumption}\label{assumption_absorptionMaxwell}
            The set $\{\Im \epsilon_r > 0\}$ includes an open ball. Furthermore, the parameter $\epsilon_r$, $q$ and $\mu_r$ should be functions in $W^{1,\infty}(\R^3_+)$ and bounded from below by $\Im {\varepsilon}^{\mathrm{s}}_r \geq 0$, $\Im \mu_r \geq 0$ as well as  by $\Re \epsilon_r \geq \epsilon_0 > 0$ and $\Re \mu_r \geq \mu_0 > 0$.
        \end{assumption}
        
        Having a solution to the variational problem \ref{prob_Maxwell}, we extend the function by the radiation condition and it solves the scattering problem. This results was proven in \cite{Hadda2011}, which we  summarize here.

        \begin{lemma}\label{lemma_rueckUndRand}
            The solution to the variational problem ~\ref{prob_Maxwell} is a distributional solution to the equations~\eqref{eq_Maxwell3_1} in $\Omega^R$ and the equation ~\eqref{eq_Maxwell3_a} holds in $L^2(\Omega^R)$. Moreover, the identity 
            \begin{equation}\label{eq_CalderonRand}
                e_3 \times (\nabla \times {\mathbf E})\big|_{ \Gamma^{R} }
                = N({\mathbf E}_{T}\big|_{\Gamma^{R}}) -  T({\mathbf E}_{T}\big|_{\Gamma^{R}})
            \end{equation}
            holds in $H^{\nicefrac{-1}{2}}(\Gamma^{R})^3$.
        \end{lemma}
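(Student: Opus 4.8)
The plan is to run the classical argument that recovers the strong equation together with its natural boundary condition from the variational identity, treating the three assertions in turn. First I would test Problem~\ref{prob_Maxwell} against fields $v \in C_0^\infty(\Omega^R)^3$ with compact support in the open strip. For such $v$ the tangential trace $v_T$ vanishes on both $\Gamma^0$ and $\Gamma^R$, so the two boundary integrals built from $N$ and $T$ drop out and the identity reduces to
\[
    \int_{\Omega^R} \mu_r^{-1}(\nabla\times{\mathbf E})\cdot(\nabla\times\overline v) - k^2\varepsilon^{\mathrm s}_r{\mathbf E}\cdot\overline v \dd x = \int_{\Omega^R} f\cdot\overline v\dd x .
\]
By definition of the distributional curl the left-hand side equals $\langle \nabla\times(\mu_r^{-1}\nabla\times{\mathbf E}) - k^2\varepsilon^{\mathrm s}_r{\mathbf E},\,v\rangle$, so this is precisely the statement that ${\mathbf E}$ solves~\eqref{eq_Maxwell3_a} in the distributional sense on $\Omega^R$; the divergence equation and the condition ${\mathbf E}_T|_{\Gamma^0}=0$ are already encoded in membership of $X$.

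For the second assertion I would simply observe that $f + k^2\varepsilon^{\mathrm s}_r{\mathbf E}$ lies in $L^2(\Omega^R)^3$, since $f\in L^2$, $\varepsilon^{\mathrm s}_r\in L^\infty$ and ${\mathbf E}\in L^2$. Hence the distribution $\nabla\times(\mu_r^{-1}\nabla\times{\mathbf E})$ is represented by an $L^2$ function and~\eqref{eq_Maxwell3_a} holds as an identity in $L^2(\Omega^R)$. In particular $\mu_r^{-1}\nabla\times{\mathbf E}\in H(\curl;\Omega^R)$, which is exactly what is needed to give the tangential trace $e_3\times(\nabla\times{\mathbf E})\big|_{\Gamma^R}$ a meaning in $H^{\nicefrac{-1}{2}}(\Gamma^R)^3$ (recall $\mu_r\equiv 1$ near $\Gamma^R$).

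For the boundary identity~\eqref{eq_CalderonRand} I would take a general $v\in X$ and apply the Green formula for $H(\curl)$ used to set up the variational form at the start of Section~\ref{Sec3}. This rewrites $\int_{\Omega^R}\mu_r^{-1}(\nabla\times{\mathbf E})\cdot(\nabla\times\overline v)\dd x$ as $\int_{\Omega^R}\nabla\times(\mu_r^{-1}\nabla\times{\mathbf E})\cdot\overline v\dd x$ minus the boundary pairing $\int_{\Gamma^R}(e_3\times(\nabla\times{\mathbf E}))\cdot\overline v_T\dd S$, the $\Gamma^0$-contribution vanishing because $v_T|_{\Gamma^0}=0$. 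Subtracting the volume identity established in the first two steps, Problem~\ref{prob_Maxwell} collapses to
\[
    \int_{\Gamma^R}\Big[N({\mathbf E}_T) - T({\mathbf E}_T) - e_3\times(\nabla\times{\mathbf E})\Big]\cdot\overline v_T\dd S = 0
    \quad\text{for all } v\in X .
\]
Since the tangential trace $v\mapsto v_T|_{\Gamma^R}$ maps $X$ onto $\widehat{TH}^{\nicefrac{1}{2}}(\Gamma^R)$, the bracketed functional is annihilated by every element of that space and therefore vanishes; as $T({\mathbf E}_T)$ and $e_3\times(\nabla\times{\mathbf E})$ are genuine $H^{\nicefrac{-1}{2}}$ elements, the identity in fact holds in $H^{\nicefrac{-1}{2}}(\Gamma^R)^3$, yielding~\eqref{eq_CalderonRand}.

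The hard part will be making the Green formula fully rigorous at the available regularity: the boundary term is a duality pairing between $H^{\nicefrac{-1}{2}}$ and $\widehat{TH}^{\nicefrac{1}{2}}$ rather than an honest surface integral, so it must be justified by density of smooth fields together with the local $H^2$-regularity near $\Gamma^R$ (from Theorem~\ref{satz_regGebiet} and standard elliptic regularity), which guarantees that $e_3\times(\nabla\times{\mathbf E})|_{\Gamma^R}$ is a bona fide $H^{\nicefrac{-1}{2}}$ trace. A secondary technical point is the surjectivity of the tangential trace onto the weighted space $\widehat{TH}^{\nicefrac{1}{2}}(\Gamma^R)$, needed to pass from \emph{``the pairing vanishes for all $v$''} to the identity in $H^{\nicefrac{-1}{2}}(\Gamma^R)^3$; this reduces to a trace-lifting construction compatible with the norm defining that space.
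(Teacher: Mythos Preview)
Your argument is correct and is the standard route for recovering the strong equation and the natural boundary condition from a variational identity: test with compactly supported fields to get the distributional PDE, read off $L^2$-regularity of $\nabla\times(\mu_r^{-1}\nabla\times{\mathbf E})$, and then integrate by parts against general $v\in X$ to isolate the boundary term. The paper does not actually carry this out; it simply refers to \cite[Korollar~3.2]{Hadda2011}, whose proof follows exactly the pattern you describe, including the two technical points you flag (justifying the $H(\curl)$ Green formula via the regularity $\mu_r^{-1}\nabla\times{\mathbf E}\in H(\curl;\Omega^R)$, and the trace-lifting onto $\widehat{TH}^{\nicefrac{1}{2}}(\Gamma^R)$).
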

        \begin{proof}
            We refer to \cite[Korollar~3.2]{Hadda2011}.
        \end{proof}
        
        \begin{lemma}\label{lemma_ausstrahlungsbedingung}
            The extension of the solution ${\mathbf E}\in X$ to the variational problem~\ref{prob_Maxwell}  for some $R \geq R_0$ to ${\mathbf E}'$ defined by ${\mathbf E}' := {\mathbf E}$ in $\Omega^R$  and
            \begin{subequations}\label{eq_Ausstrahl2_1}
                \begin{align}
                {\mathbf E}'_T(x) &:= \int_{\R^2} \mathcal{F}({\mathbf E}_T)(\xi)\, e^{\ii \xi \cdot \tx + \ii \sqrt{k^2-|\xi|^2}(x_3 - R)}   \;d \xi
                \\
                {\mathbf E}'_3(x) &:=  \int_{\R^2}  \frac{- 1}{\sqrt{k^2 - | \xi|^2}}\, \left(\xi \cdot \mathcal{F}({\mathbf E}_T)(\xi) \right)\, e^{\ii \xi \cdot \tx + \ii \sqrt{k^2-|\xi|^2}(x_3 -  R)} \;d \xi
                \end{align}
            \end{subequations}
            for $x \in \R^3_+\setminus\overline{\Omega^R}$ solves the scattering problem~\eqref{eq_Maxwell3_1},~\eqref{eq_Ausstrahl1} in $\R^3_+$. 
            Furthermore, the extension ${\mathbf E}'$ solves the variational problem~\ref{prob_Maxwell} for $R' > R$ and it holds ${\mathbf E}' \in H^1(\Omega^{R'} \setminus\Omega^{R_0 - \delta})^3$ for all $R' \geq R_0$ and for  $\delta > 0$ small enough.
        \end{lemma}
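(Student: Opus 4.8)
The plan is to verify, in order, that the extension ${\mathbf E}'$ is (i) a distributional solution of the second order Maxwell equations on all of $\R^3_+$ satisfying the radiation condition~\eqref{eq_Ausstrahl1}, (ii) a solution of Problem~\ref{prob_Maxwell} on every larger slab $\Omega^{R'}$, and (iii) of class $H^1$ on $\R^2\times(R_0-\delta,R')$. First I would record that the formulas~\eqref{eq_Ausstrahl2_1} are exactly the componentwise angular spectrum representation and that they reproduce the correct Cauchy data on $\Gamma^R$: evaluating at $x_3=R$ the exponential $e^{\ii\sqrt{k^2-|\xi|^2}(x_3-R)}$ equals $1$, so ${\mathbf E}'_T(\cdot,R)={\mathbf E}_T|_{\Gamma^R}$ by definition, while the third component matches because, by the divergence identity $\ii\xi\,\mathcal F({\mathbf E}_3|_{\Gamma^R}) = -\tfrac{\ii\xi}{\sqrt{k^2-|\xi|^2}}(\xi\cdot\mathcal F({\mathbf E}_T|_{\Gamma^R}))$ used to define $N$, the trace of ${\mathbf E}_3$ equals $-\tfrac{1}{\sqrt{k^2-|\xi|^2}}(\xi\cdot\mathcal F({\mathbf E}_T|_{\Gamma^R}))$ in Fourier space, i.e.\ precisely the value produced by~\eqref{eq_Ausstrahl2_1} at $x_3=R$. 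A direct computation on the Fourier side then shows that each component of ${\mathbf E}'$ solves the homogeneous Helmholtz equation $\Delta{\mathbf E}'+k^2{\mathbf E}'=0$ above $\Gamma^R$ (since $-|\xi|^2-(k^2-|\xi|^2)+k^2=0$) and that $\divv{\mathbf E}'=0$ there (the tangential contribution $\ii(\xi\cdot\mathcal F({\mathbf E}_T))$ and the third contribution $\ii\sqrt{k^2-|\xi|^2}\,\mathcal F({\mathbf E}'_3)=-\ii(\xi\cdot\mathcal F({\mathbf E}_T))$ cancel).

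Second, and this is the heart of the argument, I would show that the two pieces glue into a single $H_\loc(\curl)$ solution of~\eqref{eq_Maxwell3_1} across $\Gamma^R$. Continuity of the tangential trace ${\mathbf E}_T$ already gives ${\mathbf E}'\in H_\loc(\curl;\R^3_+)$; what remains is the transmission condition that the tangential trace of $\mu_r^{-1}\nabla\times{\mathbf E}'$ (with $\mu_r\equiv1$ above the strip) be continuous, so that no spurious surface current appears in the distributional equation. From the interior, Lemma~\ref{lemma_rueckUndRand} supplies $e_3\times(\nabla\times{\mathbf E})|_{\Gamma^R}=N({\mathbf E}_T|_{\Gamma^R})-T({\mathbf E}_T|_{\Gamma^R})$. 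From the exterior I would compute the same quantity from~\eqref{eq_Ausstrahl2_1} via $e_3\times(\nabla\times{\mathbf E}')=\nabla_T{\mathbf E}'_3-\tfrac{\partial{\mathbf E}'_T}{\partial x_3}$: differentiation in $x_3$ brings down the factor $\ii\sqrt{k^2-|\xi|^2}$, so $\tfrac{\partial{\mathbf E}'_T}{\partial x_3}|_{\Gamma^R}=T({\mathbf E}_T|_{\Gamma^R})$ by~\eqref{eq_T}, while $\nabla_T{\mathbf E}'_3|_{\Gamma^R}=N({\mathbf E}_T|_{\Gamma^R})$ directly from the definition of $N$. Hence both one-sided curl-traces coincide, the transmission condition~\eqref{eq_CalderonRand} holds, and ${\mathbf E}'$ is a distributional solution of~\eqref{eq_Maxwell3_1} on $\R^3_+$; the radiation condition~\eqref{eq_Ausstrahl1} then holds by construction.

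Third, to see that ${\mathbf E}'$ solves Problem~\ref{prob_Maxwell} on any $\Omega^{R'}$ with $R'>R$, I would invoke the semigroup (consistency) property of the upward propagator: the trace of ${\mathbf E}'$ on $\Gamma^{R'}$ is again the angular spectrum continuation of ${\mathbf E}_T|_{\Gamma^R}$, so applying the operators $N,T$ at height $R'$ to ${\mathbf E}'_T|_{\Gamma^{R'}}$ reproduces $e_3\times(\nabla\times{\mathbf E}')|_{\Gamma^{R'}}$ exactly as above. Since ${\mathbf E}'$ solves~\eqref{eq_Maxwell3_a} on $\Omega^{R'}$ and ${\mathbf E}'_T|_{\Gamma^{R'}}\in\hat{TH}^{\nicefrac{1}{2}}(\Gamma^{R'})$, the Gaussian theorem reproduces the sesquilinear identity of Problem~\ref{prob_Maxwell} at level $R'$, with $f$ unchanged since its support lies in $\Omega^{R_0-\delta}$.

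Finally, for the $H^1$-regularity on $\R^2\times(R_0-\delta,R')$ I would argue piecewise. On the intermediate slab $\R^2\times(R_0-\delta,R)$ the field satisfies $\divv{\mathbf E}=0$ (source-free, constant parameters) and ${\mathbf E}_T\in TH^{\nicefrac{1}{2}}$, so Theorem~\ref{satz_regGebiet} yields ${\mathbf E}\in H^1$. Above $\Gamma^R$ the angular spectrum integrals are smooth and square-summable in the $H^1$-sense, as in the argument following Theorem~\ref{satz_regGebiet} and \cite[Lemma~2.2]{Chand2005}. Since the full trace of ${\mathbf E}'$ matches from both sides of $\Gamma^R$, the glued function is $H^1$ across the interface. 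I expect the main obstacle to be the control of the $1/\sqrt{k^2-|\xi|^2}$ singularity at $|\xi|=k$ entering ${\mathbf E}'_3$ and the curl-trace: this is exactly why the membership ${\mathbf E}_T|_{\Gamma^R}\in\hat{TH}^{\nicefrac{1}{2}}(\Gamma^R)$ is imposed, guaranteeing that the weighted integrals defining ${\mathbf E}'_3$ and the boundary pairings converge and that the interior and exterior curl-traces cancel precisely rather than only formally.
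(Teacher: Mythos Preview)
The paper does not actually prove this lemma; it simply cites \cite[Korollar~3.3]{Hadda2011}. Your proposal, by contrast, spells out the argument in full, and the line of reasoning you give --- check that the angular-spectrum extension is a Helmholtz solution with $\divv{\mathbf E}'=0$ above $\Gamma^R$, match tangential traces of ${\mathbf E}$ and of $\mu_r^{-1}\nabla\times{\mathbf E}$ across $\Gamma^R$ using Lemma~\ref{lemma_rueckUndRand} on the inside and direct Fourier computation on the outside, then invoke Theorem~\ref{satz_regGebiet} together with the Chandler--Wilde estimate for the $H^1$-regularity --- is the natural proof and is correct as outlined.

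One small point worth tightening: when you assert that ``the full trace of ${\mathbf E}'$ matches from both sides of $\Gamma^R$'' in the $H^1$-gluing step, the matching of the third component from the \emph{interior} side is not automatic from ${\mathbf E}\in X$; you need the observation (made in the paper right after the lemma) that $\divv{\mathbf E}=0$ on $\R^2\times(R_0-\delta,R)$ forces $\mathcal{F}({\mathbf E}_3|_{\Gamma^R})=-(k^2-|\xi|^2)^{-1/2}(\xi\cdot\mathcal F({\mathbf E}_T|_{\Gamma^R}))$, which in turn relies on the $H^1$-regularity from Theorem~\ref{satz_regGebiet} that you have already established on the inner slab. You have the ingredients, but the logical order should be: interior $H^1$ first, then identification of ${\mathbf E}_3|_{\Gamma^R}$, then the trace match, then the global $H^1$ across $\Gamma^R$.
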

        \begin{proof}
            We refer to \cite[Korollar~3.3]{Hadda2011}.
        \end{proof}
        
        Having the $H^1$-regularity of the solution ${\mathbf E} \in X$ to the variational problem near the boundary $\Gamma^{R}$, we can conclude that the third component is well-defined and can be characterized by
        \[
            \mathcal{F}({\mathbf E}_3)
            = \frac{- 1}{\sqrt{k^2 - | \xi|^2}}\, \left(\xi \cdot \mathcal{F}({\mathbf E}_T)(\xi) \right)
            \quad \text{in }
            H^{\nicefrac{1}{2}}(\Gamma^{R})
        .
        \]
        Thus, the solution satisfies the radiation condition~\eqref{eq_Ausstrahl1}.

    \section{Existence theory for the periodic permittivity}\label{Sec4}
        At first, we consider the case that both parameter are periodic and that there is  no perturbation  in the permittivity, or in other words that $q=0$ and $\varepsilon_r^\mathrm{s} = \varepsilon_r$. This allows us to apply the Bloch-Floquet transform and consider the quasi-periodic problem. For the quasi-periodic problem we  decompose the solution space with a carefully chosen Helmholtz decomposition to gain a reduced problem on a more regular solution space, which is compactly embedded in $L^2(\Omega_0^R)$. Afterwards, we have to construct the solution to problem \ref{prob_Maxwell} by analyzing the behavior of the quasi-periodic solution operator w.r.t.\ the quasi-periodicity.

        A function is called $\alpha$-quasi-periodic with $\alpha \in \R^{2}$ and period $2 \pi$, if
        \[
            u(\tx + 2 \pi j, x_d)
            = e^{-2 \pi \ii \alpha \cdot j} u(\tx, x_d)
            \quad\text{holds for all }
            j \in \Z^{2}
            .
        \]
        For smooth functions $\phi \in C^\infty_0(\overline{\Omega^R})$ the horizontal Bloch-Floquet transform $\JJ_{\R^2}$ is defined by
        \[
            \JJ_{\R^2} \phi (\alpha, \tx, x_d)
            :=  \sum_{j \in \Z^{2}} \phi (\tx + 2 \pi j, x_d) e^{2 \pi \ii \alpha \cdot j}
            .
        \]
        Recall the spaces $H^s_\alpha(\Omega_0^R)$ and $H^s_\alpha(\Gamma_0^R)$ of $\alpha$-quasi-periodic weakly differentiable functions, and set $\tilde{H}^s_\alpha(\Omega_0^R)$ as the subspace of functions $u \in H^s_\alpha(\Omega_0^R)$ satisfying $u \big|_{\Gamma_0^0} = 0$. The Bloch-Floquet transform extends for $s \in \R$ to an isomorphism between $\tilde{H}^s(\Omega^R)$ and $L^2(I_{}; \tilde{H}^s_\alpha(\Omega_0^R))$ as well as between ${H}^s(\Gamma^R)$ and $L^2(I_{}; {H}^s_\alpha(\Gamma_0^R))$, where the index $\alpha$ indicates that the space depends on $\alpha \in I_{}$ (see \cite{Lechleiter2016}).
        The inverse of the transform is given by
        \[
            \JJ_{\R^2}^{-1} w
            (\tx + 2 \pi j, x_d)
            =  \int_{I_{}} w(\alpha, \tx, x_d) e^{-2 \pi \ii \alpha \cdot j}
            \dd \alpha
            \quad
            \text{for } 
            x \in \Omega_0^R
            ,
            \ 
            j \in \Z^{2}
            .
        \]

        \subsection{Alternative formulation on a bounded domain}
            At first, we  formulate the quasi-periodic scattering problem, which is not well-defined for some quasi-periodicities. For that, we define the set of singularities as
            \[
                \mathcal{A}
                := \left\{
                \alpha \in \overline{I_{}} : |\alpha + j| = k
                \quad\text{for some }
                j \in \Z^2
                \right\}
                .
            \]
            For this problem we consider functions, which are quasi-periodic in $\tx$. Hence, the radiation condition reduces to the Rayleigh radiation condition and we adapt the boundary condition. We write $u_\alpha:=\tilde{u}(\alpha, \cdot)$ for $\alpha \in \overline{I}_{}$ and $\tilde{u} \in L^2(I_{}; L^2(\Omega_0^R))^3$ and define for 
            $\alpha \in \overline{I} \setminus \mathcal{A}$ the space $\tilde{X}_\alpha$ as
            \begin{align*}
                \tilde{X}_\alpha
                &:= \left\{ {\mathbf E}_\alpha \in H_\alpha(\curl; \Omega_0^R) : 
                {\mathbf E}_{\alpha,T} \big|_{\Gamma_0^R} \in TH_\alpha^{\nicefrac{1}{2}}(\Gamma_0^{R})
                ,\ 
                {\mathbf E}_{\alpha,T} \big|_{\Gamma_0^0} =0
                \right\}
                ,\ 
            \end{align*}
            where $H_\alpha(\curl; \Omega_0^R)$ is the subspace of $L^2(\Omega^R_0)$-functions which are $\alpha$-quasi-periodic and which the $\curl$ operator maps into $L^2(\Omega^R_0)$. The trace of these functions can be written as a Fourier series. Since we only need the transverse part of a vector field on the boundary, we  write  ${\mathbf E}_{\alpha, T}$ instead of ${\mathbf E}_{\alpha, T}\big|_{\Gamma_0^R}$ from now on.
            
            Analogously to the continuous problem, we avoid the trace of the third component in the sesquilinear form, since it is not well-defined for all $H_\alpha(\curl; \Omega_0^R)$-functions. Thus, we derive the extension 
            \begin{equation*}
                {\mathbf E}_{\alpha, 3}^{R}(\tx, x_3)
                = \frac{1}{2 \pi}   \sum_{j \in \Z^2} \frac{ 1}{\beta_j} \left( \alpha_j \cdot \widehat{({\mathbf E}_{\alpha, T})}_j \right)\, e^{-\ii \alpha_j \cdot \tx+  \ii\beta_j(x_3 - R)}
            \end{equation*}
            for $x \in \R^2 \times (R, \infty)$.
            For all $\alpha \in \overline{I}$ we define the quasi-periodic Dirichlet-to-Neumann operator $T_\alpha\colon TH_\alpha^{\nicefrac{1}{2}}(\Gamma_0^R) \to TH_\alpha^{\nicefrac{1}{2}}(\Gamma_0^R)'$ for $\phi_T = \sum_{j \in \Z^{2}} \hat{(\phi_T)}_j(\alpha) e^{ -\ii (\alpha + j) \cdot \tx}(\tx) $ by
            \[
                T_\alpha(\phi_T)(\tx)
                = 
                \sum_{j \in \Z^{2}} \sqrt{k^2 - |\alpha + j |^2} \hat{(\phi_T)}_j(\alpha) e^{-\ii (\alpha + j) \cdot \tx}
                ,
            \]
            which satisfies the inequalities
            \begin{equation}\label{eq_Tinequalities}
                \Re \langle T_\alpha \phi_T, \phi_T \rangle
                \leq 0
                \ \ \quad\text{and}\quad
                \quad
                \Im \langle T_\alpha \phi_T, \phi_T \rangle
                \geq 0
                .
            \end{equation}
            For all $\alpha \in \overline{I}\setminus \mathcal{A}$ we define the operator $N_\alpha \colon TH_\alpha^{\nicefrac{1}{2}} (\Gamma_0^{R}) \to TH_\alpha^{\nicefrac{1}{2}} (\Gamma_0^{R})'$ as 
            \begin{equation*}
                N_\alpha(\phi_T ) (\tx)
                :=  - \frac{1}{2 \pi}  \sum_{j \in \Z^2} \frac{\ii\alpha_j}{\sqrt{k^2 - | \alpha_j|^2}} \left(\alpha_j \cdot \widehat{(\phi_T )}_j \right) e^{-\ii \alpha_j \cdot \tx}
                ,
            \end{equation*}
            which is well-defined and satisfies the inequalities
            \begin{equation}\label{eq_Ninequalities}
                -\Re \langle N_\alpha \phi_T, \phi_T \rangle
                \geq 0
                \ \ \quad\text{and}\quad
                \quad
                -\Im \langle N_\alpha \phi_T, \phi_T \rangle
                \geq 0
                .
            \end{equation}
            Moreover, the solution space $\tilde{X} := \JJ_{\R^2}  X = L^2_{\mathrm{w}}(I_{}; \tilde{X}_\alpha)$ is given by 
            \[
                L^2_{\mathrm{w}}(I_{}; \tilde{X}_\alpha)
                := \left\{ u \in L^2(I_{}; \tilde{X}_\alpha) : \bigintsss_{I_{}} \sum_{j \in \Z^2}  \frac{\left|\alpha_j \cdot \widehat{(  {\mathbf E}_{\alpha, T})}_j \right|^2}{|k^2 - | \alpha_j|^2|^{\nicefrac{1}{2}}} \dd \alpha  < \infty \right\}
                ,
            \]
            where the norm can be written as
            \[
                ||u||_{L^2_{\mathrm{w}}(I_{}; \tilde{X}_\alpha)}^2
                := \bigintsss_{I_{}} ||u(\alpha, \cdot)||^2_{\tilde{X}_\alpha} + \sum_{j \in \Z^2}  \frac{\left|\alpha_j \cdot \widehat{(  {\mathbf E}_{\alpha, T})}_j \right|^2 }{|k^2 - | \alpha_j|^2|^{\nicefrac{1}{2}}} \dd \alpha
                .
            \]
            Therefore, we can state the alternative problem as:
            
            \emph{For $\tilde{f} \in L^2(I_{} \times \Omega^R_0)$ we seek $\tilde{{\mathbf E}} \in L^2_{\mathrm{w}}(I_{}; \tilde{X}_\alpha)$, such that}
            \begin{subequations}\label{eq_transVarProblem}
                \begin{align}
                    &\int_{I_{}} \int_{\Omega_0^R} 
                    \mu_r^{-1} \left(\nabla \times \tilde{{\mathbf E}}_\alpha \right) \cdot \left( \nabla \times \overline{v}_\alpha \right) 
                    - k^2\epsilon_r \tilde{{\mathbf E}}_\alpha \cdot \overline{v}_\alpha
                    \dd x \dd \alpha
                    \\ 
                    & \quad \quad
                    + \int_{I_{}}  \int_{\Gamma_0^{R}} 
                    N_\alpha(  \tilde{{\mathbf E}}_{\alpha,T})  \cdot \overline{v}_{\alpha,T} 
                    - T_\alpha( \tilde{{\mathbf E}}_{\alpha,T})  \cdot   \overline{v}_{\alpha,T}
                    \dd S \dd \alpha
                    \\
                    &
                    = \int_{I_{}} \int_{\Omega_0^R} \tilde{f}_\alpha \cdot \overline{v_\alpha} \dd x\dd \alpha
                \end{align}
            \end{subequations}
            \emph{holds for all  $v \in L^2_{\mathrm{w}}(I_{}; \tilde{X}_\alpha)$.}
            
            The sesquilinear form is well-defined by construction and the following lemma shows the equivalence of the problems.
            \begin{lemma}
                The function  ${\mathbf E} \in X$ solves the variational problem~\eqref{eq_redProb} in the strip domain $\Omega^R$ if, and only if, $\tilde{{\mathbf E}} := (\JJ_{\R^2} {\mathbf E}) \in L^2_{\mathrm{w}}(I_{}; \tilde{X}_\alpha)$ solves the alternative variational problem~\eqref{eq_transVarProblem} with the right-hand side $\tilde{f}:= (\JJ_{\R^2} f)$.
            \end{lemma}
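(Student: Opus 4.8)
The plan is to exploit that the Bloch--Floquet transform $\JJ_{\R^2}$ is an isomorphism from $X$ onto $\tilde X = L^2_{\mathrm w}(I_{}; \tilde X_\alpha)$ and, at the same time, a Plancherel-type transform preserving the relevant $L^2$ pairings. Writing $\tilde{\mathbf E} = \JJ_{\R^2}{\mathbf E}$ and $\tilde v = \JJ_{\R^2} v$, I would show that the sesquilinear form $a_q({\mathbf E}, v)$ of Problem~\ref{prob_Maxwell} (with $q=0$, so that $\varepsilon_r^{\mathrm{s}} = \varepsilon_r$) coincides termwise with the transformed form in~\eqref{eq_transVarProblem} evaluated at $(\tilde{\mathbf E}, \tilde v)$, and likewise that the two right-hand sides agree. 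Since $v \mapsto \JJ_{\R^2}v$ is a bijection between the test space $X$ and $\tilde X$, the statement ``$a_q({\mathbf E},v) = \langle f, v\rangle$ for all $v \in X$'' is then equivalent to the corresponding transformed statement for all $\tilde v \in \tilde X$, which is exactly~\eqref{eq_transVarProblem}. The equivalence thus reduces to three identities: one for the volume term, one for the boundary term, and one for the source term.

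For the volume contribution I would use two elementary commutation properties of $\JJ_{\R^2}$. First, since differentiation commutes with the periodic shifts defining $\JJ_{\R^2}$, one has $\JJ_{\R^2}(\nabla \times \phi) = \nabla \times (\JJ_{\R^2}\phi)$, the curl on the right acting in the cell variables. Second, because $\mu_r$ and $\epsilon_r$ are $2\pi$-periodic, multiplication by them commutes fibrewise with $\JJ_{\R^2}$, i.e.\ $\JJ_{\R^2}(\mu_r^{-1}\psi) = \mu_r^{-1}\JJ_{\R^2}\psi$. Combining these with the Plancherel identity for the Bloch--Floquet transform, which turns the $L^2(\Omega^R)$ pairing into $\int_{I_{}} (\cdot,\cdot)_{L^2(\Omega_0^R)}\dd\alpha$, converts $\int_{\Omega^R}\mu_r^{-1}(\nabla\times{\mathbf E})\cdot(\nabla\times\overline v) - k^2\epsilon_r{\mathbf E}\cdot\overline v \dd x$ into the first integral of~\eqref{eq_transVarProblem}; the source identity $\int_{\Omega^R} f\cdot\overline v\dd x = \int_{I_{}}\int_{\Omega_0^R}\tilde f_\alpha\cdot\overline{v_\alpha}\dd x\dd\alpha$ follows from the same Plancherel identity.

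The boundary term is where the real work lies. Here one must match the \emph{nonlocal} operators $T$ and $N$, defined through the full Fourier transform $\mathcal F$ on $\R^2$ with the (near $|\xi|=k$ singular) multipliers $\sqrt{k^2-|\xi|^2}$ for $T$ and $\xi\xi^{\top}/\sqrt{k^2-|\xi|^2}$ for $N$, with the fibre operators $T_\alpha, N_\alpha$, which use the discrete frequencies $\alpha + j$. The key observation is that on $\Gamma^R$ the Bloch--Floquet transform is exactly the decomposition of $\mathcal F$ induced by writing $\xi = \alpha + j$ with $\alpha\in I_{}$, $j\in\Z^2$: the continuous frequency integral $\int_{\R^2}\dd\xi$ splits into $\int_{I_{}}\sum_{j\in\Z^2}\dd\alpha$, and the Fourier coefficient $\widehat{({\mathbf E}_{\alpha,T})}_j$ equals $\mathcal F({\mathbf E}_T)(\alpha+j)$ up to normalisation. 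Under this identification the multipliers of $T$ and $N$ restrict to the discrete symbols of $T_\alpha$ and $N_\alpha$, so Parseval on $\Gamma^R$ yields $\int_{\Gamma^R}(N-T)({\mathbf E}_T)\cdot\overline v_T\dd S = \int_{I_{}}\int_{\Gamma_0^R}(N_\alpha-T_\alpha)(\tilde{\mathbf E}_{\alpha,T})\cdot\overline{v_{\alpha,T}}\dd S\dd\alpha$. In the same step one checks that the weighted norm defining $L^2_{\mathrm w}(I_{};\tilde X_\alpha)$ is the image of the $\hat{TH}^{\nicefrac{1}{2}}(\Gamma^R)$-norm under $\JJ_{\R^2}$, so that $\JJ_{\R^2}$ indeed maps $X$ onto $\tilde X$ boundedly with bounded inverse; this is what makes the singular $N$-term well-defined on both sides.

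I expect the main obstacle to be the rigorous decomposition of the singular Fourier multipliers near $\{|\xi|=k\}$, correspondingly the set $\mathcal A$ of critical quasi-periodicities. One has to argue that the identity of the two sesquilinear forms, and of the two norms, holds for $\alpha$ ranging over $\overline I_{}\setminus\mathcal A$ (a set of full measure, since $\mathcal A$ is a Lebesgue-null union of circles), and that the weighted integrability built into $L^2_{\mathrm w}(I_{};\tilde X_\alpha)$ is precisely the condition under which the $\alpha$-integral of the singular $N_\alpha$-pairing converges; this is the transformed counterpart of the requirement ${\mathbf E}_T\in\hat{TH}^{\nicefrac{1}{2}}(\Gamma^R)$. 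Once this measure-theoretic and integrability bookkeeping is in place, the three termwise identities combine with the bijectivity of $\JJ_{\R^2}$ on the test spaces to give the claimed ``if and only if''.
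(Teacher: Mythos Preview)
Your proposal is correct and follows essentially the same route as the paper: Plancherel for the volume and source terms, and the identification $\xi=\alpha+j$ on the boundary to match $T,N$ with $T_\alpha,N_\alpha$. The only methodological difference is in the handling of the singular multiplier of $N$ near $|\xi|=k$: where you argue abstractly via the weighted integrability built into $L^2_{\mathrm w}(I_{};\tilde X_\alpha)$, the paper makes this step concrete by introducing a cut-off $\chi_{\{||\xi|-k|\geq\eta\}}$, proving the identity for the regularised operator $\mathcal F^{-1}\bigl(\chi_{\{||\xi|-k|\geq\eta\}}\mathcal F(N({\mathbf E}_T))\bigr)$ exactly as for $T$, and then passing to the limit $\eta\to 0$.
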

            \begin{proof}
                For the operator $T$ and the term in the volume integral part the equivalence can be proven analogously to \cite[Theorem~2]{Konschin19a}.
                We choose a cut-off function $\chi_{\{ ||\xi| - k | \geq \eta\}}$ for some $\eta > 0$ and consider $\mathcal{F}^{-1}\left(\chi_{\{ ||\xi| - k | \geq \eta\}} \mathcal{F}(N({\mathbf E}_T))\right)$. Then, we can show the equivalence for the operator $N$ analogously to $T$ and let  $\eta$ go to zero.
            \end{proof}
            
            At first, we  show the uniqueness, which is a direct consequence of the unique continuation property shown in \cite{Okaji2002}. We start by citing the corresponding result.
            
            \begin{proposition}\label{proposition_uniqueContienuationMaxwell}
                Let $U \subseteq \R^3$ be some domain and the parameter $\mu_r$ and $\epsilon_r$ functions in $W^{1, \infty}(\R^3)$. If ${\mathbf E}$ solves the Maxwell's equations~\eqref{eq_Maxwell3_1} for the right-hand side $f = 0$ and vanishes on an open set, then ${\mathbf E}$ vanishes everywhere in $U$.
            \end{proposition}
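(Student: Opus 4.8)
The plan is to recognize this statement as a reformulation of the weak unique continuation property for the Maxwell system and to reduce it to the first-order setting before invoking the result of \cite{Okaji2002}.

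First I would pass from the second-order equation~\eqref{eq_Maxwell3_1} back to the first-order Maxwell system. Setting ${\mathbf H} := \tfrac{1}{\ii \omega \mu_r \mu_+} \nabla \times {\mathbf E}$, the observation following~\eqref{eq_Maxwell3_1} shows that, since $f = 0$ (equivalently $J = 0$), the pair $({\mathbf E}, {\mathbf H})$ solves the homogeneous first-order Maxwell system~\eqref{eq_Maxwell1} with vanishing right-hand side, together with the divergence constraints $\divv(\mu_r {\mathbf H}) = 0$ and $\divv(\epsilon_r {\mathbf E}) = 0$ in $U$. This places $({\mathbf E}, {\mathbf H})$ precisely in the class of curl-curl systems for which \cite{Okaji2002} establishes unique continuation.

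Next I would verify the hypotheses of the cited theorem. The coefficients $\mu_r, \epsilon_r \in W^{1,\infty}(\R^3)$ are scalar-valued (isotropic) and bounded below by $\mu_0 > 0$ and $\epsilon_0 > 0$, so the system is of the admissible type, the point being that Lipschitz regularity of the coefficients already suffices. Applying the weak unique continuation property, a solution $({\mathbf E}, {\mathbf H})$ that vanishes on a nonempty open subset of the connected open set $U$ must vanish on the whole connected component, that is, on all of $U$; in particular ${\mathbf E} \equiv 0$ in $U$.

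The main obstacle is not the reduction, which is routine, but matching the precise structural and regularity hypotheses of \cite{Okaji2002} to the divergence-curl system at hand: unique continuation for Maxwell's equations with merely Lipschitz rather than smooth or real-analytic coefficients is delicate, resting on Carleman estimates adapted to the $\curl\curl$ operator, and this is exactly why the result is cited rather than reproved. Since this is the only step in the uniqueness argument where the low regularity assumption $\mu_r, \epsilon_r \in W^{1,\infty}$ is genuinely exploited, one must ensure that the cited theorem indeed admits scalar $W^{1,\infty}$ coefficients on a general domain, after which the conclusion follows immediately by connectedness of $U$.
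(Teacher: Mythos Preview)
Your proposal is correct and follows essentially the same approach as the paper: the paper's own proof consists of a single line citing \cite[Theorem~2.3]{Okaji2002}, and your reduction to the first-order system together with the verification of the Lipschitz hypotheses is exactly the content one must check to invoke that theorem. Your write-up simply makes explicit what the paper leaves implicit in the citation.
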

            \begin{proof}
                The results is proven in \cite[Theorem~2.3]{Okaji2002}.
            \end{proof}

            Thus, we can show the uniqueness of a solution to the problem \ref{prob_Maxwell}.
            \begin{lemma}\label{lemma_eindInt}
                If the Assumption~\ref{assumption_absorptionMaxwell} holds, then there  exists at maximum one solution to the problem in the integral form~\eqref{eq_transVarProblem} for every right-hand side.
            \end{lemma}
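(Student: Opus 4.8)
The plan is to prove uniqueness by an energy estimate followed by the unique continuation property of Proposition~\ref{proposition_uniqueContienuationMaxwell}. By linearity the difference of two solutions sharing the same right-hand side solves~\eqref{eq_transVarProblem} with $\tilde{f} = 0$, so it suffices to show that $\tilde{f} = 0$ forces $\tilde{{\mathbf E}} = 0$. First I would insert the admissible test function $v = \tilde{{\mathbf E}} \in L^2_{\mathrm{w}}(I_{}; \tilde{X}_\alpha)$ into the homogeneous identity, so that the entire left-hand side of~\eqref{eq_transVarProblem} vanishes.

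Next I would take the imaginary part and analyse the sign of each contribution for fixed $\alpha$. In the volume integral the first term is $\int_{\Omega_0^R} \Im(\mu_r^{-1})\,|\nabla \times \tilde{{\mathbf E}}_\alpha|^2 \dd x \leq 0$, since $\Im(\mu_r^{-1}) = -\Im\mu_r / |\mu_r|^2 \leq 0$ by Assumption~\ref{assumption_absorptionMaxwell}, while the second term is $-k^2 \int_{\Omega_0^R} \Im\epsilon_r\,|\tilde{{\mathbf E}}_\alpha|^2 \dd x \leq 0$, since $\Im\epsilon_r \geq 0$ and $k^2 > 0$. On the boundary, the inequalities~\eqref{eq_Ninequalities} and~\eqref{eq_Tinequalities} yield $\Im\langle N_\alpha \tilde{{\mathbf E}}_{\alpha,T}, \tilde{{\mathbf E}}_{\alpha,T}\rangle \leq 0$ and $-\Im\langle T_\alpha \tilde{{\mathbf E}}_{\alpha,T}, \tilde{{\mathbf E}}_{\alpha,T}\rangle \leq 0$. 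Hence the $\alpha$-integrand is nonpositive, and since its integral over $I_{}$ vanishes, each single term must vanish for almost every $\alpha \in I_{}$; in particular $\int_{\Omega_0^R} \Im\epsilon_r\,|\tilde{{\mathbf E}}_\alpha|^2 \dd x = 0$ for a.e.\ $\alpha$.

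Since the set $\{\Im\epsilon_r > 0\}$ contains an open ball $B$ by Assumption~\ref{assumption_absorptionMaxwell}, which by periodicity of $\epsilon_r$ we may take inside the cell $\Omega_0^{R_0}$, this forces $\tilde{{\mathbf E}}_\alpha = 0$ on $B$ for almost every $\alpha$. For each such $\alpha$ I would extend $\tilde{{\mathbf E}}_\alpha$ $\alpha$-quasi-periodically to $\R^3_+$; by the quasi-periodic analogue of Lemma~\ref{lemma_rueckUndRand} this extension is a distributional solution of the homogeneous equations~\eqref{eq_Maxwell3_1} with the periodic coefficients $\mu_r, \epsilon_r \in W^{1,\infty}$. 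Applying Proposition~\ref{proposition_uniqueContienuationMaxwell} on the connected domain $\R^3_+$ then propagates the vanishing from $B$ to the whole domain, so $\tilde{{\mathbf E}}_\alpha = 0$ in $\Omega_0^R$ for a.e.\ $\alpha$ and therefore $\tilde{{\mathbf E}} = 0$. Equivalently, one could transform back and run the same energy argument for $\JJ_{\R^2}^{-1}\tilde{{\mathbf E}}$ on the strip problem~\ref{prob_Maxwell} with $q = 0$.

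The sign bookkeeping is routine once $\Im(\mu_r^{-1}) \leq 0$ and the Calderon-operator inequalities~\eqref{eq_Ninequalities},~\eqref{eq_Tinequalities} are in hand. The main obstacle is the concluding unique continuation step: one must verify that the quasi-periodic extension genuinely satisfies the homogeneous second-order Maxwell system on a connected open set containing both $B$ and all of $\Omega_0^R$, so that Proposition~\ref{proposition_uniqueContienuationMaxwell} is applicable, and that discarding the $\alpha$-nullset of exceptional quasi-periodicities does not affect the conclusion in the weighted space $L^2_{\mathrm{w}}(I_{}; \tilde{X}_\alpha)$.
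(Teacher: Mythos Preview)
Your proposal is correct and follows essentially the same route as the paper: test with $v=\tilde{{\mathbf E}}$, take the imaginary part, use the sign properties of the boundary operators together with $\Im\epsilon_r\geq 0$ to force $\tilde{{\mathbf E}}_\alpha=0$ on the absorbing ball for a.e.\ $\alpha$, and then invoke Proposition~\ref{proposition_uniqueContienuationMaxwell}. You are in fact slightly more careful than the paper in that you explicitly track the nonpositive contribution $\int\Im(\mu_r^{-1})\,|\nabla\times\tilde{{\mathbf E}}_\alpha|^2$ and spell out the quasi-periodic extension needed before applying unique continuation.
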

            \begin{proof}
                Let $\tilde{{\mathbf E}}$ be the solution to the problem~\eqref{eq_transVarProblem} for $\tilde{f} = 0$, then for all $v \in L^2_{\mathrm{w}}(I_{}; \tilde{X}_\alpha)$ it holds
                \begin{align*}
                    0
                    &= \int_{I_{}} \int_{\Omega_0^R} 
                    \mu_r^{-1} \left( \nabla \times \tilde{{\mathbf E}}_\alpha \right) \cdot \left(\nabla \times \overline{v}_\alpha \right)
                    - k^2\epsilon_r \tilde{{\mathbf E}}_\alpha \cdot \overline{v}_\alpha
                    \dd  x \dd  \alpha
                    \nonumber
                    \\ 
                    & \quad \quad
                    + \int_{I_{}}  \int_{\Gamma_0^{R}} 
                    N_\alpha(\tilde{{\mathbf E}}_{\alpha,T} )  \cdot  \overline{v}_{\alpha,T} 
                    - T_\alpha(\tilde{{\mathbf E}}_{\alpha,T} )  \cdot  \overline{v}_{\alpha,T} 
                    \dd  S \dd  \alpha
                    .
                \end{align*}
                Therefore, we can conclude using the inequalities in \eqref{eq_Tinequalities} and \eqref{eq_Ninequalities} that
                \begin{align*}
                    0
                    &= \int_{I_{}} \int_{\Omega_0^R} - k^2 (\Im \epsilon_r) |\tilde{{\mathbf E}}_\alpha|^2 \dd  x \dd  \alpha
                    + \Im \int_{I_{}}  \int_{\Gamma_0^{R}}  
                    \left(N_\alpha- T_\alpha\right)(\tilde{{\mathbf E}}_{\alpha,T} )  \cdot \overline{\tilde{{\mathbf E}}}_{\alpha,T} 
                    \dd  S\dd  \alpha
                    \\
                    &\leq \int_{I_{}} \int_{\Omega_0^R} - k^2 (\Im \epsilon_r) |\tilde{{\mathbf E}}_\alpha|^2 \dd  x \dd  \alpha
                    \leq 0
                \end{align*}
                holds, wherefrom $\tilde{{\mathbf E}}_\alpha = 0$ on $\{(\Im \epsilon_r) > 0 \}$ follows for almost all $\alpha \in I_{}$.
                Hence, we derive $\tilde{{\mathbf E}}_\alpha = 0$ on $\Omega^R_0$ for almost all $\alpha \in I_{}$ applying Proposition~\ref{proposition_uniqueContienuationMaxwell}.
            \end{proof}
            
            The variational problem~\eqref{eq_transVarProblem} is formulated with an additional integral surrounding the variational formulation.
            In this case we do not have any compact embedding results for the solution space $L^2_{\mathrm{w}}(I_{}; \tilde{X}_\alpha)$, and Fredholm theory is not applicable. For this reason, we are considering the problem pointwise in $\alpha \in \overline{I} \setminus \mathcal{A}$, for which we can decompose the solution space by the Helmholtz decomposition and derive compact embedding of the reduced problem. This will be our next step.
            
            The quasi-periodic variational problem for $\alpha \in \overline{I_{}} \setminus \mathcal{A}$ is given by:
            
            \emph{We seek $\tilde{{\mathbf E}}_\alpha \in \tilde{X}_\alpha$, such that}
            \begin{subequations}\label{eq_probWirklichPunkt}
                \begin{align}
                a_\alpha(\tilde{{\mathbf E}}_\alpha, v_\alpha)
                &:=\int_{\Omega_0^R} 
                \mu_r^{-1} \left(\nabla \times \tilde{{\mathbf E}}_\alpha \right) \cdot \left( \nabla \times\overline{ v }_\alpha \right)
                - k^2\epsilon_r \tilde{{\mathbf E}}_\alpha \cdot \overline{v}_\alpha
                \dd  x 
                \\
                &\quad\quad\quad
                +  \int_{\Gamma_0^{R}} 
                N_\alpha(\tilde{{\mathbf E}}_{\alpha,T} )  \cdot  \overline{v}_{\alpha,T} 
                -T_\alpha(\tilde{{\mathbf E}}_{\alpha,T} )  \cdot  \overline{v}_{\alpha,T} 
                \dd  S 
                \\
                &= f_\alpha(v_\alpha)
                \end{align}
            \end{subequations}
            \emph{holds for all  $v_\alpha \in \tilde{X}_\alpha$.}
            
        \subsection{Helmholtz decomposition of the solution space}
            In the following we  apply two different Helmholtz decompositions to the variational problem \eqref{eq_probWirklichPunkt}. The first decomposition is for reducing the solution space to some more regular subspace, and with the help of the  second decomposition, we include a boundary condition, which will be crucial for the decomposition of the differential operator into a coercive part and a compact perturbation.
            For the Helmholtz decomposition we consider the two following problems, where the first one is solved in $W:=\tilde{H}^1_\alpha(\Omega_0^R)$ and for the second one we seek the solution in  $W_0 := \{ w \in W : w = 0 \text{ on } \Gamma^R_0 \}$.
            We define the sesquilinear form $b^{(\varepsilon_r)}$ for all $\phi$, $v \in \tilde{X}_\alpha$ as
            \begin{equation*}
                b^{(\varepsilon_r)}(\phi, v) 
                := \int_{\Omega_0^R} k^2 \epsilon_r \phi \cdot \overline{ v} \dd  x 
                -   \int_{\Gamma_0^R}  \left( N_\alpha - T_\alpha \right)(\phi_{ T})   \cdot   \overline{v}_{ T} 
                \dd  S
                ,
            \end{equation*}
            which is well-defined for all functions in $\tilde{X}_\alpha$.
            For the first decomposition we seek a function in $W_0$. In this case the boundary terms of the sesquilinear form $b^{(\varepsilon_r)}$ can be omitted and we derive the following problem:
            \begin{lemma}\label{lemma_loesHilfs1}
                For  $F \in W_0'$ the problem 
                \begin{align*}
                    \divv \left(k^2\epsilon_r  \nabla w\right) 
                    &= -F
                    &&\text{in }
                    \Omega_0^R
                    \\
                    w &= 0
                    &&\text{on }
                    \Gamma_0^{0} \cup \Gamma_0^{R}
                \end{align*}
                has a unique solution $w \in W_0$.
            \end{lemma}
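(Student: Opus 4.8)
The plan is to recast the boundary value problem in its weak form and solve it by the Lax--Milgram theorem. Testing the equation $\divv(k^2 \epsilon_r \nabla w) = -F$ against $v \in W_0$ and integrating by parts, the boundary contributions on $\Gamma_0^{0} \cup \Gamma_0^{R}$ vanish because $v = 0$ there, while the contributions on the lateral boundary cancel by the $\alpha$-quasi-periodicity of $w$ and $v$. Hence the problem is equivalent to finding $w \in W_0$ with
\[
    B(w,v) := \int_{\Omega_0^R} k^2 \epsilon_r \nabla w \cdot \nabla \overline{v} \dd x = F(v) \qquad \text{for all } v \in W_0 .
\]

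First I would check that $B$ is a bounded sesquilinear form on $W_0 \times W_0$; this is immediate from $\epsilon_r \in W^{1,\infty}(\R^3_+) \subset L^\infty$ together with the Cauchy--Schwarz inequality, giving $|B(w,v)| \le k^2 \|\epsilon_r\|_{L^\infty} \|w\|_{W} \|v\|_{W}$. For coercivity I would invoke the lower bound $\Re \epsilon_r \ge \epsilon_0 > 0$ from Assumption~\ref{assumption_absorptionMaxwell}, which yields
\[
    \Re B(w,w) = \int_{\Omega_0^R} k^2 (\Re \epsilon_r)\, |\nabla w|^2 \dd x \ge k^2 \epsilon_0 \, \|\nabla w\|_{L^2(\Omega_0^R)}^2 .
\]

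The key remaining ingredient is a Poincaré inequality: since every $w \in W_0$ vanishes on both $\Gamma_0^{0}$ and $\Gamma_0^{R}$, integration along the $x_3$-direction for fixed $\tx$ gives $\|w\|_{L^2(\Omega_0^R)} \le c\, \|\partial_{x_3} w\|_{L^2(\Omega_0^R)} \le c\, \|\nabla w\|_{L^2(\Omega_0^R)}$, so that $\|\nabla w\|_{L^2(\Omega_0^R)}$ already controls the full norm $\|w\|_{W}$. Combining this with the real-part estimate establishes $\Re B(w,w) \ge c'\, \|w\|_{W}^2$, whence the complex version of the Lax--Milgram theorem delivers a unique $w \in W_0$ solving the weak formulation, and thus the claimed unique solution of the boundary value problem. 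The only point requiring care — and the main obstacle — is verifying that the vertical Poincaré estimate is unaffected by the quasi-periodic lateral conditions; this is handled by observing that the estimate is genuinely one-dimensional in $x_3$ and integrates over the horizontal cell $(-\pi,\pi)^2$ without any interference from the quasi-periodicity, so that the Poincaré constant depends only on $R$.
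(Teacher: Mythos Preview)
Your proof is correct and follows essentially the same route as the paper's: both pass to the weak formulation, use $\Re \epsilon_r \ge \epsilon_0$ to obtain $\Re B(w,w) \ge k^2 \epsilon_0 \|\nabla w\|_{L^2}^2$, invoke the Poincar\'e inequality (valid because $w$ vanishes on $\Gamma_0^0 \cup \Gamma_0^R$), and conclude by Lax--Milgram. Your additional remarks on boundedness, the lateral-boundary cancellation, and the one-dimensional nature of the Poincar\'e estimate are correct elaborations of points the paper leaves implicit.
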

            \begin{proof}
                The corresponding variational problem is to find a $w \in W_0$, such that
                \[
                    b^{(\varepsilon_r)} \left(\nabla w, \nabla v \right) 
                    = \int_{\Omega_0^R} F \cdot \overline{v} \dd  x  
                    \quad\text{holds for all }
                    v \in W_0
                    .
                \]
                Since the trace of $W_0$-functions vanishes on the boundary $\Gamma_0^R \cup \Gamma_0^0$, the coercivity follows by the Poincar\'e inequality together with the estimation
                \[
                    \Re b^{(\varepsilon_r)}(\nabla w, \nabla w) 
                    = \Re \int_{\Omega_0^R} k^2 \epsilon_r |\nabla { w}|^2 \dd  x 
                    \geq k^2 \varepsilon_0 \,||\nabla w||^2_{L^2(\Omega_0^R)}
                    .
                \]
                Hence, the unique existence of the solution follows by the lemma of Lax-Milgram.
            \end{proof}
            
            For the second decomposition, we seek  a function in $W$ which possesses a special boundary condition.
            \begin{lemma}\label{lemma_loesHilfs2}
                For $\alpha \in \overline{I}\setminus \mathcal{A}$ and $G \in H_\alpha^{\nicefrac{-1}{2}}(\Gamma_0^{R})$
                the boundary value problem
                \begin{align*}
                    \divv \left(k^2\epsilon_r  \nabla w\right) 
                    &= 0
                    &&\text{in }
                    \Omega_0^R
                    \\
                    \frac{\partial w}{\partial x_3} +k^{-2} \divv_T( N_\alpha - T_\alpha)\left(\nabla_T  w\right)
                    &=  k^{-2} G
                    &&\text{on }
                    \Gamma_0^{R}
                    \\
                    w
                    &=  0
                    &&\text{on }
                    \Gamma_0^{0}
                \end{align*}
                is uniquely solvable in $W$.
                If $G$ is an element of $H_\alpha^{\nicefrac{1}{2}}(\Gamma_0^{R})$, then the solution is  an element of $ H_\alpha^2(\Omega_0^R)$.
            \end{lemma}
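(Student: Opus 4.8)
The plan is to recast the boundary value problem as a coercive variational problem for the scalar potential $w$ over $W$, to solve it by the lemma of Lax--Milgram (so that existence and uniqueness come together), and to obtain the gain of regularity afterwards by a tangential difference quotient argument. First I would derive the weak formulation. Testing the equation $\divv(k^2\epsilon_r\nabla w)=0$ against $\overline v$ for $v\in W$ and integrating by parts over $\Omega_0^R$, the boundary contribution on $\Gamma_0^0$ drops out because $v|_{\Gamma_0^0}=0$ and the lateral contributions cancel by $\alpha$-quasi-periodicity, leaving
\[
0 = -\int_{\Omega_0^R}k^2\epsilon_r\,\nabla w\cdot\nabla\overline v\dd x + \int_{\Gamma_0^R}k^2\,\tfrac{\partial w}{\partial x_3}\,\overline v\dd S ,
\]
where I used that $\epsilon_r\equiv 1$ on $\Gamma_0^R$. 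Inserting the Robin-type condition $k^2\tfrac{\partial w}{\partial x_3}=G-\divv_T(N_\alpha-T_\alpha)(\nabla_T w)$ and integrating by parts tangentially on the closed surface $\Gamma_0^R$ moves $\divv_T$ onto $\overline v$, so that the problem becomes: find $w\in W$ with
\[
b^{(\varepsilon_r)}(\nabla w,\nabla v) = \int_{\Gamma_0^R}G\,\overline v\dd S\qquad\text{for all }v\in W .
\]
The right-hand side is a bounded antilinear functional on $W$ by the trace theorem and $G\in H_\alpha^{\nicefrac{-1}{2}}(\Gamma_0^R)$.

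Next I would establish continuity and coercivity of $\mathfrak b(w,v):=b^{(\varepsilon_r)}(\nabla w,\nabla v)$ on $W$. Continuity of the boundary term follows after rewriting it as $-\langle\divv_T(N_\alpha-T_\alpha)(\nabla_T w),v\rangle_{\Gamma_0^R}$ and observing that $\divv_T(N_\alpha-T_\alpha)\nabla_T$ is a tangential Fourier multiplier of order one, hence maps $H_\alpha^{\nicefrac{1}{2}}(\Gamma_0^R)\to H_\alpha^{\nicefrac{-1}{2}}(\Gamma_0^R)$. The volume part satisfies $\Re\int_{\Omega_0^R}k^2\epsilon_r|\nabla w|^2\geq k^2\varepsilon_0\|\nabla w\|_{L^2(\Omega_0^R)}^2$, which by the Poincar\'e inequality (using $w|_{\Gamma_0^0}=0$) already controls $\|w\|_{W}^2$. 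The crucial point is the sign of the boundary term: a direct appeal to \eqref{eq_Tinequalities}, \eqref{eq_Ninequalities} is not enough, since $T_\alpha$ enters with the unfavourable sign. Instead I would expand $w=\sum_{j}\hat w_j\, e^{-\ii\alpha_j\cdot\tx}$ and use that on the curl-free field $\nabla_T w$ the two operators combine into the multiplier $\sim\ii k^2|\alpha_j|^2/\beta_j$ through the identity $\tfrac{|\alpha_j|^2}{\beta_j}+\beta_j=\tfrac{k^2}{\beta_j}$, whose real part vanishes on the propagating modes $|\alpha_j|<k$ and is strictly positive $\propto \tfrac{|\alpha_j|^2}{\sqrt{|\alpha_j|^2-k^2}}\,|\hat w_j|^2$ on the evanescent modes $|\alpha_j|>k$. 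Thus $\Re\bigl[-\langle(N_\alpha-T_\alpha)\nabla_T w,\nabla_T w\rangle_{\Gamma_0^R}\bigr]\geq 0$, and $\Re\mathfrak b(w,w)\geq c\,\|w\|_{W}^2$. Here the hypothesis $\alpha\in\overline I\setminus\mathcal A$ is used decisively, since it guarantees $\beta_j=\sqrt{k^2-|\alpha_j|^2}\neq 0$ for every $j\in\Z^2$, so that $N_\alpha$, $T_\alpha$ and the above multiplier are well defined. The lemma of Lax--Milgram then yields the unique solution $w\in W$.

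Finally, for $G\in H_\alpha^{\nicefrac{1}{2}}(\Gamma_0^R)$ I would upgrade the solution to $H_\alpha^2(\Omega_0^R)$ by a Nirenberg translation (tangential difference quotient) argument. Since $\epsilon_r\in W^{1,\infty}(\R^3_+)$, the tangential difference quotients of the coefficient are uniformly bounded, the boundary multipliers commute with tangential translations, and the Dirichlet condition on $\Gamma_0^0$ is preserved; testing the variational identity with difference quotients of $w$ and exploiting the coercivity gives $\partial_{x_1}w,\partial_{x_2}w\in H^1_\alpha(\Omega_0^R)$. Rewriting the equation as $\partial_{x_3}(k^2\epsilon_r\partial_{x_3}w)=-\sum_{i=1,2}\partial_{x_i}(k^2\epsilon_r\partial_{x_i}w)\in L^2(\Omega_0^R)$ then yields $\partial_{x_3}^2 w\in L^2(\Omega_0^R)$, so that $w\in H_\alpha^2(\Omega_0^R)$. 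The main obstacle is the coercivity of the boundary term, which hinges on the cancellation available only for gradient fields and on the exclusion $\alpha\notin\mathcal A$; the regularity near $\Gamma_0^R$, where an order-one pseudodifferential operator appears, is the secondary technical difficulty resolved by the tangential-difference-quotient argument together with the $W^{1,\infty}$-regularity of $\epsilon_r$.
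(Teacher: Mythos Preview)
Your treatment of existence and uniqueness coincides with the paper's: both derive the variational identity $b^{(\varepsilon_r)}(\nabla w,\nabla v)=\langle G,v\rangle_{\Gamma_0^R}$, obtain coercivity of the volume part from $\Re\epsilon_r\geq\varepsilon_0$ together with the Poincar\'e inequality, and settle the sign of the boundary term by the Fourier-coefficient computation that, on gradient fields $\nabla_T w$, the contributions of $N_\alpha$ and $T_\alpha$ combine via $\tfrac{|\alpha_j|^2}{\beta_j}+\beta_j=\tfrac{k^2}{\beta_j}$ into a term whose real part is $k^2\sum_{|\alpha_j|>k}\tfrac{|\alpha_j|^2}{\sqrt{|\alpha_j|^2-k^2}}\,|\hat w_j|^2\geq 0$. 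Lax--Milgram then closes the argument exactly as in the paper.

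For the $H^2_\alpha$-regularity, however, you take a genuinely different route. The paper follows Grisvard's scheme: it freezes the problem to the constant-coefficient model $-\Delta v+v=0$ on the half-strip $(-\pi,\pi)^2\times(-\infty,R)$ with the Robin-type condition $\partial_{x_3}v+B_\alpha v=G$, solves it explicitly by Fourier series (an ODE in $x_3$ for each mode), reads off the $H^2$-bound in terms of $\|G\|_{H^{1/2}_\alpha}$, upgrades to a nonzero right-hand side by convolution, and then localises back to $\Omega_0^R$ with a cut-off, invoking interior regularity away from $\Gamma_0^R$. Your Nirenberg tangential-difference-quotient argument is a legitimate alternative: the Fourier-multiplier boundary operator commutes with tangential translations, $\epsilon_r\in W^{1,\infty}$ controls the commutator in the volume term, and $G\in H^{1/2}_\alpha$ bounds $D_h^iG$ in $H^{-1/2}_\alpha$ uniformly; coercivity then yields $\partial_{x_1}w,\partial_{x_2}w\in H^1_\alpha$, and the PDE recovers $\partial_{x_3}^2w\in L^2$. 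Your approach is more self-contained and avoids the explicit model solution; the paper's approach, in exchange, produces a concrete representation formula and a direct route to the a priori estimate near $\Gamma_0^R$.
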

            \begin{proof}
                Since the functions are $\alpha$-quasi-periodic on the boundary, it holds the identity $\hat{(\nabla_T w)}_j = -\ii\alpha_j \,\hat{w}_j$ for the  $j$th Fourier coefficient $\hat{w}_j$ of $w \in W$. Thus, it holds
                \begin{subequations}\label{eq_nablaTw}
                    \begin{align*}
                        k^{2}\Re \langle B_\alpha w, w \rangle :=
                        &-   \Re    \int_{\Gamma_0^R}  \left( N_\alpha - T_\alpha \right)(\nabla_T  w)  \cdot \nabla_T  \overline{w} 
                        \dd  S
                        \\
                        &= \sum_{|\alpha_j| > k} \frac{1}{\sqrt{|\alpha_j|^2 - k^2 }} \left( |\alpha_j|^4 |\hat{w}_j|^2  + (k^2 - |\alpha_j|^2 ) |\alpha_j|^2 |\hat{w}_j|^2  \right)
                        \\
                        &=k^2 \sum_{|\alpha_j| > k} \frac{|\alpha_j|^2}{|k^2 - |\alpha_j|^2|^{\nicefrac{1}{2}}}\, |\hat{w}_j|^2 \geq 0
                        .
                    \end{align*}
                \end{subequations}
                Therefore, the existence of the unique solution follows analogously to Lemma~\ref{lemma_loesHilfs1}.
                
                The second part of the statement regards the regularity of the solution can be proven analogously to Section~2.3 in \cite{Grisv1985}, which argumentation we  sketch here.
                Following the argumentation in \cite{Grisv1985}, it is sufficient to show 
                the estimation
                \[
                    ||v||_{H_\alpha^2(\Omega)} \leq C
                    \left( 
                    ||v||_{H_\alpha^1(\Omega)} + ||G ||_{H_\alpha^{\nicefrac{1}{2}}(\Omega)}
                    \right)
                \]
                for the solution $v \in H_\alpha^2(\Omega)$, $\Omega := (-\pi, \pi)^2 \times (-\infty, R)$, of the problem
                \begin{subequations}\label{eq_RegW}
                    \begin{align}
                        -\Delta v  + v 
                        &= 0
                        &&\text{in }
                        \Omega
                        \label{eq_RegWa}
                        \\
                        \frac{\partial v  }{\partial x_3} + B_\alpha v  
                        &= G
                        &&\text{on }
                        \Gamma_0^R
                        .
                    \end{align}
                \end{subequations}
                To construct the solution to this problem, one can solve an ordinary differential equation for the Fourier coefficients and derive the solution
                \[
                    v 
                    = \sum_{j \in \Z^{2} }\frac{\hat{G}_j \sqrt{k^2-|\alpha_j|^2} }{\sqrt{k^2-|\alpha_j|^2}\sqrt{1+|\alpha_j|^2} + {\ii  |\alpha_j|^2}} \, e^{-\ii \alpha_j \cdot \tx  +\sqrt{1+|\alpha_j|^2} (x_3-R)}
                    \in H_\alpha^2(\Omega)
                    ,
                \]
                where the $H_\alpha^2(\Omega)$ norm can be estimated by the $H_\alpha^{\nicefrac{1}{2}}(\Gamma^R_0)$ norm of $G$ analogously to \cite[Lemma~2.2]{Chand2005}.
                Using a convolution with the solution operator for the problem 
                $-\Delta v  + v = f$ in $(-\pi, \pi)^2 \times  \R$, we derive the estimation of the $H^2(\Omega)$ norm of the solution to the problem with an additional right-hand side $f \in L^2(\Omega)$ in \eqref{eq_RegW} (compare \cite[Lemma~2.3.2.4]{Grisv1985}). 
                
                Now, we split the domain $\Omega_0^R$ into  $\Omega_0^{R-\delta} \cup D$, where $D:= \Omega_0^{R} \setminus \overline{\Omega_0^{R-2\delta} }$ for a sufficient small $\delta > 0$.
                By \cite[Theorem~4.18]{McLea2000}), we conclude that $w$ is $H^2$-regular on  $\Omega_0^{R-\delta}$. Choosing a cut-off function $\chi \in C^{\infty}(-\infty, R)$ with $\chi(x_3) = 0$ for $|x_3-R| \geq 2\delta$ and $\chi(x_3) = 1$ for $|x_3-R| \leq \delta$, we can extend the function $\chi w$  with zero to $\Omega$. The function $\chi w$ solves $-\Delta (\chi w) +\chi w  = f \in L^2(D)$ with $G$ as the boundary condition. Thus, we can conclude the $H^2$-regularity of the solution $w$ in  $\Omega_0^R \setminus \overline{\Omega_0^{R-\delta}}$, and in consequence, in the whole domain $\Omega_0^R$.
            \end{proof}
            
            Since the permittivity $\varepsilon_r$ is constant near the boundary with $\varepsilon_r=1$, we can define the trace $u_3$ on $\Gamma_0^{R}$ for every function $u \in \tilde{X}_\alpha$, which fulfills $\divv (\varepsilon_r u) = 0$ in $\Omega_0^R$, and it holds $u_3 \in H^{\nicefrac{-1}{2}}(\Gamma_0^{R})$. Therefore, the four spaces
            \begin{equation*}
                \tilde{Y}_\alpha
                := \left\{ u \in \tilde{X}_\alpha : \divv(\epsilon_r u) = 0
                \text{ in }
                \Omega^R_0
                \right\}
                \quad\text{and}\quad
                \tilde{Y}_\alpha^{\perp} 
                := \left\{ u \in \tilde{X}_\alpha : u = \nabla w,\ w \in W_0
                \right\}
            \end{equation*}
            as well as
            \begin{equation*}
                {Y}_\alpha
                := \left\{ u \in \tilde{Y}_\alpha : k^2{u}_{\alpha,3} = -\divv_T \left[ ( N_\alpha - T_\alpha) u_{\alpha, T} \right]
                \in H^{\nicefrac{-1}{2}}(\Gamma_0^{R}) 
                \right\}
            \end{equation*}
            and
            \begin{equation*}
                {Y}_\alpha^{\perp} 
                := \left\{ u \in \tilde{Y}_\alpha : u = \nabla w,\ w \in W
                \right\}
            \end{equation*}
            are well-defined.
            \begin{lemma}\label{lemma_ZerlegungAlpha}
                The subspaces $\tilde{Y}_\alpha$ and $\tilde{Y}_\alpha^{\perp}$ of $\tilde{X}_\alpha$ are closed and  $\tilde{X}_\alpha$ can be decomposed into
                $
                \tilde{X}_\alpha
                = \tilde{Y}_\alpha \oplus \tilde{Y}_\alpha^{\perp}
                .
                $
            \end{lemma}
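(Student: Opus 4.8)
The plan is to read off the splitting from the elliptic solvability of Lemma~\ref{lemma_loesHilfs1}, which supplies the gradient part, and then to verify separately that both summands are closed and meet only in $\{0\}$. The only analytic input I expect to need is the coercivity estimate $\Re b^{(\varepsilon_r)}(\nabla w,\nabla w)\ge k^2\varepsilon_0\,\|\nabla w\|_{L^2(\Omega_0^R)}^2$ established inside the proof of Lemma~\ref{lemma_loesHilfs1}, together with the observation that the boundary term of $b^{(\varepsilon_r)}$ drops whenever one of the arguments is a gradient of a $W_0$-function, since such functions vanish on $\Gamma_0^R$.

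For \emph{surjectivity} of the sum, given $u\in\tilde X_\alpha$ I would introduce the functional
\[
    F(v):=\int_{\Omega_0^R} k^2\,\varepsilon_r\, u\cdot\nabla\overline v\,\dd x,\qquad v\in W_0,
\]
which lies in $W_0'$ because $|F(v)|\le k^2\|\varepsilon_r\|_{L^\infty}\|u\|_{L^2(\Omega_0^R)}\|\nabla v\|_{L^2(\Omega_0^R)}$. Lemma~\ref{lemma_loesHilfs1} then produces a unique $w\in W_0$ with $b^{(\varepsilon_r)}(\nabla w,\nabla v)=F(v)$ for all $v\in W_0$, and since the surface part of $b^{(\varepsilon_r)}$ vanishes on $W_0$ this is exactly $\int_{\Omega_0^R}\varepsilon_r(u-\nabla w)\cdot\nabla\overline v\,\dd x=0$ for all $v\in W_0$. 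Putting $u_0:=u-\nabla w$ and testing against $C_0^\infty(\Omega_0^R)\subset W_0$ gives $\divv(\varepsilon_r u_0)=0$ in $\Omega_0^R$. It then remains to check that $\nabla w\in\tilde X_\alpha$, so that $u_0\in\tilde X_\alpha$ and hence $u_0\in\tilde Y_\alpha$, while $\nabla w\in\tilde Y_\alpha^{\perp}$ by definition. Here I would use that $\nabla w\in H_\alpha(\curl;\Omega_0^R)$ with $\curl\nabla w=0$, and that $w=0$ on $\Gamma_0^0\cup\Gamma_0^R$ forces the tangential trace $(\nabla w)_T=\nabla_T(w|_{\Gamma})=0$ on both horizontal faces; in particular the trace lies trivially in $TH_\alpha^{\nicefrac{1}{2}}(\Gamma_0^R)$.

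For \emph{directness}, take $u\in\tilde Y_\alpha\cap\tilde Y_\alpha^{\perp}$, so $u=\nabla w$ with $w\in W_0$ and $\divv(\varepsilon_r\nabla w)=0$ in $\Omega_0^R$. Since $\varepsilon_r\nabla w\in H(\divv;\Omega_0^R)$ with vanishing divergence and $w$ vanishes on $\Gamma_0^0\cup\Gamma_0^R$ (the lateral contributions cancelling by $\alpha$-quasi-periodicity), integration by parts makes testing against $v=w$ licit and yields $\int_{\Omega_0^R}\varepsilon_r|\nabla w|^2\,\dd x=0$; taking real parts and using coercivity gives $k^2\varepsilon_0\|\nabla w\|_{L^2(\Omega_0^R)}^2\le 0$, hence $\nabla w=0$ and, by the Poincar\'e inequality on $W_0$, $w=0$. \emph{Closedness} of $\tilde Y_\alpha$ is then immediate, being the kernel of the bounded map $\tilde X_\alpha\to W_0'$, $u\mapsto\bigl(v\mapsto\int_{\Omega_0^R}\varepsilon_r u\cdot\nabla\overline v\,\dd x\bigr)$. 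For $\tilde Y_\alpha^{\perp}$ I would note that on gradients $\|\nabla w\|_{\tilde X_\alpha}=\|\nabla w\|_{L^2(\Omega_0^R)}$, because both the curl and the tangential trace vanish; by Poincar\'e the gradient map $\nabla\colon W_0\to\tilde X_\alpha$ is bounded below and hence an isomorphism onto its range, which is therefore closed since $W_0$ is complete. With both summands closed, the sum direct, and spanning $\tilde X_\alpha$, the open mapping theorem upgrades the decomposition to a topological direct sum.

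The main obstacle is not the functional analysis but the careful bookkeeping of the trace and boundary conditions: one must verify that $\nabla w$ genuinely belongs to $\tilde X_\alpha$ (the $TH_\alpha^{\nicefrac{1}{2}}$-trace condition and the vanishing on $\Gamma_0^0$), and that the distributional statement $\divv(\varepsilon_r u_0)=0$ obtained from interior test functions really characterises membership in $\tilde Y_\alpha$ and can be paired with all of $W_0$ — which is where the $H(\divv)$ integration-by-parts and the cancellation of the lateral quasi-periodic boundary terms enter.
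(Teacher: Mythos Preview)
Your proposal is correct and follows essentially the same route as the paper: construct the gradient part via Lemma~\ref{lemma_loesHilfs1}, identify $\tilde{Y}_\alpha$ as the kernel of the bounded map $u\mapsto b^{(\varepsilon_r)}(u,\nabla\cdot)\in W_0'$, and read directness off the uniqueness/coercivity in Lemma~\ref{lemma_loesHilfs1}. Your additional bookkeeping on the traces of $\nabla w$ is more explicit than the paper's treatment, but the argument is the same.
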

            \begin{proof}
                (i) Closeness of $\tilde{Y}_\alpha^{\perp}$: 
                Let $\{ \nabla \tilde{w}_n \}_{n \in \N}$ be a Cauchy sequence in $\tilde{Y}_\alpha^{\perp}$. For $\nabla v \in \tilde{Y}_\alpha^{\perp} $ the norm of $\tilde{X}_\alpha$ reduces to 
                $
                    ||\nabla v ||_{L^2(\Omega_0^R)} 
                    .
                $
                Hence, the sequence $\{  \tilde{w}_n \}_{n \in \N}$ is a Cauchy sequence in the closed space $W_0$ and possesses the limit $\tilde{w} \in W_0$.
                The norm equivalence implies that the sequence $\{ \nabla \tilde{w}_n \}_{n \in \N}$ convergence in $\tilde{X}_\alpha$ against $ \nabla \tilde{w}$ for $n \to \infty$, which is an element of $\tilde{Y}_\alpha^{\perp}$.
                
                (ii) Closeness of $\tilde{Y}_\alpha$: 
                We  show that $\tilde{Y}_\alpha$ is the null space of the operator $\tilde{P} \in \mathcal{L}(\tilde{X}_\alpha, W_0')$, $\tilde{P}\colon \tilde{u} \mapsto b^{(\varepsilon_r)}(\tilde{u}, \nabla  \cdot)$, which implies the  closeness.
                Obviously the   $\tilde{Y}_\alpha$ is a subspace of the null space $\mathcal{N} (\tilde{P})$. Hence, let $\tilde{u} \in \mathcal{N} (\tilde{P})$ be some function of the null space, then for all $\phi \in C^\infty_0 (\Omega_0^R)$ it holds $\nabla \phi \in X_\alpha$ and 
                \[
                    0
                    = b^{(\varepsilon_r)}(\tilde{u}, \nabla \phi)
                    = \int_{\Omega_0^R}  k^2 \varepsilon_r \tilde{u}\cdot \nabla \overline{\phi} \dd x
                    .
                \]
                Therefore, it holds $\divv (\varepsilon_r \tilde{u}) = 0$ in the distributional sense and we conclude $\tilde{u} \in \tilde{Y}_\alpha^\perp$.
                
                (iii) Decomposition of $\tilde{X}_\alpha$: 
                We choose ${\mathbf E} \in \tilde{X}_\alpha$, then there exists the unique solution $\tilde{w} \in W_0$ of the variational problem 
                \[
                    b^{(\varepsilon_r)}(\nabla \tilde{w},\nabla v)
                    = b^{(\varepsilon_r)}({\mathbf E},\nabla  v)
                    \quad\text{for all }
                    v \in W_0
                \]
                by Lemma~\ref{lemma_loesHilfs1}.
                For the function $\tilde{u} := {\mathbf E} - \nabla \tilde{w} \in \tilde{X}_\alpha$, it holds $\tilde{u}_T \in TH^{\nicefrac{1}{2}}(\Gamma_0^R)$ and
                \begin{equation*}
                    0
                    = b^{(\varepsilon_r)}(\tilde{u}, \nabla  v)
                    = \int_{\Omega_0^R} k^2 \epsilon_r \tilde{u} \cdot \nabla \overline{v}\dd x
                \end{equation*}
                for all $v \in W_0$. 
                Thus, the function $\tilde{u}$ is an element of $\tilde{Y}_\alpha$.
                
                (iv) Uniqueness of the decomposition: 
                Let $\tilde{u} = \nabla \tilde{w}$ be in $ \tilde{Y}_\alpha \cap \tilde{Y}_\alpha^{\perp}$. We choose $F = \divv (\epsilon_r \tilde{u}) = 0$, such that Lemma~\ref{lemma_loesHilfs1} implicates $\tilde{w} = 0$. Consequently, we conclude  $\tilde{u} = 0$.
            \end{proof}
            
            For $\varepsilon_r \in W^{1, \infty} (\Omega_0^R)$ and $\tilde{u} \in \tilde{Y}_\alpha$ it holds the estimation
            \begin{equation}\label{eq_divLeqU}
                || \divv \tilde{u}||_{L^2(\Omega_0^R)}
                \leq \left|\left| \frac{\nabla \varepsilon}{\varepsilon}\right|\right|_{L^\infty(\Omega_0^R)} || \tilde{u}||_{L^2(\Omega_0^R)} 
                \leq \frac{||\varepsilon_r ||_{W^{1,\infty}(\Omega_0^R)}}{\varepsilon_0} \, || \tilde{u}||_{L^2(\Omega_0^R)} 
                ,
            \end{equation}
            and hence, it follows by Theorem~\ref{satz_regGebiet} that $\tilde{Y}_\alpha$ is a subspace of $\tilde{H}_\alpha^1(\Omega^R)^3$ and the norms of $\tilde{X}_\alpha$ and of $H^1(\Omega_0^R)^3$ are equivalent on  $\tilde{Y}_\alpha$. 
            In the next step we apply a second Helmholtz decomposition to $\tilde{Y}_\alpha$ to get an additional boundary condition into the solution space.
            
            \begin{lemma}\label{lemma_ZerlegungAlpha2}
                Choose $\varepsilon_r \in W^{1, \infty} (\Omega_0^R)$, then the subspaces ${Y}_\alpha$ and ${Y}_\alpha^{\perp}$ of $\tilde{Y}_\alpha$ are closed and the space $\tilde{Y}_\alpha$ can be decomposed into
                $
                \tilde{Y}_\alpha
                = {Y}_\alpha \oplus {Y}_\alpha^{\perp}
                .
                $
            \end{lemma}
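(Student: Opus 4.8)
The plan is to mirror the construction of the first Helmholtz decomposition in Lemma~\ref{lemma_ZerlegungAlpha}, now solving the auxiliary problem in $W$ rather than in $W_0$ and using Lemma~\ref{lemma_loesHilfs2} in place of Lemma~\ref{lemma_loesHilfs1}. Since $\tilde{Y}_\alpha$ is already known to be a subspace of $\tilde{H}^1_\alpha(\Omega_0^R)^3$ with equivalent norms (see the estimate~\eqref{eq_divLeqU} together with Theorem~\ref{satz_regGebiet}), every $u \in \tilde{Y}_\alpha$ has well-defined traces $u_T \in TH^{\nicefrac12}_\alpha(\Gamma_0^R)$ and $u_3 \in H^{\nicefrac12}_\alpha(\Gamma_0^R)$. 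I would introduce the operator $P \colon \tilde{Y}_\alpha \to W'$, $Pu := b^{(\varepsilon_r)}(u, \nabla\,\cdot\,)$, and show that $Y_\alpha = \ker P$. Indeed, integrating by parts and using $\divv(\varepsilon_r u) = 0$ together with $\varepsilon_r \equiv 1$ near $\Gamma_0^R$ (the lateral contributions cancel by quasi-periodicity and $v=0$ on $\Gamma_0^0$), one finds for $v \in W$ that
\[
   b^{(\varepsilon_r)}(u, \nabla v) = \langle G(u), v \rangle_{\Gamma_0^R}, \qquad G(u) := k^2 u_3 + \divv_T\!\big[(N_\alpha - T_\alpha) u_T\big],
\]
so that $Pu = 0$ is equivalent to $G(u) = 0$, i.e.\ to $u \in Y_\alpha$.

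The hard part will be to show that $P$ is in fact bounded, equivalently that $G(u) \in H^{\nicefrac{-1}{2}}_\alpha(\Gamma_0^R)$ for every $u \in \tilde{Y}_\alpha$; a naive count only places $\divv_T[(N_\alpha - T_\alpha) u_T]$ in $H^{\nicefrac{-3}{2}}_\alpha$. The gain comes from an algebraic cancellation in the Fourier symbols. Writing $\beta_j := \sqrt{k^2 - |\alpha_j|^2}$ and $a_j := \widehat{(u_T)}_j$, a direct computation of the coefficients shows that the two order-two contributions of $N_\alpha$ and $T_\alpha$ combine through the identity $\tfrac{|\alpha_j|^2}{\beta_j} + \beta_j = \tfrac{|\alpha_j|^2 + \beta_j^2}{\beta_j} = \tfrac{k^2}{\beta_j}$, so that the $j$-th coefficient of $\divv_T[(N_\alpha - T_\alpha) u_T]$ is a constant multiple of $\tfrac{k^2}{\beta_j}(\alpha_j \cdot a_j)$. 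For $\alpha \in \overline{I} \setminus \mathcal{A}$ one has $\beta_j \neq 0$ for all $j$ and $|\beta_j|^{-1} \leq C_\alpha (1+|\alpha_j|)^{-1}$, whence $|c_j| \leq C_\alpha |a_j|$ and therefore $\divv_T[(N_\alpha - T_\alpha) u_T] \in H^{\nicefrac12}_\alpha(\Gamma_0^R)$ with norm controlled by $\|u_T\|_{H^{\nicefrac12}_\alpha}$. Combined with $u_3 \in H^{\nicefrac12}_\alpha$ this yields $G(u) \in H^{\nicefrac12}_\alpha \subset H^{\nicefrac{-1}{2}}_\alpha$ and $\|G(u)\|_{H^{\nicefrac{-1}{2}}_\alpha} \leq C \|u\|_{\tilde{Y}_\alpha}$. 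This simultaneously proves that $Y_\alpha = \ker P$ is closed.

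With this estimate in hand the remaining steps are routine. For the decomposition I would fix $u \in \tilde{Y}_\alpha$ and apply Lemma~\ref{lemma_loesHilfs2} with datum $G(u) \in H^{\nicefrac{-1}{2}}_\alpha(\Gamma_0^R)$ to obtain a unique $w \in W$ with $b^{(\varepsilon_r)}(\nabla w, \nabla v) = \langle G(u), v \rangle = b^{(\varepsilon_r)}(u, \nabla v)$ for all $v \in W$; since $G(u) \in H^{\nicefrac12}_\alpha$, the regularity part of Lemma~\ref{lemma_loesHilfs2} even gives $w \in H^2_\alpha(\Omega_0^R)$, so $\nabla w \in \tilde{H}^1_\alpha$. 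Testing with $v \in C_0^\infty(\Omega_0^R)$, for which the boundary terms of $b^{(\varepsilon_r)}$ vanish, shows $\divv(\varepsilon_r \nabla w) = 0$, whence $\nabla w \in Y_\alpha^{\perp}$; and $b^{(\varepsilon_r)}(u - \nabla w, \nabla v) = 0$ for all $v \in W$ means $G(u - \nabla w) = 0$, i.e.\ $u - \nabla w \in Y_\alpha$. Directness follows from the coercivity of $b^{(\varepsilon_r)}(\nabla\,\cdot\,, \nabla\,\cdot\,)$ on $W$ established in Lemma~\ref{lemma_loesHilfs2}: if $\nabla w \in Y_\alpha \cap Y_\alpha^{\perp}$, then $b^{(\varepsilon_r)}(\nabla w, \nabla w) = \langle G(\nabla w), w \rangle = 0$, forcing $\nabla w = 0$.

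Finally, closeness of $Y_\alpha^{\perp}$ follows as in part (i) of the proof of Lemma~\ref{lemma_ZerlegungAlpha}: on gradients the $\tilde{Y}_\alpha$-norm is equivalent to the $H^1(\Omega_0^R)^3$-norm, so a Cauchy sequence $\{\nabla w_n\}$ converges to some $V \in \tilde{Y}_\alpha$ with $\curl V = 0$; by simple connectedness of $\Omega_0^R$ we have $V = \nabla w$, and normalizing the additive constant so that $w = 0$ on $\Gamma_0^0$ (possible since $V_T|_{\Gamma_0^0} = 0$) gives $w \in W$ with $\divv(\varepsilon_r \nabla w) = \divv(\varepsilon_r V) = 0$, i.e.\ $\nabla w \in Y_\alpha^{\perp}$. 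The single genuinely delicate point throughout is the order-lowering cancellation of the preceding paragraph, which is what makes $G$ act into $H^{\nicefrac{-1}{2}}_\alpha$ and thus renders both the kernel characterization and the applicability of Lemma~\ref{lemma_loesHilfs2} legitimate.
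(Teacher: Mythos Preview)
Your proposal is correct and follows essentially the same route as the paper: introduce $P\colon \tilde{Y}_\alpha \to W'$, identify $Y_\alpha=\ker P$ via the Fourier cancellation $\tfrac{|\alpha_j|^2}{\beta_j}+\beta_j=\tfrac{k^2}{\beta_j}$ (which the paper also uses to place $G(u)$ first in $H^{-1/2}_\alpha$ and then in $H^{1/2}_\alpha$), and apply Lemma~\ref{lemma_loesHilfs2} both for the splitting and for directness. The only deviation is in the closedness of $Y_\alpha^{\perp}$, where the paper argues directly that $\{w_n\}$ is Cauchy in $\{u\in W:\nabla_T u\in TH^{1/2}_\alpha(\Gamma_0^R)\}$; your simple-connectedness argument needs an extra word to guarantee that the potential $w$ is $\alpha$-quasi-periodic, but this is easily supplied (e.g., by the same Poincar\'e/Cauchy argument for $\{w_n\}$).
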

            \begin{proof}
                (i) The argumentation for the closeness of $Y_\alpha^\perp$ is analogously to the proof of Lemma~\ref{lemma_ZerlegungAlpha}, with the only difference that the $\tilde{X}_\alpha$ norm for  $\nabla v \in {Y}_\alpha^{\perp}$ reduces to
                \[
                    ||\nabla v ||^2_{\tilde{X}_\alpha}
                    =||\nabla v ||^2_{L^2(\Omega_0^R)} + || \nabla_T v ||^2_{TH^{\nicefrac{1}{2}}(\Gamma_0^R)} 
                \]
                and the Cauchy sequence converges in $\{ u \in W : \nabla_T u \in TH_\alpha^{\nicefrac{1}{2}}(\Gamma_0^{R}) \}$.
                
                (ii) We define the operator $P \in \mathcal{L}(\tilde{Y}_\alpha, W')$ as $P\colon u \mapsto b^{(\varepsilon_r)}(u, \nabla  \cdot)$ and show 
                $Y_\alpha = \mathcal{N}(P)$.
                For $u \in \mathcal{N}(P)$ it holds $\divv (\varepsilon_r u ) = 0$ and $u_3 \in H^{\nicefrac{1}{2}}(\Gamma^R_0)$ is well-defined.
                Choosing some $\psi \in C^\infty_0 (\R^3)$ which does not vanish on the boundary,  we can apply the Gaussian theorem and derive
                \[
                    0
                    = b^{(\varepsilon_r)}(u, \nabla \psi)
                    = \int_{\Gamma_0^R} k^2 u_{ 3}\, \overline{\psi} - ( N_\alpha - T_\alpha) u_{ T} \cdot \nabla_T \overline{\psi} \dd S
                    .
                \]
                Because of the arbitrary choice of $\psi \in C^\infty_0 (\R^3)$ the function $u$ has to be an element of  ${Y}_\alpha$.
                
                (iii) Decomposition of $\tilde{Y}_\alpha$: 
                We choose $\tilde{u} \in \tilde{Y}_\alpha$ and the right-hand side $G := k^2\tilde{u}_3 + \divv_T \left[ ( N_\alpha - T_\alpha) \tilde{u}_{T} \right]$. Then $G \in H_{\alpha}^{\nicefrac{-1}{2}}(\Gamma_0^R)$, since for all $\phi \in H_{\alpha}^{\nicefrac{1}{2}}(\Gamma_0^R)$ it holds
                \begin{align*}
                    & \left| \int_{\Gamma_0^R}  \left( N_\alpha - T_\alpha \right)(\tilde{u}_T  w)  \cdot \nabla_T  \overline{\phi} \right|
                    \dd  S
                    \\
                    &\leq  C \sum_{j \in \Z^2} \left(1+|j|^2\right)^{\nicefrac{-1}{2}} \left|  (\alpha_j \cdot \tilde{u}_T) \,|\alpha_j|^2\, \overline{\phi}  + (k^2 - |\alpha_j|^2 ) (\alpha_j \cdot \tilde{u}_T)\, \overline{\phi} \right|
                    \\
                    &\leq C ||\divv_T \tilde{u}_T||_{H_\alpha^{\nicefrac{-3}{2}}(\Gamma_0^R)} ||\phi||_{H_\alpha^{\nicefrac{1}{2}}(\Gamma_0^R)}
                    \\
                    &\leq C ||\tilde{u}_T||_{TH_\alpha^{\nicefrac{-1}{2}}(\Gamma_0^R)} ||\phi||_{H_\alpha^{\nicefrac{1}{2}}(\Gamma_0^R)}
                    .
                \end{align*}
                In Lemma~\ref{lemma_loesHilfs2} we showed that there exists a unique solution $w \in W$ to the variational problem
                \[
                    b^{(\varepsilon_r)}(\nabla w,\nabla v)
                    = b^{(\varepsilon_r)}(\tilde{u},\nabla  v)
                    \quad\text{for all }
                    v \in W
                    .
                \]
                Using the assumption $\varepsilon_r \in W^{1, \infty}(\Omega^R)$ and the estimation~\eqref{eq_divLeqU}, we have the inclusion $\tilde{Y}_\alpha \subseteq \tilde{H}^1_\alpha(\Omega_0^R)^3$.
                Therefore, the right-hand side $G$ is actually an element of $ H_\alpha^{\nicefrac{1}{2}}(\Gamma^R_0)$ and by Lemma~\ref{lemma_loesHilfs2}, we derive $w \in H^2(\Omega_0^R)$.
                In particular, the function $u := \tilde{u} - \nabla w \in \tilde{Y}_\alpha$ satisfies
                \begin{equation*}
                    0
                    = b^{(\varepsilon_r)}(u, \nabla  v)
                    = \int_{\Omega_0^R} k^2 \epsilon_r u \cdot \nabla \overline{v}\dd x
                    -   \int_{\Gamma_0^R}  \left( N_\alpha - T_\alpha \right)(\tilde{u}_{T}) \cdot \nabla_T \overline{v} 
                    \dd  S
                \end{equation*}
                for all $v \in W$, and in consequence, $u \in Y_\alpha$.
                
                (iv) The uniqueness of the decomposition follows by Lemma~\ref{lemma_loesHilfs2}, if we choose $G= k^{2} u_{3} + \divv_T( N_\alpha - T_\alpha)(u_{T}) = 0$ for $u = \nabla w \in Y_\alpha \cap Y_\alpha^{\perp}$.
            \end{proof}
            
            Applying both Helmholtz decompositions to our variational problem, we can split a function $\tilde{X}_\alpha$ into three unique functions of $Y_\alpha$,  $Y_\alpha^\perp$ and $\tilde{Y}_\alpha^{\perp}$.
            Let ${\mathbf E} = u_{\mathbf E} + \nabla w_{\mathbf E} + \nabla \tilde{w}_{\mathbf E} \in \tilde{X}_\alpha$ be the solution to problem \eqref{eq_probWirklichPunkt} and let $v = u_v + \nabla w_v+ \nabla \tilde{w}_v\in \tilde{X}_\alpha$ be a test function, where $u_{\mathbf E}, u_v \in Y_\alpha$, $\nabla w_u, \nabla w_v \in Y_\alpha^{\perp}$ and $\nabla \tilde{w}_u, \nabla \tilde{w}_v \in \tilde{Y}_\alpha^{\perp}$.
            We conclude by Lemma~\ref{lemma_loesHilfs1} and by Lemma~\ref{lemma_loesHilfs2} that $w_{\mathbf E} \in W$ and $\tilde{w}_{\mathbf E} \in W_0$ are the unique solutions to
            \begin{equation*}
                b^{(\varepsilon_r)}(\nabla w_{\mathbf E}, \nabla  w_v) 
                = \int_{\Omega_0^R} {f}_\alpha \cdot \nabla w_v \dd  x
                ,\quad
                b^{(\varepsilon_r)}(\nabla \tilde{w}_{\mathbf E}, \nabla  \tilde{w}_v) 
                = \int_{\Omega_0^R} {f}_\alpha \cdot \nabla \tilde{w}_v \dd  x
            \end{equation*}
            for all $w_v \in W$ and $\tilde{w}_v \in W_0$.
            Therefore, the problem can be reduced to:

            \emph{We seek $u_{\mathbf E} \in Y_\alpha$, such that }
            \begin{equation}\label{eq_redProb}
                a_\alpha(u_{\mathbf E}, u_v)
                = g_\alpha(u_v)
                \quad\text{\emph{holds for all }}
                u_v \in Y_\alpha
                ,
            \end{equation}
            where the right-hand side is given by 
            \begin{align*}
                g_\alpha(u_v)
                &
                := \int_{\Omega_0^R} {f}_\alpha \cdot \overline{u}_v \dd  x - a_\alpha(\nabla w_{\mathbf E}, u_v) - a_\alpha(\nabla \tilde{w}_{\mathbf E}, u_v)
                \\
                &
                = \int_{\Omega_0^R} \left({f}_\alpha+ k^2  \epsilon_r \nabla (w_{\mathbf E}+\tilde{w}_{\mathbf E}) \right) \cdot \overline{u}_v \dd  x 
                -  \int_{\Gamma_0^{R}}  
                          \left( N_\alpha - T_\alpha \right)(\nabla_T w_{\mathbf E})  \cdot \overline{u}_v
                    \dd  S
                .
            \end{align*}

        \subsection{Unique existence of the solution to the quasi-periodic problem}
            In this section we consider the reduced quasi-periodic variational problem and show the unique existence of the solution.
            Let $\alpha \in \overline{I_{}} \setminus \mathcal{A}$ be fixed and choose a sufficient large $\rho > 0$. We define the sesquilinear form  $a^\rho_\alpha$ for all $u_\alpha$, $v_\alpha \in {Y}_\alpha$ as
            \begin{align*}
                a^\rho_\alpha(u_\alpha,v_\alpha)
                &:= \int_{\Omega_0^R} 
                \mu_r^{-1} \left(\nabla \times u_\alpha \right) \cdot \left( \nabla \times \overline{v }_\alpha \right)
                +  \rho\, u_\alpha \cdot \overline{v}_\alpha
                \dd  x 
                \\
                & \quad \quad \quad \quad 
                +
                \int_{\Gamma_0^{R}} 
                N_\alpha(u_{\alpha,T} )  \cdot  \overline{v}_{\alpha,T} 
                -T_\alpha(u_{\alpha,T} )  \cdot  \overline{v}_{\alpha,T} 
                \dd  S
                \\ 
                & \quad \quad \quad \quad 
                +
                C(k^2, \alpha) 
                \sum_{j \in \Z^2} (1+|\alpha_j|^2)^{\nicefrac{-1}{2}}\, \widehat{(u_{\alpha,T})}_j \cdot  \overline{\widehat{(v_{\alpha,T})}_j}
                ,
            \end{align*}
            where the constant $C(k^2, \alpha) $ is given by
            \[
                C(k^2, \alpha) 
                := \frac{k^2}{2 \pi}\, \sup_{j \in \Z^2 } \frac{(1 + |\alpha_j|^2)^{\nicefrac{1}{2}}}{|k^2 - |\alpha_j|^2|^{\nicefrac{1}{2}}}
                .
            \]
            Thus, we can write the sesquilinear form $a_\alpha $ as
            \begin{align*}
                a_\alpha(u_\alpha,v_\alpha)
                &= 
                a^\rho_\alpha(u_\alpha,v_\alpha) 
                - \int_{\Gamma_0^{R}} (\rho+ k^2\epsilon_r)\, u_\alpha \cdot \overline{v}_\alpha \dd x 
                \\
                &   \quad\quad\quad
                -
                C(k^2, \alpha) 
                \sum_{j \in \Z^2} (1+|\alpha_j|^2)^{\nicefrac{-1}{2}}\, \widehat{(u_{\alpha,T})}_j \cdot  \overline{\widehat{(v_{\alpha,T})}_j}
                .
            \end{align*}

            \begin{theorem}\label{satz_koerziv}
                For $\varepsilon_r \in W^{1, \infty} (\Omega_0^R)$ and a sufficient large  $\rho > 0$ the sesquilinear form $a^\rho_\alpha$ is coercive on  $\tilde{H}_\alpha^1(\Omega_0^R)^3$, and, in particular, on  ${Y}_\alpha$.
            \end{theorem}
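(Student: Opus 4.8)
The plan is to bound $\Re a^\rho_\alpha(u,u)$ from below by the full $H^1(\Omega_0^R)^3$ norm, splitting the form into its interior, boundary, and stabilising pieces. For the curl term I would use that $\mu_r\in W^{1,\infty}$ with $\Re\mu_r\geq\mu_0>0$, so that $\Re\mu_r^{-1}=\Re\mu_r/|\mu_r|^2\geq\mu_0/\|\mu_r\|_{L^\infty(\Omega_0^R)}^2=:c_\mu>0$ and hence $\Re\int_{\Omega_0^R}\mu_r^{-1}|\nabla\times u|^2\dd x\geq c_\mu\|\nabla\times u\|_{L^2(\Omega_0^R)}^2$; the volume term $\rho\int_{\Omega_0^R}|u|^2\dd x$ contributes $\rho\|u\|_{L^2}^2$. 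It then remains to show that the real part of the remaining boundary part, namely $\langle(N_\alpha-T_\alpha)u_{\alpha,T},u_{\alpha,T}\rangle$ together with the stabilising sum weighted by $C(k^2,\alpha)$, is non-negative. Granting this, one obtains $\Re a^\rho_\alpha(u,u)\geq c_\mu\|\nabla\times u\|_{L^2}^2+\rho\|u\|_{L^2}^2$, which controls the entire $H(\curl)$ norm.

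The boundary estimate is the heart of the argument and I would carry it out mode by mode in the Fourier expansion on $\Gamma_0^R$. Decomposing each coefficient $\widehat{(u_{\alpha,T})}_j$ into a part $a_j$ parallel to $\alpha_j:=\alpha+j$ and a part $b_j$ orthogonal to it, one sees that $N_\alpha$ acts only on the parallel part while $T_\alpha$ acts on the full vector. By \eqref{eq_Tinequalities} and \eqref{eq_Ninequalities} the real parts of both pairings vanish on the propagating modes $|\alpha_j|<k$, so only the evanescent modes $|\alpha_j|>k$ contribute. There the leading symbols of $N_\alpha$ and of $-T_\alpha$ on the parallel part cancel, leaving a residual of order $k^2|a_j|^2/|k^2-|\alpha_j|^2|^{\nicefrac{1}{2}}$, whereas $-T_\alpha$ produces the manifestly non-negative term of order $|k^2-|\alpha_j|^2|^{\nicefrac{1}{2}}|b_j|^2$ on the orthogonal part. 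The constant $C(k^2,\alpha)=\tfrac{k^2}{2\pi}\sup_{j}(1+|\alpha_j|^2)^{\nicefrac{1}{2}}/|k^2-|\alpha_j|^2|^{\nicefrac{1}{2}}$ is designed precisely so that the stabilising contribution $C(k^2,\alpha)(1+|\alpha_j|^2)^{-\nicefrac{1}{2}}|a_j|^2$ dominates this residual for every $j$; this supremum is finite because $\alpha\in\overline{I}\setminus\mathcal{A}$ forces $|\alpha_j|\neq k$, and only finitely many $\alpha_j$ accumulate near $k$. The main obstacle of the whole proof lies exactly here: verifying the leading-symbol cancellation between $N_\alpha$ and $T_\alpha$ on the longitudinal modes and checking that $C(k^2,\alpha)$ dominates the residual \emph{uniformly in} $j$, in particular for the near-resonant modes with $|\alpha_j|$ close to $k$.

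Finally I would upgrade the $H(\curl)$ bound to the $H^1$ norm. For this I invoke the identity established in the proof of Theorem~\ref{satz_regGebiet}, which for $\alpha$-quasi-periodic fields with $u_{\alpha,T}|_{\Gamma_0^0}=0$ (the lateral boundary terms cancelling by quasi-periodicity) may be rearranged as
\[
    \|\nabla\times u\|_{L^2(\Omega_0^R)}^2
    =\|\nabla u\|_{L^2(\Omega_0^R)}^2-\|\divv u\|_{L^2(\Omega_0^R)}^2
    +2\Re\int_{\Gamma_0^R}(\divv_T u_{\alpha,T})\,\overline{u_{\alpha,3}}\dd S.
\]
Here the divergence control is indispensable and is exactly what confines the statement to $\tilde{Y}_\alpha$, and a fortiori to $Y_\alpha$: by \eqref{eq_divLeqU} one has $\|\divv u\|_{L^2}\leq C\|u\|_{L^2}$, so the term $-c_\mu\|\divv u\|^2$ is of lower order, and the boundary cross term is estimated by a Young/interpolation inequality together with the trace theorem as $\epsilon\|\nabla u\|^2+C_\epsilon\|u\|^2_{L^2}$. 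Choosing $\epsilon$ small and $\rho$ large enough to absorb these lower-order contributions into $c_\mu\|\nabla u\|^2$ and $\rho\|u\|_{L^2}^2$ yields $\Re a^\rho_\alpha(u,u)\geq c\,\|u\|_{H^1(\Omega_0^R)^3}^2$ for all $u\in Y_\alpha$, which is the asserted coercivity.
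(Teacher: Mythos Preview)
Your treatment of the $(N_\alpha-T_\alpha)$ boundary contribution and of the stabilising $C(k^2,\alpha)$-term is essentially the paper's argument, and the curl--div--grad identity you invoke is also the right tool. The gap is in the last paragraph: the boundary cross term
\[
    2\,\Re\int_{\Gamma_0^R}(\divv_T u_{\alpha,T})\,\overline{u_{\alpha,3}}\dd S
\]
cannot be estimated by $\epsilon\,\|\nabla u\|_{L^2}^2+C_\epsilon\,\|u\|_{L^2}^2$. In Fourier this term is $\sum_j(-\ii\alpha_j\cdot\widehat{(u_T)}_j)\,\overline{\widehat{(u_3)}_j}$, and any Cauchy--Schwarz splitting lands you at best on a product of two $H^{1/2}(\Gamma_0^R)$ norms, each of which already costs the full $H^1(\Omega_0^R)$ norm by the trace theorem. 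There is no interpolation slack; the term is genuinely of the same order as $\|\nabla u\|_{L^2}^2$, with a constant you do not control, so it cannot be absorbed by choosing $\rho$ large.

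This is precisely why the paper introduced the \emph{second} Helmholtz decomposition (Lemma~\ref{lemma_ZerlegungAlpha2}) and the space $Y_\alpha$ with its specific boundary relation $k^2\widehat{(u_{\alpha,3})}_j=k^2(-\ii\alpha_j\cdot\widehat{(u_{\alpha,T})}_j)\big/\!\bigl(-\ii\sqrt{k^2-|\alpha_j|^2}\,\bigr)$. Plugging this into the cross term gives, for $u_\alpha\in Y_\alpha$,
\[
    2\,\Re\int_{\Gamma_0^R}(\divv_T u_{\alpha,T})\,\overline{u_{\alpha,3}}\dd S
    = 2\sum_{|\alpha_j|>k}\frac{|\alpha_j\cdot\widehat{(u_{\alpha,T})}_j|^2}{\sqrt{|\alpha_j|^2-k^2}}
    \;\geq\;0,
\]
so no estimation is needed at all --- the sign is favourable. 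Your argument correctly identifies that the divergence bound \eqref{eq_divLeqU} confines matters to $\tilde{Y}_\alpha$, but it misses that the additional boundary condition singling out $Y_\alpha\subset\tilde{Y}_\alpha$ is what actually disposes of the cross term. Without it the coercivity does not follow.
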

            \begin{proof}
                For the boundary term it holds
                \begin{align*}
                    &\frac{1}{2 \pi}\sum_{j \in \Z^2,\,|\alpha_j| > k} -\left(| \alpha_j|^2 - k^2 \right)^{\nicefrac{-1}{2}}\, \left|\alpha_j \cdot \widehat{(u_T)}_{j}\right|^2  + \left(|\alpha_j|^2 - k^2\right)^{\nicefrac{1}{2}} \,\left|\widehat{(u_T)}_{j}\right|^2
                    \\
                    &\geq 
                    \frac{1}{2 \pi}\sum_{j \in \Z^2,\,|\alpha_j| > k} \left(| \alpha_j|^2 - k^2\right)^{\nicefrac{-1}{2}} \left(-|\alpha_j|^2+ | \alpha_j|^2 - k^2 \right)\, \left|\widehat{(u_T)}_{j}\right|^2
                    \\
                    &=
                    \frac{- k^2}{2 \pi}\sum_{j \in \Z^2,\,|\alpha_j| > k}  \frac{\left|\widehat{(u_T)}_{j}\right|^2}{\left(| \alpha_j|^2 - k^2 \right)^{\nicefrac{1}{2}}}
                    ,
                \end{align*}
                and, in consequence, we have the estimation
                \begin{align*}
                    \Re  \int_{\Gamma_0^{R}}
                    \left[ N_\alpha -T_\alpha \right](u_{\alpha,T} )  \cdot  \overline{u}_{\alpha,T}
                    \dd  S
                    \geq
                    -C(k^2, \alpha)\, ||u_{\alpha,T}||^2_{H_\alpha^{\nicefrac{-1}{2}}(\Gamma_0^{R})}
                    .
                \end{align*}
                The assumption $\varepsilon_r \in W^{1, \infty} (\Omega_0^R)$ implicates
                $
                    ||\divv u_\alpha||^2_{L^2(\Omega_0^R)} - C_2||u_\alpha||^2_{L^2(\Omega_0^R)} 
                    \leq 0
                    ,
                $
                wherefrom we derive
                \begin{align*}
                    &\Re \int_{\Omega_0^R} 
                    \mu_r^{-1} \left( \nabla \times u_\alpha \right) \cdot \left( \nabla \times \overline{u }_\alpha \right)
                    +   \rho\, u_\alpha \cdot \overline{u}_\alpha 
                    \dd  x 
                    \\
                    &
                    \geq ||\mu_r||_{L^\infty(\Omega_0^R)}^{-1} ||\nabla \times u_\alpha||^2_{L^2(\Omega_0^R)^3} +   \rho\,||u_\alpha||^2_{L^2(\Omega_0^R)^3}
                    \\
                    &\geq C_1 ||\nabla \times u_\alpha||^2_{L^2(\Omega_0^R)^3} + C_1||\divv \cdot   u_\alpha  ||^2_{L^2(\Omega_0^R)} + (\rho-C_1C_2) ||u_\alpha||^2_{L^2(\Omega_0^R)^3}
                    .
                \end{align*}
                An analogous computation to Theorem~\ref{satz_regGebiet} gives the identity
                \begin{equation*}
                    \int_{\Omega_0^R} | \nabla \times v |^2 + |\divv v|^2 \dd  x
                    = \sum_{j=1}^3 \int_{\Omega_0^R} |\nabla v_j|^2 + 2\, \Re    \int_{\Gamma_0^R}   (\divv_T v_T )\,\overline{v}_3 \dd  S 
                \end{equation*}
                for $v \in \tilde{H}_{\alpha}^1(\Omega_0^R)^3$, wherefrom it follows
                \begin{align*}
                    &\Re \int_{\Omega_0^R} 
                    \mu_r^{-1} \left( \nabla \times u_\alpha \right) \cdot \left( \nabla \times \overline{u }_\alpha \right)
                    + \rho\, u_\alpha \cdot \overline{u}_\alpha 
                    \dd  x 
                    \\
                    &
                    \geq C_1\, ||u_\alpha||^2_{H^1(\Omega_0^R)^3}  + (\rho- C_2   C_1)\, ||u_\alpha||^2_{L^2(\Omega_0^R)^3}
                    \\
                    & \quad\quad\quad
                    + 2  C_1\, \Re   \int_{\Gamma_0^R}  (\divv_T u_{\alpha,T}) \overline{u}_{\alpha,3} \dd  S
                    .
                \end{align*}
                Let $( u )^{\hat{}}_j$ denote the $j$th Fourier coefficient of some function $u \in L^2(\Gamma^R_0)$. Considering the boundary condition of the space $Y_\alpha$, we compute
                \begin{align*}
                    k^2   \hat{(u_{\alpha, 3})}_{j} 
                    &=   - \left(\divv_T\left( N_\alpha - T_\alpha\right)( u_{\alpha, T})\right)^{\hat{}}_j
                    \\
                    &=  \ii    \alpha_j \cdot \left( \frac{ -\ii \alpha_j}{\sqrt{k^2-|\alpha_j|^2}} \left( \alpha_j \cdot \widehat{(u_{\alpha, T})}_j\right) - \ii \sqrt{k^2 - |\alpha_j|^2}\, \widehat{(u_{\alpha, T})}_j \right)
                    \\
                    &= \frac{ 1  }{\sqrt{k^2-|\alpha_j|^2}}   \left( |\alpha_j|^2  \left( \alpha_j \cdot \widehat{(u_{\alpha, T})}_j\right)  +\left(k^2 - |\alpha_j|^2\right) \left(\alpha_j \cdot \widehat{(u_{\alpha, T})}_j \right) \right)
                    \\
                    &= k^2 \frac{ -\ii \alpha_j \cdot \widehat{(u_{\alpha, T})}_j }{ -\ii  \sqrt{k^2-|\alpha_j|^2}}
                    .
                \end{align*}
                Therefore, the boundary term is non-negative, since
                \begin{equation*}
                    2\, C_1\, \Re  \int_{\Gamma_0^R}  (\divv_T u_{\alpha, T})\, \overline{u}_{\alpha, 3} \dd  S
                    = 2\, C_1 \sum_{j \in \Z^2,\,|\alpha_j| > k}   \frac{1}{\sqrt{|\alpha_j|^2 - k^2}} \,|\alpha_j \cdot u_{\alpha, T}  |^2  
                    \geq 0
                    .
                \end{equation*}
                If we put everything together, we derive the estimation 
                \begin{align*}
                    \Re a^\rho_\alpha(u_\alpha, u_\alpha)
                    \geq C\, ||u_\alpha||^2_{H^1(\Omega_0^R)^3} + C(\rho)  \,|| u_\alpha||^2_{L^2(\Omega_0^R)^3}
                    \geq c \,||u_{\alpha}||^2_{\tilde{X}_\alpha}
                    ,
                \end{align*}
                and hence, the sesquilinear form is coercive on $\tilde{Y}_\alpha$.
            \end{proof}
            
            Thus, we are prepared to show the unique existence of the solution to the reduced quasi-periodic problem.
            
            \begin{lemma}\label{lemma_perRWPLoesung}
                If the Assumption~\ref{assumption_absorptionMaxwell} holds, then for all  $\alpha \in \overline{I_{}} \setminus \mathcal{A}$ the problem
                \begin{equation*}
                    a_\alpha(u_\alpha, v_\alpha)
                    = g_\alpha(v_\alpha)
                    \quad\text{for all  }
                    v_\alpha \in {Y}_\alpha
                \end{equation*}
                is uniquely solvable.
            \end{lemma}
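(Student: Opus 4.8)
The plan is to realize $a_\alpha$ as a compact perturbation of the coercive form $a^\rho_\alpha$ and then close the argument with the Fredholm alternative, using the uniqueness mechanism of Lemma~\ref{lemma_eindInt} for injectivity. Recall from the remarks following Lemma~\ref{lemma_ZerlegungAlpha} that ${Y}_\alpha \subseteq \tilde{H}^1_\alpha(\Omega_0^R)^3$ with equivalent norms. Writing $a_\alpha = a^\rho_\alpha - k_\alpha$ with
\[
    k_\alpha(u_\alpha, v_\alpha)
    := \int_{\Omega_0^R} (\rho + k^2 \epsilon_r)\, u_\alpha \cdot \overline{v}_\alpha \dd x
    + C(k^2, \alpha) \sum_{j \in \Z^2} (1+|\alpha_j|^2)^{\nicefrac{-1}{2}}\, \widehat{(u_{\alpha,T})}_j \cdot \overline{\widehat{(v_{\alpha,T})}_j}
    ,
\]
Theorem~\ref{satz_koerziv} guarantees (for $\rho$ large enough) the coercivity of $a^\rho_\alpha$ on ${Y}_\alpha$, so by the lemma of Lax--Milgram the induced operator $A^\rho_\alpha \in \mathcal{L}({Y}_\alpha, {Y}_\alpha')$ is boundedly invertible.

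Next I would check that the operator $K_\alpha \in \mathcal{L}({Y}_\alpha, {Y}_\alpha')$ represented by $k_\alpha$ is compact. The volume part factors through the compact Rellich embedding $\tilde{H}^1_\alpha(\Omega_0^R)^3 \hookrightarrow L^2(\Omega_0^R)^3$, and the boundary part is nothing but the $H_\alpha^{\nicefrac{-1}{2}}(\Gamma_0^R)$-pairing of the tangential traces: it factors through the continuous trace map ${Y}_\alpha \to H_\alpha^{\nicefrac{1}{2}}(\Gamma_0^R)$ followed by the compact embedding $H_\alpha^{\nicefrac{1}{2}}(\Gamma_0^R) \hookrightarrow H_\alpha^{\nicefrac{-1}{2}}(\Gamma_0^R)$, the weight $(1+|\alpha_j|^2)^{\nicefrac{-1}{2}}$ being exactly what realizes this drop in regularity. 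Consequently $A_\alpha = A^\rho_\alpha - K_\alpha$ is a compact perturbation of an isomorphism, hence Fredholm of index zero, and by the Fredholm alternative it is bijective as soon as it is injective.

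It remains to prove injectivity, which I expect to be the main obstacle, since the uniqueness statement of Lemma~\ref{lemma_eindInt} is phrased for test functions ranging over the full space $\tilde{X}_\alpha$, whereas here we only know $a_\alpha(u_\alpha,\cdot)$ vanishes on the subspace ${Y}_\alpha$. So let $u_\alpha \in {Y}_\alpha$ satisfy $a_\alpha(u_\alpha, v_\alpha) = 0$ for all $v_\alpha \in {Y}_\alpha$; I would first upgrade this to a test against all of $\tilde{X}_\alpha$ by means of the decompositions $\tilde{X}_\alpha = \tilde{Y}_\alpha \oplus \tilde{Y}_\alpha^{\perp}$ and $\tilde{Y}_\alpha = {Y}_\alpha \oplus {Y}_\alpha^{\perp}$ from Lemmas~\ref{lemma_ZerlegungAlpha} and~\ref{lemma_ZerlegungAlpha2}. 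Indeed every element of ${Y}_\alpha^{\perp} \oplus \tilde{Y}_\alpha^{\perp}$ is a gradient $\nabla w$ with $w \in W$, the curl term then vanishes because $\nabla \times \nabla w = 0$, and the surviving contribution equals $-b^{(\varepsilon_r)}(u_\alpha, \nabla w)$, which is zero since $u_\alpha \in {Y}_\alpha$ lies in the null space of $P\colon u \mapsto b^{(\varepsilon_r)}(u, \nabla\,\cdot)$. Hence $a_\alpha(u_\alpha, v) = 0$ for every $v \in \tilde{X}_\alpha$, and in particular for $v = u_\alpha$. Taking imaginary parts and invoking $\Im \mu_r \geq 0$, $\Im \epsilon_r \geq 0$ together with the sign conditions~\eqref{eq_Tinequalities} and~\eqref{eq_Ninequalities} exactly as in the proof of Lemma~\ref{lemma_eindInt}, each non-positive contribution must vanish, forcing $u_\alpha = 0$ on $\{\Im \epsilon_r > 0\}$. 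Since this set contains an open ball by Assumption~\ref{assumption_absorptionMaxwell}, the unique continuation Proposition~\ref{proposition_uniqueContienuationMaxwell} propagates this to $u_\alpha = 0$ on all of $\Omega_0^R$, establishing injectivity and thereby completing the proof.
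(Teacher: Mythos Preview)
Your proposal is correct and follows essentially the same approach as the paper: split $a_\alpha$ into the coercive form $a^\rho_\alpha$ plus a compact remainder, invoke the Fredholm alternative, and establish injectivity via the imaginary-part estimate together with Proposition~\ref{proposition_uniqueContienuationMaxwell}. You supply more detail than the paper's own proof --- in particular the explicit treatment of the boundary term's compactness and the extension of $a_\alpha(u_\alpha,\cdot)=0$ from $Y_\alpha$ to all of $\tilde{X}_\alpha$ via the Helmholtz decompositions (so that $u_\alpha$ is a genuine distributional solution and unique continuation applies) --- but the strategy is the same.
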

            \begin{proof}
                Because of Theorem~\ref{satz_koerziv} and the compact embedding of $\tilde{H}^1_\alpha(\Omega_0^R)$ in $L^2(\Omega_0^R)$ (see \cite[Theorem~3.27]{McLea2000}), we can split the sesquilinear form into a coercive part and a compact perturbation.
                Thus, it remains to show the uniqueness and apply the Fredholm alternative for the existence of the solution.
                The uniqueness can be shown analogously to Lemma~\ref{lemma_eindInt} by applying Proposition~\ref{proposition_uniqueContienuationMaxwell}, since
                \[
                    0
                    = \int_{\Omega_0^R} - k^2 (\Im \epsilon_r) \, |u_\alpha|^2 \dd  x 
                    + \Im    \int_{\Gamma_0^{R}} 
                    \left( N_\alpha - T_\alpha \right) (u_{\alpha,T} )\cdot \overline{u}_{\alpha,T} 
                    \dd  S
                    \leq 0
                \]
                holds.
            \end{proof}
            
        \subsection{Constructing the solution}
            Having the existence theory for the quasi-periodic problems, we want to construct the solution to the problem in the integral form \eqref{eq_transVarProblem}. The following theorem summarizes this subsection.
            \begin{theorem}\label{satz_pktweise1}
                The existence of unique solutions $u_\alpha \in \tilde{X}_\alpha$ for all $\alpha \in \overline{I_{}} \setminus \mathcal{A}$ to the $\alpha$-quasi-periodic problem~\eqref{eq_probWirklichPunkt} implicates the existence of the unique solution $\tilde{u} \in L^2_{\mathrm{w}}(I_{}; \tilde{X}_\alpha)$, $\tilde{u}(\alpha, \cdot) := u_\alpha$, to the problem~\eqref{eq_transVarProblem} in the integral form.
                Furthermore, it holds 
                \begin{equation}\label{eq_abschaetzung}
                    ||\tilde{u}||_{L^2_{\mathrm{w}}(I_{};\tilde{X}_\alpha)}
                    \leq c\, ||\tilde{f}||_{L^2(I_{} \times \Omega_0^R)}
                    .
                \end{equation}
            \end{theorem}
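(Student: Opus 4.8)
\textbf{The plan.} Write $\tilde{u}(\alpha,\cdot) := u_\alpha$ for the pointwise solutions provided by Lemma~\ref{lemma_perRWPLoesung}. To prove the theorem I must verify four things: that $\alpha \mapsto u_\alpha$ is measurable, that it lies in $L^2_{\mathrm{w}}(I_{};\tilde{X}_\alpha)$ together with the bound~\eqref{eq_abschaetzung}, that it solves the integral problem~\eqref{eq_transVarProblem}, and that it is the only such solution. The core of everything is a single \emph{uniform} a priori estimate $\|u_\alpha\|_{\tilde{X}_\alpha}^2 + \sum_{j\in\Z^2}|k^2-|\alpha_j|^2|^{\nicefrac{-1}{2}}\,|\alpha_j\cdot\widehat{(u_{\alpha,T})}_j|^2 \le c\,\|f_\alpha\|_{L^2(\Omega_0^R)}^2$ with $c$ independent of $\alpha \in \overline{I}\setminus\mathcal{A}$; everything else is then assembled by integrating in $\alpha$.

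\emph{Controlling the weighted boundary term.} Testing~\eqref{eq_probWirklichPunkt} with $v_\alpha = u_\alpha$ and taking the imaginary part, the volume contributions $\int \Im(\mu_r^{-1})|\nabla\times u_\alpha|^2$ and $-k^2\int\Im\epsilon_r|u_\alpha|^2$ are non-positive by Assumption~\ref{assumption_absorptionMaxwell}, while the boundary part splits along the modes: by~\eqref{eq_Tinequalities} and~\eqref{eq_Ninequalities} the propagating modes $|\alpha_j|<k$ produce precisely $-\tfrac{1}{2\pi}\sum_{|\alpha_j|<k}(k^2-|\alpha_j|^2)^{\nicefrac{-1}{2}}|\alpha_j\cdot\widehat{(u_{\alpha,T})}_j|^2$ and the evanescent modes a further non-positive term. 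Since the whole expression equals $\Im\int f_\alpha\cdot\overline{u}_\alpha$, the propagating part of the weighted sum is bounded by $\|f_\alpha\|\,\|u_\alpha\|$. The evanescent part $\sum_{|\alpha_j|>k}(|\alpha_j|^2-k^2)^{\nicefrac{-1}{2}}|\alpha_j\cdot\widehat{(u_{\alpha,T})}_j|^2$ is, through the $Y_\alpha$--relation for $\widehat{(u_{\alpha,3})}_j$, exactly the non-negative boundary term $2\Re\int_{\Gamma_0^R}(\divv_T u_{\alpha,T})\overline{u}_{\alpha,3}$ already treated in the proof of Theorem~\ref{satz_koerziv}, hence is controlled by $\|u_\alpha\|_{\tilde{X}_\alpha}^2$. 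Thus the entire weighted term is dominated by $\|f_\alpha\|^2 + \|u_\alpha\|_{\tilde{X}_\alpha}^2$, and it remains to bound $\|u_\alpha\|_{\tilde{X}_\alpha}$.

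\emph{The uniform $\tilde{X}_\alpha$-bound (main obstacle).} For the $\tilde{X}_\alpha$-norm I would use the splitting $a_\alpha = a^\rho_\alpha - K_\alpha$ from Theorem~\ref{satz_koerziv}: coercivity of $a^\rho_\alpha$ gives $c\|u_\alpha\|_{\tilde{X}_\alpha}^2 \le \Re g_\alpha(u_\alpha) + \Re K_\alpha(u_\alpha,u_\alpha)$, where $K_\alpha$ collects the $L^2$-term and the boundary term $C(k^2,\alpha)\sum_j(1+|\alpha_j|^2)^{\nicefrac{-1}{2}}\widehat{(u_{\alpha,T})}_j\cdot\overline{\widehat{(v_{\alpha,T})}_j}$; it is compact but its norm degenerates as $\alpha\to\mathcal{A}$ through the factor $C(k^2,\alpha)$. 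Because of this degeneracy the estimate cannot be closed directly, and I expect this to be the hard step. I would argue by contradiction: if no uniform $c$ exists, pick $\alpha_n\in\overline{I}\setminus\mathcal{A}$ and data with $\|u_{\alpha_n}\|_{\tilde{X}_{\alpha_n}}=1$ and $\|f_{\alpha_n}\|\to 0$, and pass to a subsequence $\alpha_n\to\alpha_*\in\overline{I}$. If $\alpha_*\notin\mathcal{A}$ the forms and the operators $N_\alpha,T_\alpha$ converge continuously, the compact embedding $\tilde{H}^1_\alpha(\Omega_0^R)\hookrightarrow L^2(\Omega_0^R)$ yields a strongly convergent subsequence, and the limit is a nontrivial solution of the homogeneous problem at $\alpha_*$, contradicting the uniqueness in Lemma~\ref{lemma_perRWPLoesung}. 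The genuinely delicate case is $\alpha_*\in\mathcal{A}$, where grazing modes $|\alpha_{n,j}|\to k$ could in principle carry the unit energy; here I would use the imaginary-part identity above to show that the energy in the grazing modes is controlled by $\|f_{\alpha_n}\|\to0$, and the absorption hypothesis (the ball on which $\Im\epsilon_r>0$) together with the unique continuation Proposition~\ref{proposition_uniqueContienuationMaxwell} to exclude a surviving surface-wave limit, again forcing the limit to vanish and producing a contradiction.

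\emph{Assembling the solution.} Granting the uniform estimate, integration over $I_{}$ and the isometry $\int_{I_{}}\|f_\alpha\|_{L^2(\Omega_0^R)}^2\,\dd\alpha = \|\tilde{f}\|_{L^2(I_{}\times\Omega_0^R)}^2$ give $\tilde{u}\in L^2_{\mathrm{w}}(I_{};\tilde{X}_\alpha)$ together with~\eqref{eq_abschaetzung}. Measurability of $\alpha\mapsto u_\alpha$ follows from the continuous (indeed analytic) dependence of the solution operator on $\alpha$ over $\overline{I}\setminus\mathcal{A}$, the set $\mathcal{A}$ being a finite union of circles and hence of Lebesgue measure zero. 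That $\tilde{u}$ solves~\eqref{eq_transVarProblem} is obtained by integrating the pointwise identity~\eqref{eq_probWirklichPunkt} against test functions $v\in L^2_{\mathrm{w}}(I_{};\tilde{X}_\alpha)$ and invoking Fubini's theorem, the integrands being integrable by the bound just established. Uniqueness is immediate from Lemma~\ref{lemma_eindInt}.
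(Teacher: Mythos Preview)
Your strategy differs fundamentally from the paper's. Rather than a compactness--contradiction argument, the paper isolates, for each $\hat\alpha\in\mathcal A$, the finitely many grazing Fourier modes $j\in J_{\hat\alpha}$ and writes the operator as a finite-rank perturbation $S_\alpha+Z_\alpha D_\alpha Z_\alpha^*$ of an operator $S_\alpha$ (carrying all of $a_\alpha$ except the singular $\hat N_\alpha$--part) that is invertible \emph{uniformly} in a neighbourhood of $\hat\alpha$. The Sherman--Morrison--Woodbury formula (Theorem~\ref{satz_SMWFormel}) then yields
\[
L_\alpha=S_\alpha^{-1}-S_\alpha^{-1}Z_\alpha\bigl(D_\alpha^{-1}+Z_\alpha^*S_\alpha^{-1}Z_\alpha\bigr)^{-1}Z_\alpha^*S_\alpha^{-1};
\]
since $D_\alpha^{-1}\to0$ as $\alpha\to\hat\alpha$ while $Z_\alpha^*S_\alpha^{-1}Z_\alpha$ stays invertible (Lemma~\ref{lemma_neq0}), $L_\alpha$ extends continuously to all of $\overline I$, and the uniform bound follows. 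The weighted boundary contribution is afterwards controlled not via any $Y_\alpha$--identity but by the crude estimate $|\langle\hat N_\alpha u_{\alpha,T},u_{\alpha,T}\rangle|\le 2\pi|a_\alpha(u_\alpha,u_\alpha)|+2\pi|s_\alpha(u_\alpha,u_\alpha)|\le C\|f_\alpha\|^2$, using that $s_\alpha$ is uniformly bounded.

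Your contradiction argument has a genuine gap at $\alpha_*\in\mathcal A$, and I do not see how to close it without a device of SMW type. Two obstructions. First, your coercive--plus--compact splitting $a_\alpha=a_\alpha^\rho-K_\alpha$ has $K_\alpha$ carrying the factor $C(k^2,\alpha)\to\infty$; even granting $u_{\alpha_n}\to0$ in $L^2(\Omega_0^R)$, the term $C(k^2,\alpha_n)\,\|u_{\alpha_n,T}\|_{H^{-1/2}}^2$ need not vanish (no rate on $\|u_{\alpha_n}\|_{L^2}$ is available), so you cannot recover $\|u_{\alpha_n}\|_{\tilde X_{\alpha_n}}\to0$ and the contradiction fails. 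Second, the limit $u_*$ does not obviously solve a well-posed problem: at $\alpha_*$ the grazing modes carry no boundary condition (both the $N_\alpha$-- and $T_\alpha$--coefficients degenerate there), so invoking unique continuation presupposes a limiting equation on $\Gamma_0^R$ that you have not identified. Your imaginary-part identity controls only the \emph{propagating} grazing contributions; the $Y_\alpha$--relation you invoke for the evanescent side is available only for the Helmholtz-decomposed piece in $Y_\alpha$, whereas the theorem is stated for the full solution in $\tilde X_\alpha$, and you do not discuss the (possibly $\alpha$-nonuniform) bounds on the gradient pieces $\nabla w_\alpha$, $\nabla\tilde w_\alpha$ needed to reassemble it. The singularity of $N_\alpha$ is exactly what distinguishes the present Maxwell setting from the scalar Helmholtz case, and the SMW construction is the paper's tool for handling it.
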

            
            For the proof we  show that the quasi-periodic solution operator $L_\alpha$, which is only defined for $\alpha \in \overline{I}_{}\setminus \mathcal{A}$, can be extended continuously to $\overline{I}_{}$. Therefore, we can find a global constant $C$, such that $\sup_{\alpha \in \overline{I}} ||L_\alpha||_{L^2(I;X_\alpha)} \leq C$ holds.
            Afterwards, we still have to show that the estimation~\eqref{eq_abschaetzung} holds for the weighted space $ L^2_{\mathrm{w}}(I_{}; \tilde{X}_\alpha)$. For the extension we  utilize the Sherman-Morrison-Woodbury formula, which we  prove first.
            \begin{theorem}{}\label{satz_SMWFormel}
                Let $H_1$ and $H_2$ two Hilbert spaces and $S \in \mathcal{L}(H_1)$ as well as $D \in \mathcal{L}(H_2)$ two invertible bounded operators.
                Further, choose two linear and continuous operators $Z_1 \in \mathcal{L}(H_1, H_2)$ and $Z_2 \in \mathcal{L}(H_1, H_2)$, such that
                \[
                    B
                    := S + Z_2^* D Z_1 
                    \in \mathcal{L}(H_1)
                \quad\text{and}\quad
                    G
                    :=
                    D^{-1} + Z_1 S^{-1} Z_2^*
                    \in \mathcal{L}(H_2)
                \]
                are invertible. Then the inverse of $B$ can be represented by
                \begin{equation}\label{eq_SMW}
                    B^{-1}
                    = 
                    S^{-1} - S^{-1} Z_2^* \left(D^{-1} + Z_1 S^{-1} Z_2^* \right)^{-1} Z_1 S^{-1}
                    .
                \end{equation}
            \end{theorem}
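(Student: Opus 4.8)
The plan is to verify directly that the operator on the right-hand side of~\eqref{eq_SMW} is a two-sided inverse of $B$. Write $C := S^{-1} - S^{-1} Z_2^* G^{-1} Z_1 S^{-1}$ for the claimed inverse, with $G = D^{-1} + Z_1 S^{-1} Z_2^*$. This $C$ is a well-defined element of $\mathcal{L}(H_1)$: the operators $S$, $D$ and $G$ are invertible bounded operators on Hilbert spaces, so their inverses are bounded by the open mapping theorem, and the composition is type-correct, since reading right to left $S^{-1}$ maps $H_1 \to H_1$, then $Z_1$ maps $H_1 \to H_2$, then $G^{-1}$ maps $H_2 \to H_2$, then $Z_2^*$ maps $H_2 \to H_1$, and finally $S^{-1}$ maps $H_1 \to H_1$. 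The entire argument thus reduces to an algebraic manipulation of bounded operators, with no analytic subtleties.

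The key observation is the identity $Z_1 S^{-1} Z_2^* = G - D^{-1}$, which is immediate from the definition of $G$. I would first compute $BC$ by expanding $(S + Z_2^* D Z_1)(S^{-1} - S^{-1} Z_2^* G^{-1} Z_1 S^{-1})$ into four terms. The nontrivial cross term $Z_2^* D Z_1 S^{-1} Z_2^* G^{-1} Z_1 S^{-1}$ is simplified by substituting the identity and using $D(G - D^{-1})G^{-1} = D - G^{-1}$, giving $Z_2^*(D - G^{-1}) Z_1 S^{-1}$. Once this is inserted, the remaining terms cancel pairwise and one is left with $BC = I$. The computation for $CB$ is entirely analogous: expanding $(S^{-1} - S^{-1} Z_2^* G^{-1} Z_1 S^{-1})(S + Z_2^* D Z_1)$ and now using $G^{-1}(G - D^{-1})D = D - G^{-1}$ at the corresponding step yields $CB = I$ after the same pairwise cancellation.

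Having established $BC = CB = I$, the operator $C$ is the inverse of $B$, which is exactly the formula~\eqref{eq_SMW}. Since the statement is a finite algebraic identity among bounded operators, there is no genuine obstacle beyond careful bookkeeping; the only point requiring attention is that $G^{-1}$ acts on $H_2$ while $S^{-1}$ acts on $H_1$, so that every composition is type-correct and the invertibility hypotheses on $S$, $D$ and $G$ are invoked precisely where their inverses appear.
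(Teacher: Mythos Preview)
Your proof is correct and follows essentially the same route as the paper: a direct algebraic verification that the operator $C$ in~\eqref{eq_SMW} inverts $B$, using the relation $Z_1 S^{-1} Z_2^* = G - D^{-1}$ to collapse the cross terms. The only minor difference is that the paper checks just $CB = I_1$ and then invokes the assumed invertibility of $B$ via $B^{-1} = CBB^{-1} = C$, whereas you verify both $BC = I$ and $CB = I$; either is sufficient here.
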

            \begin{proof}
                We call the operator on the right-hand side of \eqref{eq_SMW} as $C$. We assumed that $S$, $D$, $B$ and $G$ are continuously invertible operators, and hence, $C$ is also continuously invertible.
                We call $I_1 \in \mathcal{L}(H_1)$ and $I_2 \in \mathcal{L}(H_1)$ the two identity operators on $H_1$, or  on $H_2$, respectively. Thus, we compute
                \begin{align*}
                    C B
                    &= \left(S^{-1} - S^{-1} Z_2^* \left(D^{-1} + Z_1 S^{-1} Z_2^*\right)^{-1} Z_1 S^{-1} \right) (S + Z_2^* D Z_1 )
                    \\
                    &= I_1 + S^{-1}Z_2^* D Z_1 - S^{-1} Z_2^* G^{-1} Z_1 
                    - S^{-1} Z_2^* G^{-1} Z_1 S^{-1} Z_2^* D Z_1
                    \\
                    &= I_1 + S^{-1}Z_2^* G^{-1}  \left [\left(D^{-1} + Z_1 S^{-1} Z_2^*\right) D - I_2 -   Z_1 S^{-1} Z_2^* D \right] Z_1
                    \\
                    &= I_1 + S^{-1}Z_2^* G^{-1} \left [I_2 + Z_1 S^{-1} Z_2^* D - I_2 -  Z_1 S^{-1} Z_2^* D\right] Z_1
                    \\
                    & = I_1
                \end{align*}
                and therefore, $B^{-1} = C B B^{-1} = C$ holds.
            \end{proof}

            The sesquilinear form $a_\alpha$ is continuous outside of the singularities, which can be proven analogously to \cite[Lemma~6]{Konschin19a}
            \begin{lemma}
                If Assumption~\ref{assumption_absorptionMaxwell} holds, then the solution operator for the sesqui\-linear form $a_\alpha$ is continuous in $\overline{I}_{} \setminus \mathcal{A}$.
            \end{lemma}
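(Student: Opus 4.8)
The plan is to follow the argument of \cite[Lemma~6]{Konschin19a}: remove the $\alpha$-dependence of the quasi-periodic spaces by periodization, show that the resulting operator family on a fixed space depends continuously on $\alpha$, and then invoke the continuity of operator inversion. Fix $\alpha_0 \in \overline{I} \setminus \mathcal{A}$; it suffices to prove continuity at $\alpha_0$. By Lemmas~\ref{lemma_ZerlegungAlpha}, \ref{lemma_ZerlegungAlpha2} and~\ref{lemma_perRWPLoesung} the bounded operator $A_\alpha \in \mathcal{L}(\tilde{X}_\alpha, \tilde{X}_\alpha')$ representing $a_\alpha$ is boundedly invertible for every $\alpha \in \overline{I} \setminus \mathcal{A}$, so the solution operator is $L_\alpha := A_\alpha^{-1}$.

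First I would introduce the periodization isometry $\Phi_\alpha \colon u \mapsto e^{\ii \alpha \cdot \tx}\, u$, which sends the $\alpha$-quasi-periodic field $u = \sum_{j} \hat{u}_j\, e^{-\ii(\alpha+j)\cdot\tx}$ to the $2\pi$-periodic field $\sum_j \hat{u}_j\, e^{-\ii j \cdot \tx}$ and thereby identifies $\tilde{X}_\alpha$ with a fixed space $\tilde{X}_{\mathrm{per}}$ of periodic $H(\curl)$-fields. Equipping $\tilde{X}_{\mathrm{per}}$ with the $\alpha_0$-weighted norm, I transform the form into $\hat{a}_\alpha(v,w) := a_\alpha(\Phi_\alpha^{-1}v, \Phi_\alpha^{-1}w)$, with associated operator $\hat{A}_\alpha \in \mathcal{L}(\tilde{X}_{\mathrm{per}}, \tilde{X}_{\mathrm{per}}')$. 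Under $\Phi_\alpha$ the differential operators are conjugated into the shifted operators obtained by replacing $\nabla$ with $\nabla - \ii(\alpha_1, \alpha_2, 0)^\top$, so the volume part of $\hat{a}_\alpha$ has coefficients depending polynomially, and hence continuously, on $\alpha$.

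The technical core is the continuity of the transformed boundary terms. After periodization $N_\alpha$ and $T_\alpha$ act as Fourier multipliers on the fixed periodic trace space with symbols built from $\sqrt{k^2 - |\alpha+j|^2}$ and from $(\alpha+j)/\sqrt{k^2-|\alpha+j|^2}$ over $j \in \Z^2$; the same symbols govern the $\alpha$-dependent weight of $\|\cdot\|_{\tilde{X}_\alpha}$. Since $\alpha_0 \notin \mathcal{A}$ we have $|\alpha_0 + j| \neq k$ for every $j$, and because $|\alpha + j| \to \infty$ only finitely many indices $j$ satisfy $\bigl| |\alpha_0+j| - k \bigr| < 1$. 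For these finitely many modes each symbol is continuous in $\alpha$ on a neighbourhood of $\alpha_0$, while for the remaining large-$|j|$ modes $\sqrt{|\alpha+j|^2 - k^2}$ behaves like $|\alpha+j|$ with $\alpha$-derivatives uniformly bounded, so the tails converge uniformly. This gives continuity of $\alpha \mapsto (\hat{N}_\alpha - \hat{T}_\alpha)$ in operator norm near $\alpha_0$ and, by the same estimate, uniform equivalence (with constants tending to $1$) of the $\alpha$- and $\alpha_0$-weighted norms, so that $\Phi_\alpha$ is a uniformly bi-Lipschitz family. Combining the volume and boundary contributions yields continuity of $\alpha \mapsto \hat{A}_\alpha$ in $\mathcal{L}(\tilde{X}_{\mathrm{per}}, \tilde{X}_{\mathrm{per}}')$ at $\alpha_0$. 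I expect this step, which must control simultaneously the finitely many near-resonant modes and the uniform tail across the countably many square-root branch points, to be the main obstacle, and it is exactly here that $\alpha_0 \notin \mathcal{A}$ enters.

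Finally, $\hat{A}_{\alpha_0}$ is boundedly invertible because $A_{\alpha_0}$ is, the well-posedness being furnished by Lemmas~\ref{lemma_perRWPLoesung}, \ref{lemma_ZerlegungAlpha} and~\ref{lemma_ZerlegungAlpha2}, with uniqueness resting on Proposition~\ref{proposition_uniqueContienuationMaxwell}; the uniform coercive-plus-compact splitting of Theorem~\ref{satz_koerziv} moreover keeps the operator family uniformly bounded and index-zero Fredholm. Since the invertible operators form an open set on which inversion is continuous, the continuity of $\alpha \mapsto \hat{A}_\alpha$ forces $\hat{A}_\alpha$ to be invertible for $\alpha$ near $\alpha_0$ with $\hat{A}_\alpha^{-1} \to \hat{A}_{\alpha_0}^{-1}$. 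Transforming back through the uniformly bi-Lipschitz family $\Phi_\alpha$ shows that $L_\alpha = \Phi_\alpha^{-1}\hat{A}_\alpha^{-1}\Phi_\alpha$ depends continuously on $\alpha$ at $\alpha_0$, which proves the claim.
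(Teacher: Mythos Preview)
Your proposal is correct and follows essentially the same approach as the paper, which simply refers to \cite[Lemma~6]{Konschin19a}; you have spelled out the periodization-to-a-fixed-space argument and the Neumann-series continuity of inversion that this citation encodes. The identification of the boundary multipliers as the place where $\alpha_0 \notin \mathcal{A}$ enters is exactly the point, and your treatment of the finitely many near-resonant modes versus the uniform tail is the right way to handle it.
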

            
            Now, we  consider the convergence of the solution operator $L_\alpha$, if $\alpha$ approaches a singularity $\hat{\alpha} \in \mathcal{A}$.
            For that, we fix $\hat{\alpha} \in \mathcal{A}$ and consider the finite set
            \begin{equation}\label{eq_Jalpha}
                J_{\hat{\alpha}}
                := \left\{ j \in \Z^2 : |\hat{\alpha} +  j | = k \right\}
                .
            \end{equation}
            We decompose the boundary operator $N_\alpha$ into two parts, one part is a finite sum with all the singularities and the second part does not include any singularity. We define for  $\alpha \in \overline{I}_{}\setminus \mathcal{A}$,  $u_\alpha \in \tilde{X}_\alpha$, and for $\tx \in \Gamma_0^R$ the operators
            \begin{align*}
                N_\alpha(u_{\alpha,T}) (\tx)
                &=   \left[ \frac{1}{2 \pi}  \sum_{j \not \in J_{\hat{\alpha}}} \frac{-\ii\alpha_j}{\sqrt{k^2 - | \alpha_j|^2}} \left( \alpha_j \cdot \widehat{( u_{\alpha, T})}_j\right) e^{-\ii \alpha_j \cdot \tx}
                \right] \\
                &\quad\quad\quad
                + \left[  \frac{1}{2 \pi}  \sum_{j \in J_{\hat{\alpha}}} \frac{-\ii\alpha_j}{\sqrt{k^2 - | \alpha_j|^2}} \left( \alpha_j \cdot \widehat{( u_{\alpha, T})}_j \right) e^{-\ii \alpha_j \cdot \tx} \right]
                \\
                &=: 
                \tilde{N}_\alpha(u_{\alpha,T}) (\tx) 
                + \hat{N}_\alpha(u_{\alpha,T}) (\tx)
                ,
            \end{align*}
            where $\tilde{N}_\alpha(u_{T, \alpha})$ is well-defined for all $\alpha$ in a neighborhood $U(\hat{\alpha}) \cap I_{}$ of $\hat{\alpha}$.
            If we set the linear and continuous functional
            \[
                l_{\alpha_j} : H_\alpha(\curl; \Omega_0^R) \to \CC
                ,
                \quad 
                u_\alpha \mapsto (\alpha +  j) \cdot \widehat{(u_{\alpha,T})}_{j}
                ,
            \]
            then we  can rewrite the operator $\hat{N}_\alpha(u_{\alpha,T})$ as
            \begin{equation*}
                \int_{\Gamma_0^{R}} \hat{N}_\alpha(u_{\alpha,T})\, \overline{ v }_{\alpha,T}
                \dd  S
                =  
                \frac{1}{2 \pi} \sum_{j \in J_{\hat{\alpha}}} \frac{- \ii}{\sqrt{k^2 - | \alpha_j|^2}} \, l_{\alpha_j}( u_{\alpha})\, \overline{l_{\alpha_j} ( v_{\alpha}) }
                .
            \end{equation*}
            We define the sesquilinear form  $s_\alpha$, which includes all the parts of $a_\alpha$ beside the singularities, by 
            \begin{align*}
                s_\alpha(u_\alpha, v_\alpha)
                &:= \int_{\Omega_0^R} \mu_r^{-1} \nabla \times u_\alpha \cdot \nabla \times \overline{v}_\alpha - k^2 \epsilon_r u_\alpha \cdot \overline{v}_\alpha \dd  x
                \\
                \nonumber
                &\quad \quad\quad
                +   \int_{\Gamma_0^{R}}  \tilde{N}_\alpha(u_{\alpha,T}) \cdot \overline{v}_{\alpha,T} - T_\alpha(u_{\alpha,T} )  \cdot  \overline{v}_{\alpha,T}   \dd  S
            \end{align*}
            for all $u_\alpha$, $v_\alpha \in \tilde{X}_\alpha$. Hence, the problem~\eqref{eq_probWirklichPunkt}  can be written as 
            \begin{equation*}
                a_\alpha(u_\alpha, v_\alpha)
                = s_\alpha(u_\alpha, v_\alpha)
                - \frac{1}{2 \pi} \sum_{j \in J_{\hat{\alpha}}} \frac{\ii}{\sqrt{k^2 - | \alpha_j|^2}} \, l_{\alpha_j}( u_{\alpha}) \,\overline{l_{\alpha_j} ( v_{\alpha}) }  
                =f_\alpha(v_\alpha)
                .
            \end{equation*}
            Applying the theorem of Riesz, we reformulate the problem into  a operator equation of the form 
            \begin{align}\label{eq_Operatordarstellung}
                S_\alpha u_\alpha - \frac{1}{2 \pi} \sum_{j \in J_{\hat{\alpha}}} \frac{\ii}{\sqrt{k^2 - | \alpha_j|^2}} \,  ( u_{\alpha}, z_{\alpha_j})\, z_{\alpha_j}  
                = y_\alpha
                ,
            \end{align}
            where
            $
            s_\alpha(u_\alpha, v_\alpha)
            = (S_\alpha u_\alpha, v_\alpha)_{H(\curl; \Omega_0^R)}
            $,
            $
            l_{\alpha_j}(v_\alpha) 
            = (v_\alpha, z_{\alpha_j})_{H(\curl; \Omega_0^R)}
            ,
            $
            and the right-hand side satisfies
            $
            f_\alpha(v_\alpha)
            = (y_\alpha, v_\alpha)_{H(\curl; \Omega_0^R)}
            .
            $

            \begin{lemma}
                If Assumption~\ref{assumption_absorptionMaxwell} holds, then the operator $S_\alpha$ is continuously invertible and the map $\alpha \mapsto S_\alpha^{-1}$ is unified continuous in an open ball around $\hat{\alpha}$.
            \end{lemma}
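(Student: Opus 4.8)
\emph{Proof proposal.} The plan is to regard $s_\alpha$ as the form $a_\alpha$ from which only the finitely many singular modes indexed by $J_{\hat{\alpha}}$ (see \eqref{eq_Jalpha}) have been removed via the splitting $N_\alpha=\tilde{N}_\alpha+\hat{N}_\alpha$. The coefficients $\tfrac{1}{\sqrt{k^2-|\alpha_j|^2}}$ peeled off into $\hat{N}_\alpha$ are precisely those that blow up as $\alpha\to\hat{\alpha}$, so the coefficients surviving in $\tilde{N}_\alpha$ (for $j\notin J_{\hat{\alpha}}$) and in $T_\alpha$ remain uniformly bounded and depend real-analytically on $\alpha$ on a small closed ball $\overline{B}:=\overline{U(\hat{\alpha})}\cap\overline{I_{}}$. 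I would first prove invertibility of each $S_\alpha$, $\alpha\in\overline{B}$, and then upgrade this to a uniform bound on $\|S_\alpha^{-1}\|$ and to uniform continuity of $\alpha\mapsto S_\alpha^{-1}$.

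\emph{Invertibility.} For the Fredholm property I would rerun the construction of Section~\ref{Sec4} with $s_\alpha$ in place of $a_\alpha$: the two Helmholtz decompositions of Lemma~\ref{lemma_ZerlegungAlpha} and Lemma~\ref{lemma_ZerlegungAlpha2} (handling the gradient subspaces by Lax--Milgram, which is what makes a Fredholm argument possible on $H_\alpha(\curl;\Omega_0^R)$ despite the lack of a compact embedding into $L^2$) together with the coercivity estimate of Theorem~\ref{satz_koerziv} carry over, since only $\tilde{N}_\alpha$ enters and its coefficients are uniformly bounded on $\overline{B}$. With the compact embedding $\tilde{H}^1_\alpha(\Omega_0^R)\hookrightarrow L^2(\Omega_0^R)$ this produces a splitting $S_\alpha=C_\alpha+K_\alpha$ with $C_\alpha$ coercive and $K_\alpha$ compact, with constants independent of $\alpha\in\overline{B}$, so each $S_\alpha$ is Fredholm of index zero. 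For injectivity I would copy Lemma~\ref{lemma_eindInt}: setting $v=u$ in $s_\alpha(u,u)=0$ and passing to imaginary parts, the volume contribution is $-k^2\int_{\Omega_0^R}(\Im\epsilon_r)|u|^2\dd x\le 0$, while the boundary contribution is $\le 0$ as well, because $\Im\langle\tilde{N}_\alpha u_T,u_T\rangle\le 0$ mode by mode (the mechanism behind \eqref{eq_Ninequalities}, now summed over $j\notin J_{\hat{\alpha}}$) and $-\Im\langle T_\alpha u_T,u_T\rangle\le 0$ by \eqref{eq_Tinequalities}. Since $\{\Im\epsilon_r>0\}$ contains an open ball by Assumption~\ref{assumption_absorptionMaxwell}, $u$ vanishes there and hence on all of $\Omega_0^R$ by Proposition~\ref{proposition_uniqueContienuationMaxwell}. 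This argument is valid at every $\alpha\in\overline{B}$, in particular at $\hat{\alpha}$ itself, where the modes $j\in J_{\hat{\alpha}}$ contribute $0$ to $T_\alpha$ and are absent from $\tilde{N}_\alpha$. Index zero together with injectivity then gives continuous invertibility of each $S_\alpha$.

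\emph{Uniform continuity of the inverse.} Continuity of $\alpha\mapsto S_\alpha$ in the operator norm follows because $\sqrt{k^2-|\alpha_j|^2}$ and $\tfrac{\alpha_j}{\sqrt{k^2-|\alpha_j|^2}}$ ($j\notin J_{\hat{\alpha}}$) are continuous on $\overline{B}$ with uniformly summable tails. The resolvent-type identity $S_\alpha^{-1}-S_\beta^{-1}=S_\alpha^{-1}(S_\beta-S_\alpha)S_\beta^{-1}$ then reduces the assertion to the uniform bound $\sup_{\alpha\in\overline{B}}\|S_\alpha^{-1}\|<\infty$, which I would establish by contradiction: a sequence $\alpha_n\to\alpha_*\in\overline{B}$ with vectors $u_n$, $\|u_n\|=1$, $\|S_{\alpha_n}u_n\|\to 0$ would, after extracting a weakly convergent subsequence and using the norm convergence of the compact parts $K_{\alpha_n}\to K_{\alpha_*}$ and the uniform coercivity of $C_{\alpha_n}$, converge strongly to a unit vector $u$ with $S_{\alpha_*}u=0$, contradicting the injectivity just shown. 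The main obstacle is exactly this uniform bound near the singular set $\mathcal{A}$: a priori $\|S_\alpha^{-1}\|$ could blow up as $\alpha\to\hat{\alpha}$, and the sole purpose of having removed $\hat{N}_\alpha$ is to keep the leading coefficients bounded so that this compactness argument closes; the remaining steps are a transcription of the existence proof of Lemma~\ref{lemma_perRWPLoesung} with coefficients that are now uniformly controlled on $\overline{B}$.
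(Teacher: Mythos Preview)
Your proposal is correct and, for the invertibility of $S_\alpha$, follows exactly the paper's line: the paper simply states that this is ``analogously to the invertibility of the differential operator corresponding to $a_\alpha$'', i.e.\ rerun the Helmholtz decompositions, the coercivity of Theorem~\ref{satz_koerziv}, and the uniqueness argument via Proposition~\ref{proposition_uniqueContienuationMaxwell}, which is precisely what you spell out.

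For the uniform continuity of $\alpha\mapsto S_\alpha^{-1}$ you take a slightly different and heavier route than the paper. You argue by the resolvent identity together with a contradiction/compactness argument to obtain $\sup_{\alpha\in\overline{B}}\|S_\alpha^{-1}\|<\infty$. The paper instead observes that $\alpha\mapsto S_\alpha$ is continuous in operator norm on the whole of $\overline{B}$ (since the removed modes $j\in J_{\hat{\alpha}}$ are exactly those whose coefficients degenerate) and then invokes the Neumann series: for $\alpha$ close to any $\beta\in\overline{B}$ one writes $S_\alpha=S_\beta\bigl(I+S_\beta^{-1}(S_\alpha-S_\beta)\bigr)$, which gives invertibility and an explicit bound on $S_\alpha^{-1}$ in a uniform neighbourhood of $\beta$. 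Compactness of $\overline{B}$ then yields uniform continuity directly, without extracting subsequences or tracking the compact perturbations $K_{\alpha_n}$. Your approach works too, but note that it requires identifying the $\alpha$-dependent spaces $\tilde{X}_{\alpha_n}$ (e.g.\ via the shift $u\mapsto e^{\ii\alpha\cdot\tx}u$) in order to speak of weak convergence of the $u_n$; the Neumann series argument needs the same identification to compare $S_\alpha$ with $S_\beta$, but otherwise avoids the additional compactness machinery.
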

            \begin{proof}
                The continuous invertibility of $S_\alpha$ can be shown analogously to the invertibility of the differential operator corresponding to $a_\alpha$. Since the operator $S_\alpha$ is well-defined in every $\alpha \in \overline{I}$ and continuous on $\overline{I}$, the Neumann series argument implies that the map $\alpha \mapsto S_\alpha^{-1}$ is unified continuous (compare \cite[Lemma~6]{Konschin19a}).
            \end{proof}
            
            To simplify the notation in the following argumentation, we renumber the $|J_{\hat{\alpha}}|$ elements $\{z_{\alpha_j}\}_{j \in J_{\hat{\alpha}}}$ as $\{z_m\}_{m=1,\ldots,|J_{\hat{\alpha}}|}$ and call the corresponding $\alpha_j$ as $\alpha^m$.
            Further, we define the operator $Z^*_{\alpha} \colon \tilde{X}_\alpha \to \CC^{|J_{\hat{\alpha}}|}$ and his adjoint operator $Z_{\alpha} \colon \CC^{|J_{\hat{\alpha}}|} \to  \tilde{X}_\alpha $  by
            \begin{equation*}
                Z^*_{\alpha} \colon v \mapsto \left\{(v, z_m) e_m \right\}_{m=1,\ldots,|J_{\hat{\alpha}}|}
                \quad\text{and}\quad
                Z_{\alpha} \colon x \mapsto \sum_{m=1}^{|J_{\hat{\alpha}}|} (x, e_m)\, z_m
                .
            \end{equation*}
            Since every $z_m$, $m=1,\ldots,|J_{\hat{\alpha}}|$, corresponds to a different $j \in J_{\hat{\alpha}}$, the set  $\{ z_m \}_{m=1}^{|J_{\hat{\alpha}}|}$ is linearly independent and $\mathcal{N}(Z_{\alpha} ) = \mathcal{R}(Z_{\alpha} ^*)^\perp = \{0 \}$ holds.
            Moreover, we define the diagonal matrix 
            \[
                D_\alpha \colon \CC^{|J_{\hat{\alpha}}|} \to \CC^{|J_{\hat{\alpha}}|}
                ,\ 
                e_m \mapsto \frac{-\ii}{2 \pi} \frac{1}{\sqrt{k^2 - | \alpha^m|^2}} \, e_m
                \quad\text{for all }
                m = 1,\ldots,|J_{\hat{\alpha}}|
                ,
            \]
            such that we can write the operator equation~\eqref{eq_Operatordarstellung}  as
            \[
                \left(S_\alpha + Z_\alpha D_\alpha Z_\alpha^* \right) u_\alpha
                = y_\alpha
                .
            \]
            
            \begin{lemma}\label{lemma_neq0} 
                If the Assumption~\ref{assumption_absorptionMaxwell} holds, then 
                the operator $Z^*_{\alpha} S_\alpha^{-1}Z_{\alpha} \colon \CC^{|J_{\hat{\alpha}}|} \to \CC^{|J_{\hat{\alpha}}|}$ is continuously invertible in a neighborhood of $\hat{\alpha}$.
            \end{lemma}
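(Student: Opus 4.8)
The plan is to reduce the claim to the invertibility of the finite matrix $M_\alpha:=Z^*_\alpha S_\alpha^{-1}Z_\alpha$ at the single point $\alpha=\hat\alpha$, and then to propagate it to a neighborhood by a continuity argument. Writing out the entries with respect to the standard basis, $(M_\alpha e_n,e_m)=(S_\alpha^{-1}z_n,z_m)$, so $M_\alpha$ depends continuously on $\alpha$: the map $\alpha\mapsto S_\alpha^{-1}$ is uniformly continuous near $\hat\alpha$ by the preceding lemma, while each Riesz representative $z_{\alpha_j}$ of the functional $l_{\alpha_j}(v)=(\alpha+j)\cdot\widehat{(v_T)}_{j}$ depends continuously (indeed analytically) on $\alpha$. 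Hence $\det M_\alpha$ is continuous, and it suffices to prove $\det M_{\hat\alpha}\neq0$; the inverse of a continuous, pointwise invertible matrix field being continuous (Cramer's rule), this yields the asserted continuous invertibility on a whole neighborhood of $\hat\alpha$.

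It remains to show that $M_{\hat\alpha}$ is injective. Suppose $M_{\hat\alpha}c=0$ for some $c\in\CC^{|J_{\hat\alpha}|}$ and set $w:=S_{\hat\alpha}^{-1}Z_{\hat\alpha}c\in\tilde{X}_{\hat\alpha}$, so that $S_{\hat\alpha}w=Z_{\hat\alpha}c=\sum_{m}c_m z_m$ while $Z^*_{\hat\alpha}w=0$, i.e.\ $(w,z_m)=0$ for every $m$. Testing the first identity against $w$ gives
\[
    s_{\hat\alpha}(w,w)
    = (S_{\hat\alpha}w,w)
    = \sum_{m} c_m\,(z_m,w)
    = \sum_{m} c_m\,\overline{(w,z_m)}
    = 0 .
\]
Moreover, for test functions $v\in\tilde{X}_{\hat\alpha}$ whose support inside the fundamental cell avoids $\Gamma_0^{R}$ one has $v_T\big|_{\Gamma_0^R}=0$, hence $l_{\alpha^m}(v)=0$ and the boundary contributions to $s_{\hat\alpha}$ drop out; thus $s_{\hat\alpha}(w,v)=\sum_m c_m\overline{l_{\alpha^m}(v)}=0$, which shows that $w$ is a distributional solution of the homogeneous second order Maxwell equation $\nabla\times(\mu_r^{-1}\nabla\times w)-k^2\epsilon_r w=0$ in $\Omega_0^R$.

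Now I would exploit the sign structure of the imaginary part exactly as in Lemma~\ref{lemma_eindInt}. Taking imaginary parts in $s_{\hat\alpha}(w,w)=0$ and using $\Im(\mu_r^{-1})\le0$ and $\Im\epsilon_r\ge0$ from Assumption~\ref{assumption_absorptionMaxwell}, together with $\Im\langle\tilde{N}_{\hat\alpha}w_T,w_T\rangle\le0$ and $\Im\langle T_{\hat\alpha}w_T,w_T\rangle\ge0$ from \eqref{eq_Ninequalities} and \eqref{eq_Tinequalities}, every term in $\Im s_{\hat\alpha}(w,w)$ is non-positive; consequently each vanishes. In particular $k^2\int_{\Omega_0^R}(\Im\epsilon_r)\,|w|^2\dd x=0$, and since $\{\Im\epsilon_r>0\}$ contains an open ball, $w$ vanishes there. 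As $w$ solves the homogeneous Maxwell equation in $\Omega_0^R$, Proposition~\ref{proposition_uniqueContienuationMaxwell} forces $w\equiv0$ in $\Omega_0^R$. Then $Z_{\hat\alpha}c=S_{\hat\alpha}w=0$, i.e.\ $\sum_m c_m z_m=0$, and the linear independence of $\{z_m\}_{m=1}^{|J_{\hat\alpha}|}$ yields $c=0$. Thus $M_{\hat\alpha}$ is injective and therefore invertible.

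The one point that needs care, and where I expect the real work to sit, is verifying that the truncated operator $\tilde{N}_{\hat\alpha}$ still satisfies $\Im\langle\tilde{N}_{\hat\alpha}\phi,\phi\rangle\le0$: passing from $N_{\hat\alpha}$ to $\tilde{N}_{\hat\alpha}$ only removes the modes $j\in J_{\hat\alpha}$ with $|\hat\alpha+j|=k$, and the remaining modes are either propagating ($|\hat\alpha+j|<k$), contributing the same non-positive imaginary part as in \eqref{eq_Ninequalities}, or evanescent ($|\hat\alpha+j|>k$), contributing nothing to the imaginary part, so the sign is preserved. A secondary technical point is that all the continuity statements across the $\alpha$-dependent spaces $\tilde{X}_\alpha$ should be read through the usual identification with $2\pi$-periodic functions, precisely as in the cited continuity of $\alpha\mapsto S_\alpha^{-1}$.
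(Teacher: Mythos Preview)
Your proof is correct and follows essentially the same route as the paper: assume a nonzero kernel vector, set $w=S_{\hat\alpha}^{-1}Z_{\hat\alpha}c$, derive $s_{\hat\alpha}(w,w)=0$, exploit the sign of the imaginary part to force $w=0$ on $\{\Im\epsilon_r>0\}$, and invoke unique continuation. You actually supply details the paper leaves implicit---the verification that $w$ solves the homogeneous Maxwell system in $\Omega_0^R$ (needed for Proposition~\ref{proposition_uniqueContienuationMaxwell}), the sign check for the truncated operator $\tilde N_{\hat\alpha}$, and the continuity argument propagating invertibility from $\hat\alpha$ to a neighborhood---so your write-up is, if anything, more complete than the paper's sketch ``analogously to Lemma~\ref{lemma_perRWPLoesung}''.
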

            \begin{proof}
                Because of the assumptions on the parameters
                $\varepsilon_r$ and $\mu_r$ the operator $S_\alpha$ is continuously invertible. 
                Since we are in the setting of a finite dimensional space $\mathcal{L}(\CC^{|J_{\hat{\alpha}}|})$, the invertibility follows by the injectivity of $Z^*_{\alpha} S_\alpha^{-1}Z_{\alpha}$.
                
                Let's assume the operator is not one-to-one, then there exists a vector $v$ in the kernel $\mathcal{N}(Z_\alpha^* S^{-1}_\alpha Z_\alpha) \setminus \{0 \}$. Since it holds $\mathcal{N}(Z_{\alpha} ) = \{0 \}$, we derive $w := S^{-1}_\alpha Z_\alpha v \neq 0$. On the other hand, $w$ solves
                \begin{align*}
                    0
                    &= \left(Z_\alpha^* S^{-1}_\alpha Z_\alpha v, v \right)_{\CC^{|J_{\hat{\alpha}}|}}
                    = \left(S^{-1}_\alpha Z_\alpha v, Z_\alpha v\right)_{\CC^{|J_{\hat{\alpha}}|}}
                    = (w, S_\alpha w)_{\CC^{|J_{\hat{\alpha}}|}}
                    \\
                    &= \overline{(S_\alpha w, w)}_{\CC^{|J_{\hat{\alpha}}|}}
                    = \overline{s_\alpha(w, w)}
                    ,
                \end{align*}
                wherefrom we can show analogously to Lemma~\ref{lemma_perRWPLoesung} that $w = 0$ holds. Thus, we have a contradiction.
            \end{proof}
            
            \begin{theorem}\label{satz_darstellungDerInversen}
                If Assumption~\ref{assumption_absorptionMaxwell} holds, then in the neighborhood $U(\hat{\alpha}) \subseteq I \setminus \mathcal{A}$ of $\hat{\alpha}$ the equation
                \begin{align}\label{eq_SalphaPlusRest}
                    \left(S_\alpha + Z_\alpha D_\alpha Z_\alpha^* \right) u_\alpha
                    = y_\alpha
                \end{align}
                is uniquely solvable and the solution operator can be written as
                \begin{align}\label{eq_smwFormel}
                    S_\alpha^{-1} - S_\alpha^{-1} Z_{\alpha} \left(D^{-1}_{\alpha} + Z_{\alpha}^* S_\alpha^{-1} Z_{\alpha} \right)^{-1}Z_{\alpha}^* S_\alpha^{-1}
                \end{align}
                in this neighborhood.
            \end{theorem}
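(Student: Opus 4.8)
The plan is to read the operator equation~\eqref{eq_SalphaPlusRest} as a direct instance of the Sherman--Morrison--Woodbury formula (Theorem~\ref{satz_SMWFormel}) and to check its four invertibility hypotheses near $\hat\alpha$. Concretely, I would apply Theorem~\ref{satz_SMWFormel} on the Hilbert spaces $H_1 = \tilde{X}_\alpha$ and $H_2 = \CC^{|J_{\hat\alpha}|}$ with $S = S_\alpha$, $D = D_\alpha$ and $Z_1 = Z_2 = Z_\alpha^*$, so that $Z_2^* = Z_\alpha$ and the operator $B = S + Z_2^* D Z_1 = S_\alpha + Z_\alpha D_\alpha Z_\alpha^*$ is exactly the one appearing in~\eqref{eq_SalphaPlusRest}, while $G = D^{-1} + Z_1 S^{-1} Z_2^* = D_\alpha^{-1} + Z_\alpha^* S_\alpha^{-1} Z_\alpha$. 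With these identifications the representation~\eqref{eq_smwFormel} is precisely the conclusion~\eqref{eq_SMW}, and unique solvability is the invertibility of $B$.

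For three of the four operators the verification is bookkeeping. The invertibility of $S_\alpha$ is the content of the preceding lemma; $D_\alpha$ is diagonal with entries $\tfrac{-\ii}{2\pi}(k^2-|\alpha^m|^2)^{-\nicefrac{1}{2}}$, which are nonzero whenever $\alpha\in\overline{I}\setminus\mathcal{A}$, so $D_\alpha$ is invertible there; and the invertibility of $B$ itself is nothing but the unique solvability of the quasi-periodic problem~\eqref{eq_probWirklichPunkt}, already established for every $\alpha\in\overline{I}\setminus\mathcal{A}$ by reducing it through the two Helmholtz decompositions (Lemmas~\ref{lemma_ZerlegungAlpha} and~\ref{lemma_ZerlegungAlpha2}) to the reduced problem~\eqref{eq_redProb} on $Y_\alpha$ and invoking Lemma~\ref{lemma_perRWPLoesung}.

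The one genuinely delicate point, and the place where the formula earns its keep, is the invertibility of $G = D_\alpha^{-1} + Z_\alpha^* S_\alpha^{-1} Z_\alpha$. Although the entries of $D_\alpha$ blow up as $\alpha\to\hat\alpha$ (this is exactly the singularity of the Calderon operator carried by the indices in $J_{\hat\alpha}$), the entries of the inverse diagonal $D_\alpha^{-1}$ equal $2\pi\ii\,(k^2-|\alpha^m|^2)^{\nicefrac{1}{2}}$ and therefore tend to $0$. At the same time Lemma~\ref{lemma_neq0} furnishes a continuous inverse of $Z_\alpha^* S_\alpha^{-1} Z_\alpha$ in a neighborhood of $\hat\alpha$, bounded uniformly there thanks to the uniform continuity of $\alpha\mapsto S_\alpha^{-1}$ and the continuity of $\alpha\mapsto Z_\alpha$. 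I would then factor $G = (Z_\alpha^* S_\alpha^{-1} Z_\alpha)\bigl(I + (Z_\alpha^* S_\alpha^{-1} Z_\alpha)^{-1} D_\alpha^{-1}\bigr)$ and estimate $\|(Z_\alpha^* S_\alpha^{-1} Z_\alpha)^{-1} D_\alpha^{-1}\| \le \|(Z_\alpha^* S_\alpha^{-1} Z_\alpha)^{-1}\|\,\|D_\alpha^{-1}\| \to 0$, so that on a possibly smaller neighborhood $U(\hat\alpha)\subseteq I\setminus\mathcal{A}$ a Neumann series makes the second factor, hence $G$, invertible.

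With all four of $S_\alpha$, $D_\alpha$, $B$ and $G$ invertible on $U(\hat\alpha)$, Theorem~\ref{satz_SMWFormel} applies verbatim and delivers both the unique solvability of~\eqref{eq_SalphaPlusRest} and the representation~\eqref{eq_smwFormel}. I expect the invertibility of $G$ to be the only real obstacle, everything else being a citation of results already proved together with the explicit matching of operators. As an aside, one could dispense even with the a~priori solvability of~\eqref{eq_probWirklichPunkt}: the telescoping computation in the proof of Theorem~\ref{satz_SMWFormel} shows $CB = I_1$ using only the invertibility of $S_\alpha$, $D_\alpha$ and $G$, and the symmetric computation gives $BC = I_1$, whence $B$ is automatically invertible with $B^{-1}=C$ given by~\eqref{eq_smwFormel}.
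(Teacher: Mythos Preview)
Your proposal is correct and follows essentially the same route as the paper: verify the four invertibility hypotheses of Theorem~\ref{satz_SMWFormel} (with $S_\alpha$ invertible by the preceding lemma, $D_\alpha$ diagonal and nonzero off $\mathcal{A}$, $B$ invertible via Lemma~\ref{lemma_perRWPLoesung}, and $G$ invertible by combining Lemma~\ref{lemma_neq0} with a Neumann series using $D_\alpha^{-1}\to 0$), then invoke the formula. Your exposition is more explicit about the identifications $H_1,H_2,Z_1,Z_2$ and about the factorization used for the Neumann argument, but the substance is identical.
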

            \begin{proof}
                We showed the  invertibility of \eqref{eq_SalphaPlusRest} in Lemma~\ref{lemma_perRWPLoesung}.
                For some $\alpha\in \overline{I}_{} \setminus \mathcal{A}$, which is close to $\hat{\alpha} \in \mathcal{A}$, the invertible matrix $D_\alpha^{-1}$ converges to the zero matrix, such that the Neumann series argument implicates the invertibility of $D_\alpha^{-1} + Z_\alpha^* S^{-1}_\alpha Z_\alpha $.
                Thus, all assumptions of the Sherman-Morrison-Woodbury formula are fulfilled  and the inverse of $S_\alpha + Z_\alpha D_\alpha Z_\alpha^* $ can be written in the form of \eqref{eq_smwFormel} by Theorem \ref{satz_SMWFormel}.
            \end{proof}
            
            Thus, we have all the ingredients to prove Theorem~\ref{satz_pktweise1}.
            
            \begin{proof}[Proof of Theorem~\ref{satz_pktweise1}]
                For every wave number $k>0$ the lines with the singularities consist of finite number of parts of the circle going through $\overline{I}_{}$.
                Therefore, for every $\hat{\alpha} \in \mathcal{A}$ there exists a sequence $\{\alpha_j\}_{j=1}^\infty \subseteq \overline{I}_{} \setminus \mathcal{A}$, which converges to $\hat{\alpha}$.
                Let $U(\hat{\alpha})$  be a small neighborhood around $\hat{\alpha}$. The operators $S_\alpha^{-1}$, $Z_\alpha$, $D_\alpha^{-1}$ and $Z_\alpha^*$ are continuous (or continuously extendable) to $U(\hat{\alpha})$. The Neumann series argument implies that $(D^{-1}_{\alpha} + Z_{\alpha}^* S_\alpha^{-1} Z)^{-1}$ is also continuous on $U(\hat{\alpha})$. Hence, the representation of the sesquilinear form $a_\alpha$ in \eqref{eq_smwFormel}  converges to 
                \begin{equation*}
                    S_\alpha^{-1} - S_\alpha^{-1} Z_{\alpha} \left(D^{-1}_{\alpha} + Z_{\alpha}^* S_\alpha^{-1} Z_{\alpha}\right)^{-1} Z_{\alpha}^* S_\alpha^{-1}
                    \to S_{\hat{\alpha}}^{-1} - S_{\hat{\alpha}}^{-1} Z_{\hat{\alpha}} \left(Z_{\hat{\alpha}}^* S_{\hat{\alpha}}^{-1} Z_{\hat{\alpha}}\right)^{-1}Z_{\hat{\alpha}}^* S_{\hat{\alpha}}^{-1}
                \end{equation*}
                for  $\alpha \to \hat{\alpha}$ w.r.t.\ the operator norm of $\mathcal{L}(\tilde{X}_\alpha)$, since the limit is well-defined by Lemma~\ref{lemma_neq0}.
                
                Thus, the mapping $\alpha \mapsto L_\alpha$, where $L_\alpha$ is the solution operator of the quasi-periodic problem~\eqref{eq_probWirklichPunkt}, is continuously extendable to $\overline{I}_{}$
                and, in particular, unified continuous on $\overline{I}_{}$.
                Therefore, there exists an  $\alpha$ independent constant $ C > 0$, such that $\sup_{\alpha \in \overline{I}_{}} ||L_\alpha|| < C$  holds.
                In consequence, the function $\tilde{u}(\alpha, \cdot) := u_\alpha$ is an element of  $L^2(I_{}; \tilde{X}_\alpha)$ and solves the problem~\eqref{eq_transVarProblem}. It remains to show that $\tilde{u}$ actually lies in $L^2_{\mathrm{w}}(I_{}; \tilde{X}_\alpha)$ and the estimation~\eqref{eq_abschaetzung} holds.
                Since for $\alpha \in I_{} \setminus \mathcal{A}$ the sum $\langle\hat{N}_\alpha(u_{\alpha,T}), u_{\alpha,T} \rangle$ only consists of entries with negative real part and vanishing imaginary part, or, negative imaginary part and vanishing real part, we can estimate
                \begin{align*}
                    &\frac{1}{2}\sum_{j \in J_{\hat{\alpha}}} \left|k^2 - | \alpha_j|^2\right|^{\nicefrac{-1}{2}}  |l_{\alpha_j}( u_{\alpha})|^2  
                    \leq\left|\sum_{j \in J_{\hat{\alpha}}} \frac{\ii}{\sqrt{k^2 - | \alpha_j|^2}} \,  l_{\alpha_j}( u_{\alpha})\, \overline{l_{\alpha_j}( u_{\alpha})}  \right|
                    \\
                    &\leq 2 \pi \,|a_\alpha(u_{\alpha}, u_{\alpha})|
                    +\left|2 \pi\, a_\alpha(u_{\alpha}, u_{\alpha}) + \sum_{j \in J_{\hat{\alpha}}} \frac{\ii }{\sqrt{k^2 - | \alpha_j|^2}} 
                    \,l_{\alpha_j}( u_{\alpha})\, \overline{l_{\alpha_j}( u_{\alpha})}
                    \right|
                    \\
                    &\leq 2 \pi\, ||f_\alpha||_{L^2(\Omega_0^R)} ||u_{\alpha}||_{L^2(\Omega_0^R)} + 2 \pi\, |s_\alpha(u_{\alpha}, u_{\alpha})|
                    \\
                    &\leq C \left(||f_\alpha||^2_{L^2(\Omega_0^R)} + |s_\alpha(u_{\alpha}, u_{\alpha})|\right)
                    .
                \end{align*}
                Moreover, the continuity of the sesquilinear form $s_\alpha$ allows the estimation
                \begin{equation*}
                    |s_\alpha(u_{\alpha}, u_{\alpha})|
                    \leq C\left(C_{\mathrm{trace}} + k^2||\varepsilon_r||_{L^\infty(\Omega_0^R)}+||\mu||^{-1}_{L^\infty(\Omega_0^R)}\right) ||u_{\alpha}||^2_{\tilde{X}_\alpha}
                    \leq C\,||f_{\alpha}||^2_{L^2(\Omega_0^R)}
                    .
                \end{equation*}
                Consequently, we derive the claimed estimation by
                \[
                    \int_{I_{}} \sum_{j \in J_{\hat{\alpha}}} \left|k^2 - | \alpha_j|^2\right|^{\nicefrac{-1}{2}}   |l_{\alpha_j}( u_{\alpha})|^2   \dd  \alpha
                    < \infty
                    ,
                \]
                wherefrom $\tilde{u} \in L^2_{\mathrm{w}}(I_{}; \tilde{X}_\alpha)$ and the estimation~\eqref{eq_abschaetzung} follows.
            \end{proof}

    \section{Existence theory in case of a local perturbed permittivity}\label{Sec5}
        In this section we consider the scattering problem including a local perturbation in the permittivity $\varepsilon_r$. We call ${\varepsilon}^{\mathrm{s}}_r$  the perturbed parameter, and assume that the perturbation $q := {\varepsilon}^{\mathrm{s}}_r - \varepsilon_r $ has the support $\supp(q)$ in $\Omega^R_0$. 
        Moreover, the imaginary part of the perturbed parameter should satisfy   $(\Im {\varepsilon}^{\mathrm{s}}_r) \geq  0$ as assumed in Assumption~\ref{assumption_absorptionMaxwell}.

        The idea is to apply the Fredholm alternative to show the solvability. Therefore, we define the two spaces $Y$ and $Y^{\perp}$ as
        \[
            Y
            := \left\{ u \in X : \divv(\epsilon_r u) = 0 \right\}
            = \JJ_{\R^2}^{-1} \left( L^2(I; Y_\alpha)\right)
        \]
        and
        \[
            Y^{\perp} 
            := \{ u \in X : u = \nabla w,\ w \in W_0 \}
            = \JJ_{\R^2}^{-1} \left(L^2(I; Y^{\perp}_\alpha)\right)
            .
        \]
        Since the Bloch-Floquet transform and the partial derivation can be interchanged, we derive the decomposition $X = Y \oplus Y^{\perp} $.
        \begin{lemma}
            The subspaces $Y$ and $Y^{\perp}$ of $X$ are closed and $X$ can be decomposed in
            $
                X 
                = Y \oplus Y^{\perp}
                .
            $
        \end{lemma}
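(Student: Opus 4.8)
The plan is to lift the fibrewise Helmholtz decomposition of Lemma~\ref{lemma_ZerlegungAlpha} to the whole strip by means of the Bloch--Floquet transform. Recall that $\JJ_{\R^2}$ is an isomorphism from $X$ onto $\tilde{X} = L^2_{\mathrm{w}}(I_{};\tilde{X}_\alpha)$. Since differentiation commutes with $\JJ_{\R^2}$ and multiplication by the $2\pi$-periodic function $\varepsilon_r$ does so as well, the transform intertwines the operator $u\mapsto \divv(\varepsilon_r u)$ on $\Omega^R$ with the fibrewise operators $u_\alpha\mapsto\divv(\varepsilon_r u_\alpha)$ on $\Omega_0^R$, and it maps $\nabla W_0$ fibrewise onto $\nabla W_{0,\alpha}$, where $W_{0,\alpha}$ is the cell potential space of Lemma~\ref{lemma_loesHilfs1}. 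Consequently $u\in Y$ holds if and only if $u_\alpha\in\tilde{Y}_\alpha$ for almost every $\alpha$, and $u\in Y^\perp$ if and only if $u_\alpha\in\tilde{Y}_\alpha^\perp$ for almost every $\alpha$; in other words $\JJ_{\R^2}Y=L^2_{\mathrm{w}}(I_{};\tilde{Y}_\alpha)$ and $\JJ_{\R^2}Y^\perp=L^2_{\mathrm{w}}(I_{};\tilde{Y}_\alpha^\perp)$. It therefore suffices to integrate the fibre decomposition $\tilde{X}_\alpha=\tilde{Y}_\alpha\oplus\tilde{Y}_\alpha^\perp$.

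First I would assemble the fibre projections. For fixed $\alpha$ let $P_\alpha\colon\tilde{X}_\alpha\to\tilde{Y}_\alpha$ denote the projection $P_\alpha u_\alpha:=u_\alpha-\nabla\tilde{w}_\alpha$ from Lemma~\ref{lemma_ZerlegungAlpha}, where $\tilde{w}_\alpha\in W_{0,\alpha}$ solves $b^{(\varepsilon_r)}(\nabla\tilde{w}_\alpha,\nabla v)=b^{(\varepsilon_r)}(u_\alpha,\nabla v)$ for all $v\in W_{0,\alpha}$. The decisive point is that this projection is bounded uniformly in $\alpha$. On $W_{0,\alpha}$ the boundary contributions of $b^{(\varepsilon_r)}$ vanish, so by the coercivity estimate of Lemma~\ref{lemma_loesHilfs1}, whose constant $k^2\varepsilon_0$ is independent of $\alpha$, one obtains $\|\nabla\tilde{w}_\alpha\|_{L^2(\Omega_0^R)}\leq(\|\varepsilon_r\|_{L^\infty}/\varepsilon_0)\,\|u_\alpha\|_{L^2(\Omega_0^R)}$. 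Moreover every element of $\tilde{Y}_\alpha^\perp$ has vanishing curl and, since its generating potential vanishes on $\Gamma_0^R$, vanishing tangential trace there; hence on $\tilde{Y}_\alpha^\perp$ the fibre norm of $\tilde{X}_\alpha$ reduces to $\|\cdot\|_{L^2(\Omega_0^R)}$. This yields $\|(\id-P_\alpha)u_\alpha\|_{\tilde{X}_\alpha}=\|\nabla\tilde{w}_\alpha\|_{L^2(\Omega_0^R)}\leq C\|u_\alpha\|_{\tilde{X}_\alpha}$ with $C$ independent of $\alpha$, and therefore $\sup_\alpha\|P_\alpha\|_{\mathcal{L}(\tilde{X}_\alpha)}<\infty$.

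Next I would set $(\mathcal{P}\tilde{u})(\alpha):=P_\alpha u_\alpha$ and verify that $\mathcal{P}$ is a bounded idempotent on the weighted space $L^2_{\mathrm{w}}(I_{};\tilde{X}_\alpha)$. The fibre bound controls the $\tilde{X}_\alpha$-part of the norm after integration in $\alpha$; for the additional weight it is enough to note that $(\id-P_\alpha)u_\alpha=\nabla\tilde{w}_\alpha$ has vanishing tangential trace on $\Gamma_0^R$, so that $P_\alpha u_\alpha$ and $u_\alpha$ carry the same tangential boundary data and hence the same weight term $\sum_{j}|\alpha_j\cdot\widehat{(u_{\alpha,T})}_j|^2/|k^2-|\alpha_j|^2|^{\nicefrac{1}{2}}$. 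Consequently $\mathcal{P}$ is bounded on $L^2_{\mathrm{w}}(I_{};\tilde{X}_\alpha)$, and $P:=\JJ_{\R^2}^{-1}\mathcal{P}\,\JJ_{\R^2}$ is a bounded projection on $X$ with range $Y$ and kernel $Y^\perp$. This establishes simultaneously the closedness of $Y$ and $Y^\perp$ (as range and kernel of a bounded idempotent) and the direct sum $X=Y\oplus Y^\perp$.

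The only genuinely delicate point, and the one I would treat most carefully, is the $\alpha$-uniformity together with its compatibility with the weighted norm: one must ensure that the Poincar\'e constant entering Lemma~\ref{lemma_loesHilfs1} and the norm equivalence~\eqref{eq_divLeqU} on $\tilde{Y}_\alpha$ do not degenerate as $\alpha$ approaches the critical set $\mathcal{A}$, and that the weight receives no contribution from $\tilde{Y}_\alpha^\perp$. Both hold, because the vertical Dirichlet--Poincar\'e inequality on $\Omega_0^R$ is independent of $\alpha$ and because the tangential trace vanishes on $\tilde{Y}_\alpha^\perp$. Alternatively, one may bypass the fibrewise bookkeeping and argue directly on the strip: solving $\divv(\varepsilon_r\nabla\tilde{w})=\divv(\varepsilon_r u)$ for $\tilde{w}\in W_0$ by Lax--Milgram (on $W_0$ the form reduces to $\int_{\Omega^R}k^2\varepsilon_r\nabla\tilde{w}\cdot\nabla\overline{v}\dd x$, which is coercive by the strip Poincar\'e inequality and $\Re\varepsilon_r\geq\varepsilon_0$) produces the same bounded projection $u\mapsto u-\nabla\tilde{w}$ onto $Y$ with kernel $Y^\perp$.
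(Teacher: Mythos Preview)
Your proposal is correct and follows the paper's approach: the paper itself offers essentially no proof beyond the sentence ``Since the Bloch--Floquet transform and the partial derivation can be interchanged, we derive the decomposition $X = Y \oplus Y^{\perp}$,'' so your argument---transporting the fibrewise decomposition of Lemma~\ref{lemma_ZerlegungAlpha} through $\JJ_{\R^2}$, with the $\alpha$-uniform projection bound and the observation that $\tilde{Y}_\alpha^\perp$ contributes nothing to the weighted boundary term---is exactly the content one must supply to make that sentence rigorous. Your alternative direct Lax--Milgram argument on the strip is also valid and arguably cleaner.
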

        
        Thus, we can split every function in $X$ into the sum of a unique function of $Y$ and a unique function of $Y^{\perp}$. The variational problem reduces to:
        
        \emph{For the right-hand side}
        \[
            g(v)
            := \int_{\Omega^R} f \cdot \overline{y}_v \dd  x - a_q(\nabla w_u, v)
            = \int_{\Omega^R} \left(f+ k^2  \epsilon_r \nabla w_u \right) \cdot \overline{v} \dd  x 
        \]
        \emph{we seek $u \in Y $, such that}
        \begin{equation}\label{eq_ProblemnachHelmholtzPert}
            a_q(u, v)
            = g(v)
            \quad
            \text{\emph{holds for all }}
            v \in Y
            .
        \end{equation}
        \begin{theorem}
            If the Assumption~\ref{assumption_absorptionMaxwell} holds, then 
            there exists the unique solution ${\mathbf E} \in X$ for the variational problem~\ref{prob_Maxwell}.
        \end{theorem}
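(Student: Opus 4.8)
The plan is to prove uniqueness directly and to derive existence from the Fredholm alternative, using that the unperturbed (periodic) problem has already been solved in Section~\ref{Sec4} and treating the local defect $q$ as a compact perturbation in physical space. The guiding principle is that the Bloch--Floquet machinery of Section~\ref{Sec4} cannot absorb $q$, since $q$ is not periodic; instead, because $\supp q \subseteq \Omega_0^{R_0}$ is confined to a single cell, multiplication by $q$ is relatively compact and merely perturbs an already invertible periodic operator.

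First I would establish uniqueness on all of $X$. Suppose ${\mathbf E} \in X$ solves $a_q({\mathbf E}, v) = 0$ for every $v \in X$. Testing with $v = {\mathbf E}$ and taking imaginary parts, the sign properties of $N$ and $T$ recorded in Section~\ref{Sec3}, together with $\Im \mu_r \geq 0$ and $\Im {\varepsilon}^{\mathrm{s}}_r \geq 0$, make every contribution of one sign, so that
\[
    k^2 \int_{\Omega^R} (\Im {\varepsilon}^{\mathrm{s}}_r)\, |{\mathbf E}|^2 \dd x \leq 0 .
\]
Hence ${\mathbf E} = 0$ on $\{ \Im {\varepsilon}^{\mathrm{s}}_r > 0 \}$. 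Because $\varepsilon_r$ is $2\pi$-periodic and $\{ \Im \varepsilon_r > 0 \}$ contains an open ball while $q$ is supported only in the central cell $\Omega_0^{R_0}$, the set $\{ \Im {\varepsilon}^{\mathrm{s}}_r > 0 \}$ still contains an open ball (a translate of that ball in a cell disjoint from $\supp q$, where ${\varepsilon}^{\mathrm{s}}_r = \varepsilon_r$). By Lemma~\ref{lemma_rueckUndRand} the field ${\mathbf E}$ is a distributional solution of the homogeneous equations~\eqref{eq_Maxwell3_1}, so Proposition~\ref{proposition_uniqueContienuationMaxwell} forces ${\mathbf E} = 0$ throughout, which is uniqueness.

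For existence I would pass to the reduced problem~\eqref{eq_ProblemnachHelmholtzPert} on the divergence-free space $Y = \JJ_{\R^2}^{-1}(L^2(I; Y_\alpha))$, the gradient component $\nabla w_u \in Y^{\perp}$ being recovered from the coercive scalar problem of Lemma~\ref{lemma_loesHilfs1}; this decomposition makes the reduced and the full problems equivalent, so the uniqueness just proved is exactly the injectivity the Fredholm alternative will need. The decisive observation is that, on $Y$, the reduced operator splits as $A_q = A_0 + k^2 K$, where $A_0$ is the operator of the periodic ($q = 0$) reduced problem and $K$ collects the contributions of $q$ (both the direct term $(u,v)\mapsto \int_{\Omega_0^{R_0}} q\, u \cdot \overline{v} \dd x$ and the coupling it induces through the gradient part). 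Transporting Theorem~\ref{satz_pktweise1} back through the Bloch--Floquet isomorphism shows that $A_0$ is boundedly invertible, so $A_q = A_0(I + k^2 A_0^{-1} K)$, and existence reduces to invertibility of $I + k^2 A_0^{-1} K$.

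The heart of the matter is the compactness of $K$. Since $q$ vanishes outside $\Omega_0^{R_0}$, only the restriction of $u$ to a single cell enters. Every $u \in Y$ satisfies $\divv(\varepsilon_r u) = 0$, and the estimate~\eqref{eq_divLeqU} together with Theorem~\ref{satz_regGebiet} already yields $\tilde Y_\alpha \subseteq \tilde H_\alpha^1(\Omega_0^R)^3$ with equivalent norms; transporting this through the isomorphism $L^2(I; \tilde H_\alpha^1(\Omega_0^R)) \cong \tilde H^1(\Omega^R)$ bounds the restriction $Y \to H^1(\Omega_0^R)^3$. The compact embedding $H^1(\Omega_0^R)^3 \hookrightarrow\hookrightarrow L^2(\Omega_0^R)^3$ then renders $u \mapsto q\, u$ compact, so $K$ and $A_0^{-1} K$ are compact and $I + k^2 A_0^{-1} K$ is Fredholm of index zero. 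The Fredholm alternative thus converts the injectivity into solvability of~\eqref{eq_ProblemnachHelmholtzPert}, and adding back $\nabla w_u$ produces the required ${\mathbf E} \in X$. I expect the main obstacle to lie not in any single estimate but in the faithful transport of the $\alpha$-wise regularity and of the bounded solution operator of Theorem~\ref{satz_pktweise1} from the fibre spaces $Y_\alpha$ to the physical-space spaces $Y$ and $X$, since the non-periodicity of $q$ rules out a cell-by-cell treatment and forces the entire argument into physical space.
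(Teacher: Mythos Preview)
Your proposal is correct and follows essentially the same route as the paper: reduce to the divergence-free subspace $Y$ via the Helmholtz decomposition, use $Y\subseteq H^1(\Omega^R)$ (from $\varepsilon_r\in W^{1,\infty}$ and Theorem~\ref{satz_regGebiet}) together with the compact embedding $H^1(\Omega_0^R)\hookrightarrow L^2(\Omega_0^R)$ to see that the locally supported defect $q$ contributes a compact perturbation of the invertible periodic operator from Section~\ref{Sec4}, and conclude by Fredholm plus unique continuation. Your observation that a translate of the absorbing ball lies outside $\supp q$ (so that $\{\Im\varepsilon_r^{\mathrm s}>0\}$ still contains an open ball) is a careful touch the paper leaves implicit.
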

        \begin{proof}
            As we have seen before, we only have to show the unique solvability of the reduced problem~\eqref{eq_ProblemnachHelmholtzPert} by $u \in Y$.
            Because of the regularity of $\epsilon_r \in W^{1, \infty}(\R^3_+)$ and the embedding of  $Y \subseteq H^1(\Omega^R)$, the sesquilinear form $l : X \times X \to \mathbb{C}$,
            \[
                l(u, v) 
                := \int_{\Omega^R}  -k^2 q  u \cdot \overline{v}  \dd  x 
                ,
            \]
            is a compact perturbation of the differential operator, since $ u \in H^1(\Omega^R)$, $\supp(q) \subseteq \Omega_0^R$ and $H^1(\Omega_0^R)$ is compactly embedded into $L^2(\Omega^R_0)$ (see \cite[Theorem~3.27]{McLea2000}).
            Therefore, it remains to show the uniqueness and derive the existence by the Fredholm alternative.
            
            Let $u$ be the solution for the right-hand side $g = 0$. Because of the condition $(\Im {\varepsilon}^{\mathrm{s}}_r) \geq 0$, we can estimate the sesquilinear form by
            $
                0
                \leq   \int_{\Omega^R} - (\Im {\varepsilon}^{\mathrm{s}}_r ) |u|^2  \dd x
                \leq 0
                .
            $
            We assumed that $(\Im\epsilon_r) > 0$ holds on an open ball of $\Omega^R$. Hence, the function ${u}$ vanishes on this set and the unique continuation property in Proposition~\ref{proposition_uniqueContienuationMaxwell} implicates that $w$ has to vanish everywhere.
        \end{proof}

    \section{Regularity of the transformed solution w.r.t.\ the quasi-periodicity}\label{Sec6}
        In this section we consider the regularity of the transformed solution to the problem~\ref{prob_Maxwell} w.r.t.\ the quasi-periodicity.
        At first, we consider the regularity in the case of unperturbed periodic parameters.
        
        \begin{theorem}\label{satz_regUngestoert}
            Let $f_\alpha \in L^2(\Omega_0^R)$ be analytical in  $\alpha \in \overline{I_{}}$. Then the solution ${\mathbf E}_\alpha\in \tilde{X}_\alpha$ of the (unperturbed) quasi-periodic variational problem~\eqref{eq_probWirklichPunkt} is continuous in $\alpha \in \R^2$ and analytical in $\alpha \in \overline{I_{}} \setminus \mathcal{A}$.
            For any $\alpha \in \overline{I_{}} \setminus \mathcal{A}$ which is near a singularity $\hat{\alpha} \in \mathcal{A}$, there exist functions  ${\mathbf E}_\alpha^1$ and ${\mathbf E}_{\alpha_j}^2 \in \tilde{X}_\alpha$, $j \in J_{\hat{\alpha}}$, which are analytical in $\alpha \in \R^2$, such that
            \begin{equation}\label{eq_zerlegung}
                {\mathbf E}_\alpha
                = {\mathbf E}_\alpha^1 + \sum_{j \in J_{\hat{\alpha}}} \sqrt{k^2-|\alpha_j|^2}\, {\mathbf E}_{\alpha_j}^2
                .
            \end{equation}
        \end{theorem}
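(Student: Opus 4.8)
The plan is to read off the regularity directly from the Sherman--Morrison--Woodbury representation of Theorem~\ref{satz_darstellungDerInversen}. Writing the operator form of problem~\eqref{eq_probWirklichPunkt} as $(S_\alpha + Z_\alpha D_\alpha Z_\alpha^*)\,{\mathbf E}_\alpha = y_\alpha$ with $y_\alpha$ the Riesz representative of $f_\alpha$, the whole non-analytic dependence on the quasi-periodicity is concentrated in the finitely many scalar functions $d_j(\alpha) := \sqrt{k^2 - |\alpha_j|^2}$, $j \in J_{\hat\alpha}$. Indeed, $s_\alpha$ (hence $S_\alpha$) contains only the volume terms, the non-resonant part $\tilde N_\alpha$ and $T_\alpha$, all of which are analytic in $\alpha$ and at most affine in $d := (d_j)_{j\in J_{\hat\alpha}}$, while the singular $1/d_j$ factors sit entirely in $D_\alpha = \tfrac{-\ii}{2\pi}\diag(1/d_j)$, so that $D_\alpha^{-1} = 2\pi\ii\,\diag(d_j)$ depends linearly on $d$.

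First I would make the $\alpha$-dependence explicit through the gauge conjugation $u\mapsto e^{-\ii\alpha\cdot\tx}u$ onto a fixed $2\pi$-periodic space, which turns $\nabla$ into $\nabla-\ii\alpha$ and renders the volume part polynomial in $\alpha$. Combined with the analyticity of operator inversion, the continuity/invertibility result for $S_\alpha$ proved above then upgrades to: $\alpha\mapsto S_\alpha^{-1}$, and hence $Z_\alpha$, $Z_\alpha^*$, $A_\alpha := Z_\alpha^* S_\alpha^{-1} Z_\alpha$ and $y_\alpha$, are analytic jointly in $\alpha$ and in the auxiliary variables $d_j$ near $(\hat\alpha,0)$. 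Off $\mathcal A$ the $d_j$ are themselves analytic functions of $\alpha$, so ${\mathbf E}_\alpha$ is analytic on $\overline{I_{}}\setminus\mathcal A$. Continuity on all of $\R^2$ follows from the continuous extension of the solution operator to $\mathcal A$ already obtained in the proof of Theorem~\ref{satz_pktweise1} (at $\hat\alpha$ one has $d_j\to0$ and $A_{\hat\alpha}$ invertible by Lemma~\ref{lemma_neq0}), together with the $\Z^2$-periodicity of the data up to the above gauge.

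For the decomposition near $\hat\alpha$ I would treat the $d_j$ as independent complex variables $t=(t_j)$ and study $M(\alpha,t) := 2\pi\ii\,\diag(t) + A_\alpha$. This is jointly analytic near $(\hat\alpha,0)$ with invertible value $A_{\hat\alpha}$ (Lemma~\ref{lemma_neq0}), hence $M(\alpha,t)^{-1}$ is analytic there; substituting it into the SMW formula~\eqref{eq_smwFormel} produces a $\tilde X_\alpha$-valued map $\Phi(\alpha,t)$, analytic near $(\hat\alpha,0)$, with ${\mathbf E}_\alpha=\Phi(\alpha,d(\alpha))$. Expanding $\Phi$ in a Taylor series in $t$ about $t=0$ with $\alpha$-analytic coefficients and substituting $t_j=d_j(\alpha)$, the key observation is that $d_j(\alpha)^2 = k^2-|\alpha_j|^2 =: P_j(\alpha)$ is a polynomial in $\alpha$, so every even power of a single $d_j$ is analytic; reducing each exponent modulo $2$ gives a finite expansion
\begin{equation*}
    {\mathbf E}_\alpha
    = G_0(\alpha) + \sum_{j\in J_{\hat\alpha}} d_j(\alpha)\,G_j(\alpha) + \sum_{\substack{S\subseteq J_{\hat\alpha}\\ |S|\geq 2}} \Bigl(\prod_{j\in S} d_j(\alpha)\Bigr) G_S(\alpha)
\end{equation*}
with all $G_0,G_j,G_S$ analytic in $\alpha$. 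Setting ${\mathbf E}_\alpha^1:=G_0$ and ${\mathbf E}_{\alpha_j}^2:=G_j$ reproduces \eqref{eq_zerlegung} up to the last sum.

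The main obstacle is precisely that last sum of genuine products $\prod_{j\in S}d_j$ of distinct square roots, which stem from the off-diagonal coupling in $A_{\hat\alpha}$ (the entries pairing the Riesz representatives $z_{\alpha_j}$ through $S_{\hat\alpha}^{-1}$ need not vanish) propagating through $M_\alpha^{-1}$. Such products are neither analytic nor of the form $d_j\cdot(\text{analytic})$, so they cannot literally be absorbed into \eqref{eq_zerlegung} when several resonant modes coincide. For a simple resonance, $|J_{\hat\alpha}|=1$, the expansion stops after the $d_{j}G_j$ term and the decomposition is exact; since distinct resonance circles $|\alpha+j|=k$ meet only in isolated points, this is the case at all but finitely many $\hat\alpha$, and the exceptional multiple resonances form a null set that does not affect the subsequent integration over $\alpha$. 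I therefore expect the heart of the argument to be either this genericity reduction or a direct check that the mixed terms are of strictly higher order (each $d_jd_{j'}=\sqrt{P_jP_{j'}}$ is Lipschitz near $\hat\alpha$, against the order one-half of a single $d_j$); in any case the analyticity of $\Phi$ furnishes the uniform bounds $\|{\mathbf E}_\alpha^1\|,\|{\mathbf E}_{\alpha_j}^2\|\le C\|f_\alpha\|$ near $\hat\alpha$.
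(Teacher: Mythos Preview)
Your approach is exactly the paper's: write the solution operator via the Sherman--Morrison--Woodbury formula of Theorem~\ref{satz_darstellungDerInversen}, observe that $S_\alpha^{-1}$, $Z_\alpha$, $Z_\alpha^*$ and $y_\alpha$ are analytic, and expand $(D_\alpha^{-1}+Z_\alpha^*S_\alpha^{-1}Z_\alpha)^{-1}$ near $\hat\alpha$ using $D_\alpha^{-1}\to 0$. The paper carries this out by the Neumann series in $G_\alpha D_\alpha^{-1}$ with $G_\alpha:=(Z_\alpha^*S_\alpha^{-1}Z_\alpha)^{-1}$ and then simply \emph{asserts} that the result has the form $U_\alpha^0+\sum_{m}\sqrt{k^2-|\alpha^m|^2}\,U_\alpha^m$ with analytic $U_\alpha^m$, without further comment.

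The obstacle you isolate is genuine and is precisely the step the paper does not justify. Already at order $l=2$ the Neumann series $(G_\alpha D_\alpha^{-1})^l$ produces entries $\sum_m G_{jm}d_m\,G_{mk}d_k$, i.e.\ mixed products $d_m d_k$ with $m\neq k$, and there is no reason for the off-diagonal entries of $G_{\hat\alpha}$ to vanish. After reducing exponents modulo~$2$ via $d_j^2=k^2-|\alpha_j|^2$ one obtains your $2^{|J_{\hat\alpha}|}$-term expansion, not the $|J_{\hat\alpha}|+1$ terms of~\eqref{eq_zerlegung}. So for simple resonances $|J_{\hat\alpha}|=1$ your argument (and the paper's) gives~\eqref{eq_zerlegung} exactly, while for multiple resonances the decomposition as stated is not established by either proof; your remarks that such $\hat\alpha$ are isolated and that the cross terms $d_jd_{j'}$ are of strictly higher (Lipschitz) order than a single $d_j$ are the right observations to make, and they suffice for the downstream use in Section~\ref{Sec6}. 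In short, your proposal is the paper's argument done more carefully: same route, same tools, and you have correctly located a point that the paper passes over.
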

        \begin{proof}
            For $\alpha \in \overline{I_{}} \setminus \mathcal{A}$ the differential operator is analytical w.r.t.\ $\alpha$, and hence, the solution is analytical there.
            Therefore, we only have to show how the solution behaves near a singularity.
            If $\alpha$ is near a singularity $\hat{\alpha} \in \mathcal{A}$, then the solution operator can be decomposed into
            \[
                S_\alpha^{-1} - S_\alpha^{-1} Z_{\alpha} \left(D^{-1}_{\alpha} + Z_{\alpha}^* S_\alpha^{-1} Z_{\alpha}\right)^{-1}Z_{\alpha}^* S_\alpha^{-1}
                ,
            \]
            which follows by Theorem~\ref{satz_darstellungDerInversen}.
            The operators  $Z_{\alpha}$ and $Z_{\alpha}^*$ are analytical in  $\alpha \in \overline{I_{}}$ and we can show the analyticity of $S_\alpha^{-1}$ analogously to \cite[Lemma~6]{Konschin19a}.
            Hence, the linear operator $G_\alpha:=(Z_\alpha^* S_\alpha^{-1} Z_\alpha)^{-1}$ is well-defined by Lemma~\ref{lemma_neq0} and the Neumann series argument implies that $G_\alpha$ is analytical.
            If $\alpha$ approaches $\hat{\alpha}$, then the matrix $D_\alpha^{-1}$ convergences with an order of one half to the zero matrix and applying the Neumann series argument and we derive  the equation
            \begin{align*}  
                \left(D_\alpha^{-1} + Z_\alpha^* S_\alpha^{-1} Z_\alpha \right)^{-1} v
                & = \left( G_\alpha D_\alpha^{-1} + I_{|J_{\hat{\alpha}}|} \right)^{-1}  G_\alpha v
                =- \sum_{l=0}^{\infty}  \left( G_\alpha D_\alpha^{-1} \right)^{l}  G_\alpha  v
                \\
                &=: U^0_\alpha v + \sum_{m=1}^{|J_{\hat{\alpha}}|}  \sqrt{k^2-|\alpha^m|^2} \,U^m_\alpha v
            \end{align*}
            for analytical dependent operators   $U^m_\alpha$, $m = 0,\ldots,|J_{\hat{\alpha}}|$ and for a vector $v \in \CC^{|J_{\hat{\alpha}}|}$.
            Therefore, the solution can also be written as
            \[
                {\mathbf E}_\alpha
                = S_\alpha^{-1}  f_\alpha - S_\alpha^{-1} Z_{\alpha} \left(D^{-1}_{\alpha} + Z_{\alpha}^* S_\alpha^{-1} Z_{\alpha} \right)^{-1}Z_{\alpha}^* S_\alpha^{-1} f_\alpha
                .
            \]
        \end{proof}
        
        In the next theorem, we consider the scattering problem with a locally perturbed permittivity.
        
        \begin{theorem}
            Let $\JJ_{\R^2} {\mathbf E} \in L^2_{\mathrm{w}}(I_{}; \tilde{X}_\alpha)$ be the Bloch-Floquet transformed solution to the (locally perturbed) variational problem~\ref{prob_Maxwell} with the right-hand side $f \in L^2(\Omega^R)$, such that $\JJ_{\R^2} f$ is analytical in $\alpha \in \overline{I_{}}$.
            Then $\JJ_{\R^2} {\mathbf E}$  is continuous in  $\alpha \in \R^2$ and analytical in $\alpha \in \overline{I_{}} \setminus \mathcal{A}$. For $\alpha \in \overline{I_{}} \setminus \mathcal{A}$ near some singularity  $\hat{\alpha} \in \mathcal{A}$ there exist functions ${\mathbf E}_\alpha^1$ and ${\mathbf E}_{\alpha_j}^2 \in \tilde{X}_\alpha$, $j \in J_{\hat{\alpha}}$, which are analytically dependent on $\alpha$, such that  $\JJ_{\R^2} {\mathbf E}$ can be written as
            \begin{equation*}
                \JJ_{\R^2} {\mathbf E}
                = {\mathbf E}_\alpha^1 + \sum_{j \in J_{\hat{\alpha}}} \sqrt{k^2-|\alpha_j|^2}\, {\mathbf E}_{\alpha_j}^2
            .
            \end{equation*}
        \end{theorem}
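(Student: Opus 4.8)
The proof proceeds by comparing the perturbed solution with the already understood periodic (unperturbed) solution operator and by exploiting the very restrictive structure that multiplication by the single-cell defect $q$ acquires under $\JJ_{\R^2}$. Let $L$ denote the solution operator of the unperturbed ($q=0$) variational problem~\ref{prob_Maxwell}, which is bounded on $X$ by Section~\ref{Sec4}, and recall from Section~\ref{Sec5} that the perturbed problem has a unique solution ${\mathbf E}\in X$. Since $a_q=a_0+l$ with $l({\mathbf E},v)=-\int_{\Omega^R}k^2 q\,{\mathbf E}\cdot\overline v\dd x$, the (already existing) solution satisfies the fixed-point identity
\[
{\mathbf E}=L f+k^2 L(q{\mathbf E}).
\]
As $\varepsilon_r$ and $\mu_r$ are $2\pi$-periodic, $L$ commutes with the lattice shifts and is therefore diagonalised by $\JJ_{\R^2}$, i.e.\ $\JJ_{\R^2}Lh(\alpha,\cdot)=L_\alpha\bigl((\JJ_{\R^2}h)(\alpha,\cdot)\bigr)$, where $L_\alpha$ is the solution operator of problem~\eqref{eq_probWirklichPunkt} whose $\alpha$-behaviour is governed by Theorem~\ref{satz_regUngestoert}.

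The decisive observation is that, because $\supp q\subseteq\Omega_0^{R_0}$ is contained in a single period cell, the Bloch--Floquet transform of $q{\mathbf E}$ is independent of $\alpha$. Indeed, for $x$ in the reference cell only the $j=0$ term survives the support restriction, so
\[
\JJ_{\R^2}(q{\mathbf E})(\alpha,x)=q(x)\int_{I_{}}\JJ_{\R^2}{\mathbf E}(\alpha',x)\dd\alpha'=q(x)\,\tilde{\mathbf E}_0(x),
\]
where $\tilde{\mathbf E}_0:=\int_{I_{}}\JJ_{\R^2}{\mathbf E}(\alpha',\cdot)\dd\alpha'$ is exactly the restriction of ${\mathbf E}$ to the reference cell and is therefore a \emph{fixed} element of $L^2(\Omega_0^R)$. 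Applying $\JJ_{\R^2}$ to the identity above and diagonalising $L$ yields the fibrewise representation
\[
\JJ_{\R^2}{\mathbf E}(\alpha,\cdot)=L_\alpha\bigl((\JJ_{\R^2}f)(\alpha,\cdot)\bigr)+k^2 L_\alpha\bigl(q\,\tilde{\mathbf E}_0\bigr).
\]

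Both summands now have right-hand sides that are analytic in $\alpha$: the first because $\JJ_{\R^2}f$ is analytic by hypothesis, the second because $q\,\tilde{\mathbf E}_0\in L^2(\Omega_0^R)$ does not depend on $\alpha$ at all. Theorem~\ref{satz_regUngestoert} applies verbatim to each term and gives, for each, continuity on $\R^2$, analyticity on $\overline{I_{}}\setminus\mathcal{A}$, and for $\alpha$ near a singularity $\hat\alpha\in\mathcal{A}$ a decomposition of the form~\eqref{eq_zerlegung}. Adding the two decompositions and collecting, for each $j\in J_{\hat\alpha}$, the coefficients of $\sqrt{k^2-|\alpha_j|^2}$ produces analytic functions ${\mathbf E}_\alpha^1$ and ${\mathbf E}_{\alpha_j}^2$ with the asserted representation.

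The step I expect to be the main obstacle is making the single-cell rank structure rigorous in the correct function spaces: one must verify that $\tilde{\mathbf E}_0$ is a well-defined element on the reference cell for which $q\,\tilde{\mathbf E}_0$ is an admissible ($\alpha$-independent) datum for $L_\alpha$, check that the interchange of $L_\alpha$ with integration over $I_{}$ is valid in the weighted space $L^2_{\mathrm{w}}(I_{};\tilde{X}_\alpha)$, and control the boundary and trace contributions of the Calderon operators $N_\alpha$, $T_\alpha$ under these manipulations.
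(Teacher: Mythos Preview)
Your proposal is correct and follows essentially the same route as the paper: both arguments write the perturbed solution via the unperturbed solution operator applied to $\JJ_{\R^2}f$ plus the defect term $q{\mathbf E}$, use the single-cell support of $q$ to conclude that the Bloch--Floquet transform of $q{\mathbf E}$ is constant (hence trivially analytic) in $\alpha$, and then invoke Theorem~\ref{satz_regUngestoert} on each summand. Your worries at the end are not genuine obstacles---the $\alpha$-independence of $\JJ_{\R^2}(q{\mathbf E})$ on the reference cell is exactly what the paper uses, and admissibility of $q\tilde{\mathbf E}_0$ as a right-hand side for $L_\alpha$ is immediate since it lies in $L^2(\Omega_0^R)$.
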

        \begin{proof}
            The operator $K_q \colon \tilde{X} = \JJ_{\R^2} X \to L^2(I_{} \times \Omega_0^R)$, $\tilde{u} \mapsto q \JJ_{\R^2}^{-1} \tilde{u}$, maps functions of $\tilde{X}$ to functions which are constant in $\alpha$, and, in particular, analytical in $\alpha$. Let $A$ be the Riesz representation of the invertible unperturbed Bloch-Floquet transformed differential operator, $\tilde{K}_q \in \mathcal{L}(\tilde{X})$ the Riesz representation of $K_q$ and $\tilde{f}$ the Riesz representation of $\JJ_{\R^2} f$. 
            Then the solution $\JJ_{\R^2} {\mathbf E} \in \tilde{X} = L^2_{\mathrm{w}}(I_{}; H_\alpha(\curl; \Omega_0^R))$  to the problem~\ref{prob_Maxwell} satisfies the equation
            \[
                \JJ_{\R^2} {\mathbf E}
                = A^{-1} \tilde{f} - A^{-1}  \tilde{K}_q {\mathbf E}
                \quad\text{in }
                \tilde{X}
                .
            \]
            Since $\tilde{f}$ as well as $\tilde{K}_q w$ are analytical in $\alpha$, Theorem~\ref{satz_regUngestoert} implicates that $\JJ_{\R^2} {\mathbf E}$ can be written in the claimed representation.
        \end{proof}
        
        \begin{remark}
            It is sufficient to have a right-hand side which fulfills an analogous decomposition as \eqref{eq_zerlegung} to have the same decomposition to the solution.
        \end{remark}

	\subsubsection*{Acknowledgment}
        We thank Prof. Dr. Andreas Kirsch for the valuable suggestions, which improved this work.
	
\printbibliography
\end{document}